\subjclass[2010]{Primary: 11N25, Secondary: 11N37}
\author{Paul Pollack}
\address{University of British Columbia\\ Department of Mathematics \\ 1984 Mathematics Road\\ Vancouver, British Columbia V6T 1Z2, Canada}
\address{Simon Fraser University\\Department of Mathematics\\ Burnaby, British Columbia V5A 1S6, Canada}
\email{pollack@math.ubc.ca}
\author{Lola Thompson}
\address{Dartmouth College\\Department of Mathematics\\6188 Kemeny Hall\\Hanover, New Hampshire 03755, United States}
\email{lola.thompson@dartmouth.edu}
\title{On the degrees of divisors of $T^n-1$}
\DeclareMathAlphabet{\curly}{U}{rsfs}{m}{n}
\newtheorem{thm}{Theorem}[section]
\newtheorem{prop}[thm]{Proposition}
\newtheorem{cor}[thm]{Corollary}
\newtheorem{lem}[thm]{Lemma}
\theoremstyle{definition}
\newtheorem*{rmk}{Remark}
\newtheorem*{remarks}{Remarks}
\begin{document}
\def\phi{\varphi}
\renewcommand{\labelenumi}{(\roman{enumi})}
\def\polhk#1{\setbox0=\hbox{#1}{\ooalign{\hidewidth
    \lower1.5ex\hbox{`}\hidewidth\crcr\unhbox0}}}
\newcommand{\del}{\ensuremath{\delta}}
\def\A{\curly{A}}
\def\B{\curly{B}}
\def\e{\mathrm{e}}
\def\D{\curly{D}}
\def\E{\curly{E}}
\def\F{\mathbf{F}}
\def\C{\mathbf{C}}
\def\I{\curly{I}}
\def\N{\mathbf{N}}
\def\D{\curly{D}}
\def\Q{\mathbf{Q}}
\def\O{\curly{O}}
\def\V{\curly{V}}
\def\W{\curly{W}}
\def\Z{\mathbf{Z}}
\def\p{\tilde{p}}
\def\Pp{\curly{P}}
\def\pr{\mathfrak{p}}
\def\Proj{\mathbf{P}}
\def\q{\mathfrak{q}}
\def\Ss{\curly{S}}
\def\T{\curly{T}}
\def\Nm{\mathcal{N}}
\def\cont{\mathrm{cont}}
\def\ord{\mathrm{ord}}
\def\rad{\mathrm{rad}}
\def\lcm{\mathop{\mathrm{lcm}}}
\numberwithin{equation}{section}
\begin{abstract} Fix a field $F$. In this paper, we study the sets $\D_F(n) \subset [0,n]$ defined by
\[ \D_F(n):= \{0 \leq m \leq n: T^n-1\text{ has a divisor of degree $m$ in } F[T]\}. \]
When $\D_F(n)$ consists of all integers $m$ with $0 \leq m \leq n$, so that $T^n-1$ has a divisor of every degree, we call $n$ an \emph{$F$-practical number}. The terminology here is suggested by an analogy with the \emph{practical numbers} of Srinivasan, which are numbers $n$ for which every integer $0 \leq m \leq \sigma(n)$ can be written as a sum of distinct divisors of $n$. Our first theorem states that, for any number field $F$ and any $x \geq 2$, 
\[ \#\{\text{$F$-practical $n\leq x$}\} \asymp_{F} \frac{x}{\log{x}}; \]
this extends work of the second author, who obtained this estimate when $F=\Q$.

Suppose now that $x \geq 3$, and let $m$ be a natural number in $[3,x]$. We ask: For how many $n \leq x$ does $m$ belong to $\D_F(n)$? We prove upper bounds in this problem for both $F=\Q$ and $F=\F_p$ (with $p$ prime), the latter conditional on the Generalized Riemann Hypothesis. In both cases, we find that the number of such $n \leq x$ is $\ll_{F} x/(\log{m})^{2/35}$, uniformly in $m$.
\end{abstract}
\maketitle

\section{Introduction}
Let $F$ be a field. In this paper, we study the sets of nonnegative integers which appear as the set of degrees of divisors of $T^n-1$ in $F[T]$, i.e., the sets
\[ \D_F(n):= \{0 \leq m \leq n: \text{$T^n-1$ has a divisor of degree $m$ over $F$}\}. \]
When this set consists of all integers $0 \leq m \leq n$, we call $n$ an \emph{$F$-practical number}. For example, $6$ is a $\Q$-practical number, as shown by the following list of divisors of $T^6-1$:
\begin{align*} 1, ~\; T-1, ~\; T^2+T+1, ~\; T^3-1, ~\; T^4 + T^3-T-1, ~\; T^5+T^4+T^3+T^2+T+1, ~\; T^6-1. 
\end{align*}
It is easy to see directly (for example, by applying Gauss's lemma) that if $T^n-1$ has a divisor of a given degree over $\Q$, then it has a divisor of the same degree over $\Z$. As a consequence, for any field $F$, each $\Q$-practical number is also an $F$-practical number.

The distribution of $\Q$-practical numbers has been investigated by the second author \cite{thompson11}. Recall that with $\Phi_d(T)$ denoting the $d$th cyclotomic polynomial, we have
\begin{equation}\label{eq:universal} T^n-1 = \prod_{d \mid n} \Phi_d(T). \end{equation}
Over $\Q$, each of the right-hand factors $\Phi_d(T)$ is irreducible of degree $\phi(d)$. It follows that a natural number $n$ is $\Q$-practical precisely when every integer $m \in [0,n]$ can be written as a sum of terms $\phi(d)$, where $d$ runs over a subset of the divisors of $n$. 

The term ``$F$-practical number'' is suggested by an analogy between the $\Q$-practical numbers and \emph{Srinivasan's practical numbers} \cite{srinivasan48}, which are numbers $n$ for which every $m \in [0,\sigma(n)]$ can be written as as a sum of distinct divisors of $n$. Such $n$ have been studied by several authors, including Erd\H{o}s \cite{erdos50}, Hausman \& Shapiro \cite{HS84}, Tenenbaum \cite{tenenbaum86, tenenbaum95P}, and Saias \cite{saias97}. In the last of these papers, Saias shows that for all $x\geq 2$, 
\begin{equation}\label{eq:saias} \#\{\text{practical $n \leq x$}\} \asymp \frac{x}{\log{x}}. \end{equation}
Exploiting the analogy between practical numbers and $\Q$-practical numbers, the second author (op. cit.) proved the $\Q$-practical analogue of Saias's estimates:
\[ \#\{\text{$\Q$-practical $n\leq x$}\} \asymp \frac{x}{\log{x}}.\] 
(In the above statements, the notation ``$f\asymp g$'' means that we have both $f\ll g$ and $g\ll f$.)

One of our goals in this paper is to gain some understanding of the $F$-practical numbers over more general fields $F$. We begin by observing  that each cyclotomic polynomial $\Phi_d(T)$ always splits into (not necessarily distinct) irreducible factors of the same degree over $F$. This is easy to see in the case when the characteristic of $F$, say $p$, does not divide $d$ (for example, in characteristic zero). In this case, the roots of $\Phi_d(T)$ are exactly the $\phi(d)$ primitive $d$th roots of unity from the algebraic closure of $F$. Each primitive $d$th root of unity generates the same extension of $F$, and thus all irreducible factors of $\Phi_d(T)$ have the same degree, as desired. The case when $p$ divides $d$ reduces to the previous one, since then $\Phi_d(T) = \Phi_{d_{(p)}}(T)^{\phi(d/d_{(p)})}$, where $d_{(p)}$ denotes the largest divisor of $d$ coprime to $p$.

From the last paragraph, it makes sense to define an arithmetic function $\phi_F$ by letting $\phi_F(d)$ denote the common degree of each irreducible factor of $\Phi_d(T)$ over $F$. (For example, $\phi_F = \phi$ when $F=\Q$.) Then each $\Phi_d(T)$ is a product of $\phi(d)/\phi_F(d)$ (not necessarily distinct) irreducible polynomials of degree $\phi_F(d)$. So from \eqref{eq:universal}, $m$ is the degree of a divisor of $T^n-1$ precisely when there is a collection $\Ss$ of divisors of $n$ for which $m$  can be written in the form
\begin{equation}\label{eq:representation} m=\sum_{d \in \Ss} a_d \phi_F(d), \quad\text{where each}\quad 0 \leq a_d \leq \frac{\phi(d)}{\phi_F(d)}. \end{equation}
In \S\ref{sec:numberfield}, we use this criterion and some easy algebraic number theory to extend Thompson's theorem on $\Q$-practical numbers to an arbitrary number field. Note that since each $\Q$-practical number is automatically $F$-practical, it is enough to prove the upper bound estimate.

\begin{thm}\label{thm:numberfield} Let $F$ be a number field. Then for $x \geq 2$, the number of $F$-practical numbers in $[1,x]$ is $\ll_{F} \frac{x}{\log{x}}$. 
\end{thm}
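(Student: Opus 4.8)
The plan is as follows. The lower bound requires no work: every $\Q$-practical number is $F$-practical, so the theorem of Thompson \cite{thompson11} quoted above already gives $\#\{F\text{-practical }n\le x\}\gg x/\log x$, and it remains only to prove the matching upper bound. For that, I would show that $F$-practicality forces the prime factorization of $n$ to satisfy a ``bounded gaps'' condition of exactly the shape that governs the $\Q$-practical numbers, and then invoke the counting argument of Saias \cite{saias97} and Thompson \cite{thompson11}.

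\emph{The algebraic input.} Put $D:=[F:\Q]$. The only fact needed from algebraic number theory is the uniform bound
\[ \frac{\phi(d)}{\phi_F(d)} \;=\; [F\cap\Q(\zeta_d):\Q] \;\le\; D \qquad (d\ge 1). \]
This is immediate: $\phi_F(d)=[F(\zeta_d):F]$, and since $\Q(\zeta_d)/\Q$ is Galois one has $[F(\zeta_d):\Q(\zeta_d)]=[F:F\cap\Q(\zeta_d)]$, whence $\phi_F(d)=\phi(d)/[F\cap\Q(\zeta_d):\Q]$; and $F\cap\Q(\zeta_d)$ is a subfield of $F$. In particular, every coefficient $a_d$ appearing in \eqref{eq:representation} is at most $D$.

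\emph{The structural lemma.} I claim that if $n=p_1^{a_1}\cdots p_k^{a_k}$ with $p_1<\cdots<p_k$ is $F$-practical, then, setting $N_i:=p_1^{a_1}\cdots p_i^{a_i}$ (and $N_0:=1$), one has $p_{i+1}\le D(N_i+1)+1$ for every $0\le i<k$; in particular $p_1\le 2D+1$ and $p_{i+1}\le C\,N_i$ for $1\le i<k$, with $C$ depending only on $F$. To prove this, note that any divisor $d$ of $n$ with $d\nmid N_i$ is divisible by some $p_j$ with $j>i$, so $\phi(d)\ge p_{i+1}-1$ and hence $\phi_F(d)\ge (p_{i+1}-1)/D$. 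Let $m$ be the largest integer strictly less than $(p_{i+1}-1)/D$; since $m\le n$, $F$-practicality gives $m=\sum_{d\in\Ss}a_d\phi_F(d)$ as in \eqref{eq:representation}. Any $d\in\Ss$ with $a_d\neq 0$ and $d\nmid N_i$ would already contribute a summand $\ge(p_{i+1}-1)/D>m$, which is impossible, so only divisors of $N_i$ occur; hence
\[ m=\sum_{\substack{d\in\Ss\\ d\mid N_i}}a_d\,\phi_F(d)\;\le\;\sum_{d\mid N_i}\frac{\phi(d)}{\phi_F(d)}\,\phi_F(d)\;=\;\sum_{d\mid N_i}\phi(d)\;=\;N_i. \]
Comparing with $m>(p_{i+1}-1)/D-1$ yields the asserted bound.

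\emph{The count, and the main obstacle.} It remains to estimate the number of $n\le x$ whose prime factorization $p_1^{a_1}\cdots p_k^{a_k}$, $p_1<\cdots<p_k$, satisfies $p_1\le C$ and $p_{i+1}\le C\,p_1^{a_1}\cdots p_i^{a_i}$ for $1\le i<k$. This is precisely the class of integers treated by Saias's sieve method for the practical numbers and adapted by Thompson to the $\Q$-practical numbers; that argument is insensitive to the value of $C$ and delivers the bound $\ll_C x/\log x$, which with $C=C(F)$ completes the proof. \textbf{The main difficulty lies in this last step}: the naive recursion --- remove the largest prime factor of $n$, bound the cofactor by the inductive estimate, and sum over primes by Mertens's theorem --- loses a factor of $\log\log x$, and eliminating it requires Saias's sharper analysis. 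The role of the algebraic preliminaries is exactly to ensure, through the uniformity of the bound $\phi/\phi_F\le D$, that the $F$-practical numbers are contained in a class of this already-understood type, with all constants depending only on $[F:\Q]$.
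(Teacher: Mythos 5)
Your proof is correct, but it follows a genuinely different route from the paper's. The paper never isolates the uniform bound $\phi(d)/\phi_F(d)=[F\cap\Q(\zeta_d):\Q]\le[F:\Q]$; instead it works with the discriminant $D$ of $F$, showing via Lemma \ref{lem:ramify} that $\phi_F(d)=\phi(d)$ whenever $(d,D)=1$, proving that an $F$-practical $n$ stays $F$-practical after multiplication by the first missing prime (Lemma \ref{lem:smallprime}), and concluding that $\lcm[n,M]$ with $M=\prod_{p\le 2|D|}p$ is Srinivasan-practical via the Stewart--Sierpi\'nski criterion; the count then comes straight from Saias's estimate \eqref{eq:saias} for practical numbers. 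You instead derive a direct structural constraint $p_{i+1}\le D(N_i+1)+1$ on the $F$-practical numbers themselves --- your argument for this is sound, including the choice of $m$ just below $(p_{i+1}-1)/D$ and the verification that only divisors of $N_i$ can contribute --- and then you must count the integers satisfying $p_{i+1}\le C N_i$ for all $i$. That class is exactly the set of integers with $C$-dense divisors (by Tenenbaum's equivalence between the divisor-ratio condition and the prime-factorization recursion), and its counting function is $\ll_C x/\log x$ by the main theorem of Saias \cite{saias97}, so your appeal is legitimate, though it rests on a slightly different citation than the one the paper uses (Saias's dense-divisor theorem rather than his practical-number count). Your route buys something the paper's does not: all constants depend only on the degree $[F:\Q]$, giving uniformity over number fields of bounded degree, and it avoids ramification/discriminant considerations entirely. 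What the paper's route buys is that it lands verbatim in the set of practical numbers, for which the needed upper bound is stated explicitly in the literature, at the cost of the extra lemmas about multiplying by small primes. You are also right that the only genuinely hard analytic input in either approach is Saias's sieve argument, which neither you nor the paper reproves.
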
 

In her thesis (\cite{lolathesis}; see also \cite{thompson12}, \cite{thompson11P}), Thompson  studies the $F$-practical numbers also in the case when $F = \F_p$ (with $p$ prime). To discuss this case further, we need some notation. Write $\ell_p(d)$ for the multiplicative order of $p$ modulo $d$, assuming that $\gcd(d,p)=1$. In general, put $\ell_p^{\ast}(d) = \ell_p(d_{(p)})$, where $d_{(p)}$ denotes the largest divisor of $d$ coprime to $p$. As shown in \cite{thompson11P}, we have $\phi_{\F_p} = \ell_p^{\ast}$. Our limited understanding of the distribution of the numbers $\ell_p^{\ast}(d)$ is a significant obstacle to the study of $\F_p$-practical numbers. To work around this, Thompson assumes the \emph{Generalized Riemann Hypothesis} (GRH). (Throughout this paper, GRH always means the Riemann Hypothesis for Dedekind zeta functions.) Under this assumption, she shows (ibid.) that for $x\geq 3$,
\[ \frac{x}{\log{x}} \ll \#\{\text{$\F_p$-practical $n \leq x$}\} \ll_{p} x \sqrt{\frac{\log\log{x}}{\log{x}}}. \] 
The numerical data (see, for instance, \cite[Tables 1.2--1.4]{lolathesis}) suggests that for each fixed $p$, the true count of $\F_p$-practical numbers is $\sim C_p x/\log{x}$, as $x\to\infty$, where $C_p$ is a positive constant depending on $p$.

Up to this point, we have been discussing integers $n$ for which $\D_F(n)$ is the entire interval $[0,n]$. A weaker notion also suggests itself: Take an integer $m \leq x$ and count how often, among those $n \leq x$, one has $m \in \D_F(n)$. In other words, instead of requiring $T^n-1$ to have divisors of every degree, we fix in advance a target degree $m$. Our next theorem gives an upper bound in the case when $F=\Q$. It is convenient to label once and for all the so-called \emph{Erd\H{o}s--Ford--Tenenbaum constant}
\begin{equation}\label{eq:deltadef} \delta:= 1 - \frac{1+\log\log{2}}{\log{2}}. \end{equation}
Numerically, $\delta \approx 0.0860713$.

\begin{thm}\label{thm:EL} Fix a value $\delta'$ with $0 < \delta' < \delta$, where $\delta$ is defined in \eqref{eq:deltadef}. Then if $3 \leq m \leq x$, the number of $n \leq x$ for which $T^n-1$ has a divisor of degree $m$ in $\Q[T]$ is $\ll x/(\log{m})^{\delta'}$.
\end{thm}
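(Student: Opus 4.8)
The plan is to translate the condition ``$T^{n}-1$ has a divisor of degree $m$ over $\Q$'' into a statement about subset sums of the multiset $\{\phi(d):d\mid n\}$, and then to reduce matters to the Erd\H{o}s--Ford--Tenenbaum estimate for the number of integers up to $x$ having a divisor in a short interval --- which is exactly where the constant $\delta$ enters. By \eqref{eq:representation} with $F=\Q$ (so $\phi_{\Q}=\phi$ and every quotient $\phi(d)/\phi_{\Q}(d)$ equals $1$), we have $m\in\D_{\Q}(n)$ precisely when $m=\sum_{d\in\Ss}\phi(d)$ for some set $\Ss$ of divisors of $n$. We may assume that $m$ exceeds a suitable absolute constant, since otherwise $x/(\log m)^{\delta'}\gg x$ and there is nothing to prove. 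Put $T:=(\log m)^{2}$. Given a representation $m=\sum_{d\in\Ss}\phi(d)$, I would split into two cases according to whether $|\Ss|>T$ or $|\Ss|\le T$.

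Suppose first that $|\Ss|>T$. Every $d\in\Ss$ has $\phi(d)\le m$, hence $d\le m^{2}$ for $m$ large, by the familiar lower bound $\phi(d)\gg d/\log\log d$. Thus $n$ has more than $T$ divisors not exceeding $m^{2}$, and since
\[ \sum_{n\le x}\#\{d\mid n:d\le m^{2}\}=\sum_{d\le m^{2}}\lfloor x/d\rfloor\le x(1+2\log m), \]
Markov's inequality shows that the number of $n\le x$ arising this way is $\ll x\log m/T=x/\log m\ll x/(\log m)^{\delta'}$. This handles the ``many summands'' case with room to spare.

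Now suppose $|\Ss|\le T$. Then some $d^{\ast}\in\Ss$ satisfies $\phi(d^{\ast})\ge m/T$, and since also $\phi(d^{\ast})\le m$, the inequalities $m/(\log m)^{2}\le\phi(d^{\ast})\le d^{\ast}\ll m\log\log m$ exhibit $d^{\ast}$ as a divisor of $n$ lying in an interval $I$ whose endpoints have ratio $\ll(\log m)^{3}$; in particular $I$ is covered by $\ll\log\log m$ dyadic blocks $(u,2u]$, each with $\log u\asymp\log m$ and $u\le x^{0.6}$ once $m\le\sqrt{x}$. For each fixed block, the Erd\H{o}s--Ford--Tenenbaum estimate for the count of integers up to $x$ with a divisor in a prescribed interval gives, for every fixed $\epsilon>0$, at most $\ll_{\epsilon}x/(\log m)^{\delta-\epsilon}$ such $n\le x$; summing over the $\ll\log\log m$ blocks yields $\ll_{\epsilon}x\log\log m/(\log m)^{\delta-\epsilon}$, which is $\ll x/(\log m)^{\delta'}$ as soon as $\epsilon<\delta-\delta'$. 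Combining the two cases proves the stated bound for $3\le m\le\sqrt{x}$.

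The step I expect to be the main obstacle is precisely this combinatorial reduction: the dichotomy on $|\Ss|$ is what converts the subset-sum condition $m\in\D_{\Q}(n)$ into ``$n$ has a divisor in a short interval near $m$,'' and the $\log\log m$ and $\epsilon$-losses incurred along the way must still be dominated by $(\log m)^{\delta-\delta'}$ --- which is why the theorem is stated with an arbitrary $\delta'<\delta$ rather than with $\delta$ itself. A secondary but real difficulty is the required uniformity over the whole range $3\le m\le x$: when $m$ is close to $x$, the interval $I$ above sits within a polylogarithmic factor of $x$, where the estimate for the number of integers with a divisor in a given interval degenerates, so that regime needs a separate treatment --- for instance by exploiting the symmetry $m\in\D_{\Q}(n)\Longleftrightarrow n-m\in\D_{\Q}(n)$ (immediate from $\sum_{d\mid n}\phi(d)=n$) to pass to the smaller target $n-m$, together with a direct argument showing that the $n$ which survive are sparse.
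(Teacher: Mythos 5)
Your treatment of the range $3\le m\le\sqrt{x}$ is essentially the paper's Case 1: translate $m\in\D_{\Q}(n)$ into a subset sum of the $\phi(d)$, discard the $n$ with many divisors below roughly $m$ by a first-moment bound, and observe that the surviving $n$ must have a divisor in an interval of the shape $(m/(\log m)^{2},\,Cm\log\log m]$, to which Ford's $H(x,y,z)$ estimate applies. (Two small corrections there: Ford's theorem applied to the whole interval, or to your dyadic blocks, in fact yields the full exponent $\delta$ in this range, with no $\epsilon$-loss; and the reason the theorem is stated with $\delta'<\delta$ is \emph{not} the $\log\log m$ losses you describe, but the large-$m$ case discussed next.)

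The genuine gap is the range $m>x\exp(-\log x/\log\log x)$, which you defer to ``a separate treatment.'' The symmetry $m\in\D_{\Q}(n)\iff n-m\in\D_{\Q}(n)$ is true but does not reduce to the already-proved case: the new target $n-m$ varies with $n$, so you are no longer counting $n$ with a \emph{fixed} small target, and the statement ``there exists some $k$ with $k\in\D_{\Q}(n)$'' is vacuous. Moreover, Ford's estimate genuinely degenerates here: after reflecting via $H(x,y,z)\asymp H(x,x/z,x/y)$, one has $\log(x/m)\le\log x/\log\log x$, so $u$ need not be small and the bound is useless. The paper's actual argument for this range is different in kind: one restricts to $n$ with $P^{+}(n)=p>\exp(2\log x/\log\log x)$ (smooth numbers being negligible) and with $\Omega(n)\le(1/\log 2-\epsilon)\log\log x$ (Lemma \ref{lem:HT2}; since $\delta=Q(1/\log 2)$, this is exactly where the exponent drops from $\delta$ to $\delta'$), then reduces the relation $m=\sum_{d\in\Ss}\phi(d)$ modulo $p-1$ to get $p-1\mid m-\sum_{d\in\T}\phi(d)$ with $\T$ supported on divisors of $r=n/p$, checks that this integer is positive, and counts $n=pr$ by first choosing $r$ (at most $x/\exp(2\log x/\log\log x)$ ways), then $\T$ (at most $2^{2^{\Omega(n)}}$ ways), then $p$ as a shifted divisor (at most $\exp(\log x/\log\log x)$ ways). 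Without this mod-$(p-1)$ counting argument, or something equivalent, the theorem is not proved for $m$ near $x$.
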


Theorem \ref{thm:EL} should be viewed as analogous to a theorem of Erd\H{o}s, who considered \cite[p. 130]{erdos70} how often a target natural number $m$ could be written as a sum of distinct divisors of $n$. Indeed, our proof uses many of the same ideas. However, Erd\H{o}s was content to work with fixed values of $m$, whereas we seek a result with complete uniformity in $m$.  

Our last result is a GRH-conditional version of Theorem \ref{thm:EL} with $F=\F_p$ rather than $F=\Q$.
\begin{thm}[assuming GRH]\label{thm:polypractical} Fix a prime $p$. Suppose that $3 \leq m \leq x$.
\begin{enumerate}
	\item If $3 \leq m \leq x^{1-1/\log\log{x}}$, then the 
number of $n \leq x$ for which $T^n-1$ has a divisor of degree $m$  in $\F_p[T]$ is \[ \ll_{p} x/(\log{m})^{1/13}. \] 
\item If $x^{1-1/\log\log{x}} < m \leq x$, then the count of such $n$ is
\[ \ll_{p} x/(\log{m})^{2/35}. \]
\end{enumerate}
\end{thm}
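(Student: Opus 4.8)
The plan is to adapt the Erd\H os-type argument underlying Theorem~\ref{thm:EL} to the representations in \eqref{eq:representation}, now tracking the behaviour of the multiplicative orders $\ell_p$. Using $\phi_{\F_p}=\ell_p^{\ast}$, the condition $m\in\D_{\F_p}(n)$ says that $m=\sum_{d\in\Ss}c_d$ for some set $\Ss$ of divisors of $n$ and integers $c_d$ that are multiples of $\ell_p^{\ast}(d)$ with $0\le c_d\le\phi(d)$. Write $n=p^an'$ with $p\nmid n'$. Since $\ell_p^{\ast}(p^b)=1$, each block $c_{p^b}$ runs over the full interval $[0,\phi(p^b)]$ of integers, and $\sum_{b=0}^{a}\phi(p^b)=p^a$, so already $[0,p^a]\subseteq\D_{\F_p}(n)$. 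In particular $m\in\D_{\F_p}(n)$ whenever $p^a\ge m$, and since there are only $\ll_p x/m\ll_p x/(\log m)^{2/35}$ such $n\le x$, these may be discarded; for the rest we have $p^a<m$. Writing each divisor of $n$ as $p^be$ with $e\mid n'$, and using $\phi(p^be)=\phi(p^b)\phi(e)$ together with $\ell_p^{\ast}(p^be)=\ell_p(e)\mid\phi(e)$, one checks that $m\in\D_{\F_p}(n)$ is equivalent to the existence of a set $E'$ of divisors $e>1$ of $n'$ and multiples $c_e$ of $\ell_p(e)$ with $0\le c_e\le p^a\phi(e)$ such that $\sum_{e\in E'}c_e\in[m-p^a,m]$. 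This forces $\sum_{e\in E'}\phi(e)\ge m/p^a-1$, and it requires $\gcd_{e\in E'}\ell_p(e)$ to be small enough --- or $m$ suitably placed modulo it --- for the resulting arithmetic progression of admissible sums to meet $m$.

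On this reformulation I would run an ``anatomy of integers'' argument, after first removing the usual sparse sets: the $n\le x$ with an abnormally large number of prime factors, with a prime factor exceeding $\sqrt x$, or that are $z_0$-smooth for a suitable threshold $z_0$; these are comfortably within the target bounds. Fix a cutoff $y=y(m)$ with $\log y=(\log m)^{\alpha}$ for a constant $\alpha<1$, chosen larger in part~(1) than in part~(2). I expect an Erd\H os-style case split to show that, for the surviving $n$, the existence of $E'$ forces $n$ into one of two sparse families: \emph{(a)} those $n$ divisible by $p^b$ times a ``$p$-order-smooth'' integer $e$ --- meaning $\ell_p(e)\le s$ for a parameter $s$, equivalently $e$ a divisor of $\prod_{j\le s}(p^{j}-1)$ --- with $\phi(p^b)\phi(e)$ necessarily comparable to $m$; or \emph{(b)} those $n$ having a divisor in a dyadic interval $(w,2w]$ with $\log w\asymp\log y$. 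Roughly: if the representation of $m$ does not rest on a single long progression coming from a divisor of small $\ell_p$-value, then a greedy, telescoping pass down the larger divisors of $n'$ pins down a divisor of $n$ of prescribed, intermediate size.

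It remains to count the two families. Family~(b) is handled by Ford's theorem on integers with a divisor in a prescribed interval: the number of $n\le x$ with a divisor in $(w,2w]$ is $\ll x/\bigl((\log w)^{\delta}(\log\log w)^{3/2}\bigr)$ with $\delta$ as in \eqref{eq:deltadef}, and with $\log w\asymp(\log m)^{\alpha}$ this is $\ll x/(\log m)^{\alpha\delta-o(1)}$; optimising $\alpha$ against the estimates for family~(a) is what should produce the exponents $1/13$ and $2/35$. Family~(a) I would count directly: summing $x/(p^be)$ over the admissible pairs gives a bound $\ll_p (x/m)\sum_{e:\,\ell_p(e)\le s}\phi(e)/e$, which is at most $x/m$ times the number of divisors of $\prod_{j\le s}(p^{j}-1)$, i.e.\ $(x/m)\exp(O_p(s^2/\log s))$, and this is acceptable once $s\ll_p\sqrt{\log m\,\log\log m}$. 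The delicate point is the subcase in which several divisors $e$ with small $\ell_p(e)$ are used together; bounding it amounts to bounding how often $n\le x$ has many prime factors $q$ with $\ell_p(q)$ small (equivalently, how often $\ell_p^{\ast}(n)$ is anomalously small), which is exactly the obstruction that forced GRH into Thompson's treatment of $\F_p$-practical numbers; here, as there, one appeals to effective Chebotarev in the Kummer--cyclotomic fields $\Q(\zeta_k,p^{1/k})$. The weaker exponent in part~(2) should reflect the fact that once $m>x^{1-1/\log\log x}$ the integers $n$ in play are confined to the short range $[m,x]$, which tightens the choices of the auxiliary power of $p$ and the cutoff $y$, so the optimal $\alpha$, hence the exponent, comes out smaller.

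The hard part will be obtaining complete uniformity in $m$. The Ford estimate for family~(b) wants $y$ (hence $w$) as large as possible, while the GRH-controlled bound for family~(a) wants $y$ (hence $s$) small, and reconciling these two pressures --- together with the shrinking of the range of $n$ as $m$ approaches $x$ --- simultaneously for every $m\in[3,x]$ is where essentially all the work lies. In particular the GRH error terms in the Chebotarev applications must be held in check uniformly in $m$, and it is this optimisation that places the transition between the two regimes precisely at $m=x^{1-1/\log\log x}$.
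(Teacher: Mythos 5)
Your reformulation via $n=p^an'$ is fine, but the heart of the argument is missing. The case split into families \emph{(a)} and \emph{(b)} is only asserted (``I expect an Erd\H os-style case split to show\dots''), and it is precisely here that the real difficulty lives: since there is no minimal-order result for $\ell_p^{\ast}$, nothing a priori prevents the representation of $m$ from using divisors $d$ of intermediate size with $\ell_p^{\ast}(d)$ of intermediate size, which fit neither a single ``$p$-order-smooth'' block of size $\asymp m$ nor a divisor in a prescribed dyadic window at height $\exp((\log m)^{\alpha})$. The paper's solution is a GRH-conditional structural lemma (Lemma~\ref{lem:tech}): for all but $O(x\log\log y/(\log y)^{\theta})$ integers $n\le x$, \emph{every} divisor $d>y$ of $n$ satisfies $\ell^{\ast}(d)>d/\exp(4(\log d)^{\theta})$. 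Taking $y=m$, this forces every divisor contributing to the representation to satisfy $d\le m\exp(5(\log m)^{0.079})$, and combined with Lemma~\ref{lem:smalldivisors} it pins down a divisor of $n$ in an interval at height $\asymp m$ (namely $(m/(\log m)^2,\, m\exp(5(\log m)^{0.079})]$), which is then counted by Ford's theorem (Lemmas~\ref{lem:ford1}--\ref{lem:ford2}). Your appeal to ``effective Chebotarev in $\Q(\zeta_k,p^{1/k})$'' gestures at the right input (Kurlberg--Pomerance), but the statement actually needed concerns all large divisors of $n$ simultaneously, and its proof requires in addition the Friedlander--Pomerance--Shparlinski bound on small values of $\lambda$, Gottschlich's lemma to pass from primes with small order to integers built from them, control of $\Omega(n;y)$, and the bookkeeping $p-1=\ell(p)\iota(p)$ with $\iota(p)\le\log p$; your family-(a) count only treats a single small-order divisor of size comparable to $m$ times a power of $p$, not the general ``several small-order divisors used together'' configuration you yourself flag.

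Part (2) is a second genuine gap: it cannot be obtained by ``tightening the choices of the auxiliary power of $p$ and the cutoff $y$'' in the same optimisation. When $m>x^{1-1/\log\log x}$, the divisor-in-an-interval bound at height $\asymp m$ degenerates (Ford's symmetry \ref{lem:ford2} sends it to an interval of bounded height, where $H\asymp x$), and the paper runs a different argument entirely: write $q=P^{+}(n)$, assume $n$ is not smooth, reduce the representation modulo $\ell^{\ast}(q)$ to get $\ell^{\ast}(q)\mid\bigl(m-\sum_{d\mid r}a_d\ell^{\ast}(d)\bigr)$ with $r=n/q$, and then count: the number of possible right-hand sides given $r$ is controlled by Lemma~\ref{lem:monotone} (divisibility of $\phi/\ell^{\ast}$ along the divisor lattice) together with Lemma~\ref{lem:tech} applied at $y=\exp(\log x/\log\log x)$ and the bound $\Omega(n)\le 1.359\log\log x$ from Lemma~\ref{lem:HT2}; the number of choices of $\ell^{\ast}(q)$ is bounded by the maximal order of the divisor function, and the number of $q$ given $\ell^{\ast}(q)$ by the integrality of $(q-1)/\ell^{\ast}(q)$ plus Lemma~\ref{lem:tech} again. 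None of this machinery (in particular the $\phi/\ell^{\ast}$ monotonicity lemma, which is what makes the residue count $\exp((\log x)^{0.9999})$ rather than trivial) appears in your outline, so as written the proposal does not yield either stated exponent, and in the range $m$ close to $x$ it has no viable mechanism at all.
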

\noindent The exponents $1/13$ and $2/35$ appearing above are close to the best our methods will yield. It would be interesting to know how close they are to being best possible.

One might compare Theorem \ref{thm:polypractical} with the result of Car \cite{car84} that in a wide range of $m$ and $n$, few polynomials of degree $n$ over $\F_p$ (or a general finite field $\F_q$) have a divisor of degree $m$. One must be cautious about such comparisons, however. For example, a typical polynomial of degree $n$ over $\F_p$ has about $n^{\log{2}}$ divisors (compare with \cite[Theorem 3.3.7]{KZ01}). However, for each fixed $A>0$, the polynomial $T^n-1$ has more than $\exp((\log{n})^{A})$ divisors on a set of $n$ of asymptotic density $1$. In fact, the same lower bounds holds almost always for the number $\phi(n)/\ell_p^{\ast}(n)$ of irreducible factors of $\Phi_n(T)$ in $\F_p[T]$; this follows from the normal order result for the Carmichael $\lambda$-function appearing as \cite[Theorem 2]{EPS91}.

A word about the organization of the paper:  We prove Theorem \ref{thm:numberfield} in \S\ref{sec:numberfield}. Theorem \ref{thm:EL} is proved in \S\ref{sec:proofthmEL}, after recalling some helpful results from the anatomy of integers. In \S\ref{sec:prep}, we review the GRH-conditional results needed for the proof of Theorem \ref{thm:polypractical}, which we present in \S\ref{sec:proofpolypractical}. We conclude the paper in \S\ref{sec:variants} by discussing some natural variants of the $\Q$-practical numbers. For example, we show that $2^{2^5}-1$ is the largest integer $n$ for which $T^n-1$ has exactly one monic divisor of each degree $0 \leq m \leq n$ in $\Q[T]$.

\subsection*{Notation} We write $\omega(n):=\sum_{p \mid n}1$ for the number of distinct prime factors of $n$ and $\Omega(n):=\sum_{p^k \mid n} 1$ for the number of prime factors of $n$ counted with multiplicity; $\Omega(n; y) := \sum_{p^k \mid n,~p \leq y} 1$ denotes the number of prime divisors of $n$ not exceeding $y$, again counted with multiplicity. The number of divisors of $n$ is denoted $d(n)$; for the number of divisors not exceeding $y$, we write $d(n; y)$. We use $P^{-}(m)$ and $P^{+}(m)$ for the smallest and largest prime factors of $m$, respectively, with the conventions that $P^{-}(1)=\infty$ and $P^{+}(1)=1$. An integer $n$ for which $P^{+}(n)\leq y$ is called \emph{$y$-smooth} (or \emph{$y$-friable}); the number of $y$-smooth $n\leq x$ is denoted $\Psi(x,y)$.

We use $\lambda(n)$ to denote the Carmichael $\lambda$-function, defined as the exponent of the finite abelian group $(\Z/n\Z)^{\times}$. For each natural number $n$ coprime to $a$, we write $\ell_a(n)$ for the multiplicative order of $a\bmod{n}$. For $n$ not necessarily coprime to $a$, we let $n_{(a)}$ denote the largest divisor of $n$ coprime to $a$, and we define $\ell_a^{\ast}(n) = \ell_a(n_{(a)})$. We call $\ell_a^{\ast}(n)$ the \emph{generalized order} of $a\bmod{n}$. (Note that $\ell_a^{\ast}(n)$ always divides $\lambda(n)$.)  When the intended value of $a$ is clear, we omit the subscripts on $\ell$ and $\ell^{\ast}$.

\section{Proof of Theorem \ref{thm:numberfield}}\label{sec:numberfield}
The proof of Theorem \ref{thm:numberfield} proceeds through a series of lemmas. The first of these, due to Stewart \cite{stewart54} and Sierpi\'nski \cite{sierpinski55}, characterizes Srinivasan's practical numbers in terms of their prime factorization.

\begin{lem}\label{lem:SS} Let $n$ be a natural number, and write the prime factorization of $n$ in the form $n = \prod_{i=1}^{r}p_i^{e_i}$, where each $e_i > 0$ and $p_1 < p_2 < \dots < p_r$. Let $j$ be the first index for which the inequality
\begin{equation}\label{eq:ssineq} p_j \leq 1+\sigma\left(\prod_{1 \leq i < j} p_i^{e_i}\right) \end{equation}
fails, where we take $j=r+1$ if no such index exists. Set
\begin{equation}\label{eq:nprime} n':= \prod_{1 \leq i < j} p_i^{e_i}. \end{equation}
Then every natural number $1 \leq m \leq \sigma(n')$ can be written as a sum of distinct divisors of $n$, but $\sigma(n')+1$ cannot be written as a sum of distinct divisors of $n$. Consequently, $n$ is practical precisely when  \eqref{eq:ssineq} holds for all indices $1 \leq j \leq r$.
\end{lem}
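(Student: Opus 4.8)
The plan is to prove the two halves of the first sentence by separate arguments and then read off the ``consequently'' clause. Introduce the shorthand $m_k := \prod_{1 \le i \le k} p_i^{e_i}$ for $0 \le k \le r$, so that $m_0 = 1$ and $m_{j-1} = n'$. I would prove, by induction on $k$, the statement $S(k)$: \emph{every integer in $[0,\sigma(m_k)]$ is a sum of distinct divisors of $m_k$}, for $0 \le k \le j-1$. The base case $S(0)$ is trivial. For the inductive step, fix $1 \le k \le j-1$ and assume $S(k-1)$; since $k < j$ and $j$ is by definition the first index at which \eqref{eq:ssineq} fails, the inequality holds at $k$, i.e. $p_k \le 1 + \sigma(m_{k-1})$. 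The divisors of $m_k$ are exactly the integers $p_k^{\,t} d$ with $0 \le t \le e_k$ and $d \mid m_{k-1}$, and distinct pairs $(t,d)$ give distinct such integers because $p_k \nmid m_{k-1}$. Hence, to represent a target $m \in [0,\sigma(m_k)]$, it suffices to produce a ``digit expansion''
\[ m = \sum_{t=0}^{e_k} p_k^{\,t}\, s_t, \qquad 0 \le s_t \le \sigma(m_{k-1}) \ \text{for all } t, \]
since then writing each $s_t$ as a sum of distinct divisors of $m_{k-1}$ (via $S(k-1)$) and scaling by $p_k^{\,t}$ exhibits $m$ as a sum of distinct divisors of $m_k$.

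The existence of such an expansion is a short self-contained lemma: if $b,D$ are integers with $b \ge 2$ and $D \ge b-1$, then every integer in $[0,\, D(1 + b + \cdots + b^{e})]$ equals $\sum_{t=0}^{e} d_t b^{t}$ for suitable $d_t \in \{0,1,\dots,D\}$. I would prove this by choosing the leading digit greedily, $d_e = \min(D, \lfloor m/b^{e}\rfloor)$, and inducting on $e$; the hypothesis $D \ge b-1$ is exactly what forces the remainder $m - d_e b^{e}$ into the range $[0,\,D(1 + b + \cdots + b^{e-1})]$ treated at the next step. Applying this with $b = p_k$, $D = \sigma(m_{k-1})$, $e = e_k$, and using $\sigma(m_{k-1})\cdot(1 + p_k + \cdots + p_k^{e_k}) = \sigma(m_{k-1})\sigma(p_k^{e_k}) = \sigma(m_k)$, completes the induction. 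Taking $k = j-1$ yields the first half.

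For the second half, if $j = r+1$ then $n' = n$ and the claim is immediate, since $\sigma(n)+1$ exceeds the sum of all divisors of $n$. If instead $j \le r$, the failure of \eqref{eq:ssineq} at $j$ gives $p_j \ge 2 + \sigma(n') > \sigma(n') + 1$. Suppose some set of distinct divisors of $n$ sums to $\sigma(n')+1$. Any divisor of $n$ that does not divide $n'$ is divisible by $p_i$ for some $i \ge j$, hence is at least $p_j > \sigma(n')+1$, so it cannot occur in the sum (all terms being positive); thus the sum involves only divisors of $n'$, whose total is $\sigma(n')$, a contradiction. Finally, for the ``consequently'': if \eqref{eq:ssineq} holds for every $1 \le j \le r$, then $j = r+1$ and $n' = n$, so the first half shows $n$ is practical; while if \eqref{eq:ssineq} fails for some $j$, then taking $j$ minimal we have $j \le r$, $n'$ is a proper divisor of $n$, hence $\sigma(n')+1 \le \sigma(n)$, and the second half shows this value is not a sum of distinct divisors of $n$, so $n$ is not practical.

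The mathematical substance is really just the greedy digit lemma together with the bookkeeping observation that the pieces $p_k^{\,t} d$ assembled in the induction are genuinely distinct divisors of $m_k$; neither is hard. I therefore expect the main difficulty to be organizational: phrasing the induction so that the single inequality $p_k \le 1 + \sigma(m_{k-1})$ is visibly the only hypothesis used at each stage, and handling cleanly the two boundary cases $k = j-1$ and $j = r+1$.
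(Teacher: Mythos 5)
Your proof is correct and complete: the digit-expansion lemma with the hypothesis $D \ge b-1$ is exactly the standard Stewart--Sierpi\'nski argument, and your verification that the pieces $p_k^{\,t}d$ are distinct divisors, together with the treatment of the non-representability of $\sigma(n')+1$, is sound. The paper itself does not reprove this lemma (it simply cites Stewart and Sierpi\'nski), but your argument matches the classical proof of \eqref{eq:ssineq}-type practicality criteria.
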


In what follows, we refer to $n'$, as defined in \eqref{eq:nprime}, as the \emph{practical component} of $n$. It can be shown (cf. \cite[Proposition 4]{margenstern91}) that the practical component $n'$ is the largest practical divisor of $n$.

For the remainder of the proof, we fix a number field $F$, and we consider $F$ as a subfield of a fixed algebraic closure $\overline{\Q}$ of $\Q$. We use $\zeta_d$ for a primitive $d$th root of unity from $\overline{\Q}$. In the next several lemmas, we show that if $n$ is $F$-practical, then there is a small multiple of $n$ that is practical in the sense of Lemma \ref{lem:SS}. The desired upper bound then follows from Saias's upper bound \eqref{eq:saias} on the count of practical numbers.

\begin{lem}\label{lem:ramify} Let $d$ be a natural number coprime to the (absolute) discriminant of $F$. Then $\phi_F(d)=\phi(d)$. 
\end{lem}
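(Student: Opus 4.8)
The plan is to reduce the claim to a statement about ramification and then invoke the classical fact (due to Minkowski) that $\Q$ has no nontrivial everywhere-unramified extension. First, since $F$ has characteristic zero, every monic irreducible factor of $\Phi_d(T)$ in $F[T]$ divides $\prod_{\zeta}(T-\zeta)$ (product over primitive $d$th roots of unity $\zeta \in \overline{\Q}$), hence is the minimal polynomial over $F$ of some primitive $d$th root of unity. Any primitive $d$th root $\zeta$ satisfies $F(\zeta) = F(\zeta_d)$ (each is a power of the other with exponent coprime to $d$), so each such minimal polynomial has degree $[F(\zeta_d):F]$. Thus $\phi_F(d) = [F(\zeta_d):F]$, while $\phi(d) = [\Q(\zeta_d):\Q]$, and the lemma is equivalent to the assertion that $F$ and $\Q(\zeta_d)$ are linearly disjoint over $\Q$.

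Next, because $\Q(\zeta_d)/\Q$ is Galois, restriction of automorphisms embeds $\mathrm{Gal}(F(\zeta_d)/F)$ into $\mathrm{Gal}(\Q(\zeta_d)/\Q)$ with image $\mathrm{Gal}(\Q(\zeta_d)/(\Q(\zeta_d)\cap F))$; hence $[F(\zeta_d):F] = [\Q(\zeta_d):\Q(\zeta_d)\cap F]$, and it suffices to prove that $L := \Q(\zeta_d)\cap F$ equals $\Q$. I would now argue that $L/\Q$ is unramified at every rational prime $q$. On one hand $L \subseteq \Q(\zeta_d)$, and the rational primes ramifying in $\Q(\zeta_d)$ are exactly those dividing $d$ (one may quote that $\mathrm{disc}(\Q(\zeta_d))$ is, up to sign, a divisor of $d^{\phi(d)}$); since the set of ramified primes can only grow when one passes to a larger number field, no prime $q \nmid d$ ramifies in $L$. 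On the other hand $L \subseteq F$, so by the same reasoning no prime $q \nmid \mathrm{disc}(F)$ ramifies in $L$. Since $\gcd(d,\mathrm{disc}(F)) = 1$ by hypothesis, $L/\Q$ is unramified everywhere, so $|\mathrm{disc}(L)| = 1$, and Minkowski's discriminant bound forces $L = \Q$.

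The argument is short and essentially formal. The only steps needing care are the clean identifications $\phi_F(d) = [F(\zeta_d):F]$ and $[F(\zeta_d):F] = [\Q(\zeta_d):\Q(\zeta_d)\cap F]$ from the Galois theory of compositum fields, together with the correct citation that a prime dividing the discriminant of a subfield also divides the discriminant of the overfield. No genuine obstacle is anticipated beyond bookkeeping.
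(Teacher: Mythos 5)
Your argument is correct, but it takes a genuinely different route from the paper's. The paper's proof is a two-line application of a packaged theorem: since $\mathrm{disc}(\Q(\zeta_d))$ divides $d^{\phi(d)}$, the fields $F$ and $\Q(\zeta_d)$ have coprime discriminants, and the cited result (Ribenboim) says that the compositum of two number fields with coprime discriminants has degree equal to the product of the degrees; hence $[F(\zeta_d):F]=\phi(d)$. You instead unwind that black box: using that $\Q(\zeta_d)/\Q$ is Galois, you reduce the claim to $\Q(\zeta_d)\cap F=\Q$ via the restriction isomorphism $\mathrm{Gal}(F(\zeta_d)/F)\cong \mathrm{Gal}(\Q(\zeta_d)/\Q(\zeta_d)\cap F)$, then observe that the intersection is everywhere unramified (its ramified primes must divide both $d$ and $\mathrm{disc}(F)$) and invoke Minkowski. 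This is essentially the standard proof of the quoted compositum theorem in the case where one of the fields is Galois over $\Q$; what you gain is a self-contained argument resting only on Minkowski's bound and multiplicativity of ramification in towers, at the cost of needing the Galois hypothesis that the general citation avoids. Two minor points: the primes ramifying in $\Q(\zeta_d)$ are not quite ``exactly'' those dividing $d$ (consider $d\equiv 2\pmod 4$), but you only use, and correctly justify, the containment; and the identification $\phi_F(d)=[F(\zeta_d):F]$ that you re-derive at the outset is already established in the paper's introduction, so it could simply be cited.
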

\begin{proof} Since the discriminant of $\Q(\zeta_d)$ divides $d^{\phi(d)}$ (see \cite[p. 269]{ribenboim72}), the number fields $F$ and $\Q(\zeta_d)$ have relatively prime discriminants. Since $F(\zeta_d)$ is the compositum of $F$ and $\Q(\zeta_d)$, we have (see \cite[p. 218]{ribenboim72}) 
	\[ [F(\zeta_d):\Q] = [F:\Q]\cdot [\Q(\zeta_d):\Q] = [F:\Q] \phi(d). \]
It follows that $\phi(d) = \frac{[F(\zeta_d):\Q]}{[F:\Q]} = [F(\zeta_d):F] =\phi_F(d)$, as claimed.
\end{proof}

\begin{lem}\label{lem:primeproduct} Let $p$ be a prime number. The product of the primes less than $p$ is always at least $p-1$.
\end{lem}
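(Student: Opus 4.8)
The plan is to dispose of two tiny cases by inspection and then invoke Bertrand's postulate. Write $N$ for the product of all primes strictly less than $p$, with the convention that $N=1$ when there are no such primes; the claim is that $N\ge p-1$.

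First I would handle $p=2$ (here $N=1=p-1$) and $p=3$ (here $N=2=p-1$), which turn out to be exactly the cases of equality. Assume from now on that $p\ge 5$, and let $q$ be the largest prime below $p$. Then $q\ge 3$, so $2$ and $q$ are two \emph{distinct} prime divisors of $N$, and hence $N\ge 2q$. It remains to check that $2q\ge p$. If instead $2q\le p-1$, then Bertrand's postulate, in the form that every interval $(n,2n]$ with $n\ge 1$ contains a prime, applied with $n=q$ yields a prime $q'$ with $q<q'\le 2q\le p-1<p$, contradicting the maximality of $q$ among primes below $p$. Therefore $N\ge 2q\ge p>p-1$, with a bit of room to spare.

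The only genuine ingredient is Bertrand's postulate, so I do not expect a real obstacle here; the points needing a little care are the degenerate case $p=2$, where the product defining $N$ is empty, and the fact that in the main case one needs \emph{two} distinct primes below $p$, which is available once $p\ge 5$ but fails at $p=3$ — this is why the case split is placed at $5$ rather than $3$. (One could instead extract the bound from effective Chebyshev-type estimates for $\prod_{\ell\le x}\ell$, but Bertrand's postulate gives the cleanest route.)
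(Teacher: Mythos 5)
Your proof is correct. It differs from the paper's argument in one structural respect: the paper proceeds by induction over the primes, bounding the product of the primes below $p$ by $q\cdot(\text{product of primes below }q)\ge q(q-1)$ via the inductive hypothesis (where $q$ is the prime directly preceding $p$), and then combining $q>p/2$ from Bertrand's postulate with $q\ge 3$ to get $q(q-1)\ge 3q-3>\tfrac32 p-3\ge p-1$ for $p\ge 5$. You dispense with induction entirely: since $2$ and $q$ are distinct primes below $p$ once $p\ge 5$, the product is at least $2q$, and Bertrand's postulate applied at $n=q$ forces $2q\ge p$ (else a prime in $(q,2q]$ would lie below $p$ and exceed $q$), giving the strict bound $N\ge p>p-1$ outside the equality cases $p=2,3$. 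Both arguments rest on the same essential input, Bertrand's postulate applied to the prime preceding $p$, plus a check of small cases; yours is a little shorter and makes the equality cases explicit, while the paper's inductive bound $q(q-1)$ is numerically stronger but unnecessary for the stated claim. Your handling of the degenerate case $p=2$ (empty product equal to $1$) and the placement of the case split at $p\ge 5$ (to guarantee two distinct primes below $p$) are both correct.
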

\begin{proof} This is easy to verify directly for primes $p < 5$. Now suppose that the claim has been shown for all primes smaller than $p$, where $p \geq 5$, and let $p'$ be the prime directly preceding $p$. Note that $p < 2p'$, by Bertrand's postulate. By the induction hypothesis, the product of the primes smaller than $p$ is at least 
	\[ p'(p'-1) \geq 3(p'-1) = 3p' - 3 > \frac{3}{2}p-3 \geq p-1, \]
since $p \geq 5$.
\end{proof}

\begin{lem}\label{lem:smallprime} If $n$ is $F$-practical and $p$ is the first prime not dividing $n$, then $pn$ is also $F$-practical.
\end{lem}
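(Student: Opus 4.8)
The plan is to exploit the factorization $T^{pn}-1 = \prod_{d\mid n}\Phi_d(T)\cdot\prod_{d\mid n}\Phi_{pd}(T)$, valid since $p\nmid n$, and to realize every degree in $\{0,1,\dots,pn\}$ by combining divisors drawn from the two products on the right. Recall from \eqref{eq:representation} that the degrees of divisors of $T^{pn}-1$ over $F$ are exactly the integers of the form $\sum_{e\mid pn}b_e\phi_F(e)$ with $0\le b_e\le\phi(e)/\phi_F(e)$, and that, because $p\nmid n$, the divisors $e$ of $pn$ are precisely the numbers $d$ and $pd$ with $d\mid n$. Since $n$ is $F$-practical, using only the coefficients $b_d$ with $d\mid n$ already realizes every integer in $[0,n]$ as such a degree. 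Hence it suffices to prove: (i) using only the coefficients $b_{pd}$ realizes every element of the dilated set $(p-1)\cdot[0,n]:=\{(p-1)k:0\le k\le n\}$; and (ii) every integer in $[0,pn]$ lies in the sumset $[0,n]+(p-1)\cdot[0,n]$.

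For (i), fix $d\mid n$. Since $p\nmid d$ we have $\phi(pd)=(p-1)\phi(d)$, while (using $F(\zeta_{pd})=F(\zeta_d,\zeta_p)$, which holds as $\gcd(p,d)=1$) we have $\phi_F(pd)=[F(\zeta_{pd}):F]=[F(\zeta_d)(\zeta_p):F(\zeta_d)]\cdot\phi_F(d)$; moreover the index $[F(\zeta_d)(\zeta_p):F(\zeta_d)]$ divides $p-1$, because (as observed in the introduction for an arbitrary base field) $\Phi_p$ factors over $F(\zeta_d)$ into irreducibles of a common degree, which must then divide $\deg\Phi_p=p-1$. Therefore $\phi_F(pd)\mid(p-1)\phi_F(d)$, and so for every integer $k$ with $0\le k\le\phi(d)/\phi_F(d)$ the quantity $(p-1)k\phi_F(d)$ is a multiple of $\phi_F(pd)$ lying in $[0,(p-1)\phi(d)]=[0,\phi(pd)]$ --- i.e.\ an admissible value for the contribution $b_{pd}\phi_F(pd)$. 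Summing over $d\mid n$ and invoking the $F$-practicality of $n$ once more yields (i).

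For (ii), let $m$ be an integer with $0\le m\le pn$. If $m>(p-1)n$, put $B=n$ and $A=m-(p-1)n$, so that $A\in(0,n]$. If $m\le(p-1)n$, put $B=\lfloor m/(p-1)\rfloor\le n$ and $A=m-(p-1)B\in[0,p-2]$; here $A\le n$ because $p$ is the first prime not dividing $n$, so every prime below $p$ divides $n$, whence $n\ge\prod_{q<p}q\ge p-1$ by Lemma \ref{lem:primeproduct}. Either way $m=A+(p-1)B$ with $A,B\in[0,n]$, so $m$ is the degree of a divisor of $T^{pn}-1$ over $F$; since $m$ was arbitrary, $pn$ is $F$-practical.

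The only point needing care is the discrepancy between $\phi_F$ and $\phi$. Over $\Q$ one has the exact identity $\phi(pd)=(p-1)\phi(d)$, and the argument reduces to Thompson's; over a general number field $\phi_F(pd)$ need not equal $(p-1)\phi_F(d)$, so the ``step size'' contributed by $\Phi_{pd}$ can be coarser than one would like. What rescues the argument is the divisibility $\phi_F(pd)\mid(p-1)\phi_F(d)$: this coarser step size still evenly divides $(p-1)\phi_F(d)$, so every multiple of $(p-1)\phi_F(d)$ up to $\phi(pd)$ can still be hit, which is exactly what step (i) requires.
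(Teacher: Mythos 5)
Your proof is correct and follows essentially the same route as the paper's: both rest on the divisibility $\phi_F(pd)/\phi_F(d)=[F(\zeta_d)(\zeta_p):F(\zeta_d)]\mid p-1$ to dilate a representation of $B$ by $p-1$ into a divisor of $\prod_{d\mid n}\Phi_{pd}(T)$, and both use Lemma \ref{lem:primeproduct} to guarantee that the remainder $A<p-1$ is at most $n$. Your sumset formulation $[0,pn]\subseteq[0,n]+(p-1)\cdot[0,n]$ is just a repackaging of the paper's division $m=(p-1)q+r$ together with its separate treatment of $m\ge(p-1)n$.
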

\begin{proof} We need to show that $T^{pn}-1$ has a divisor of degree $m$ over $F$ for all $0 \leq m \leq pn$. Since $\frac{T^{pn}-1}{T^n-1}$ has degree $(p-1)n$, and $T^n-1$ has a divisor of each degree in $[0,n]$, we see that $T^{pn}-1$ has a divisor of every degree $m$ with $(p-1)n \leq m \leq pn$.  So we can assume that $0 \leq m < (p-1)n$. 
	
	Write $m = (p-1)q+r$, where $0 \leq q < n$ and $0 \leq r < p-1$. Since $n$ is divisible by all primes $< p$, we have from Lemma \ref{lem:primeproduct} that $n \geq p-1 > r$. We are assuming that $n$ is $F$-practical, and so there is a divisor $f(T) \in F[T]$ of $T^n-1$ of degree $r$. That is, there is an $f(T) \in F[T]$ of degree $r$ for which 
	\begin{equation}\label{eq:fdividescyc} f(T) \mid \prod_{d \mid n} \Phi_d(T). \end{equation}
Similarly, since $q < n$, there is a divisor of $T^n-1$ of degree $q$. Such a divisor implies the existence of a representation (as in \ref{eq:representation})
\begin{equation}\label{eq:qsumrep} q = \sum_{d \mid n} a_d \phi_F(d), \quad\text{where}\quad 0 \leq a_d \leq \frac{\phi(d)}{\phi_F(d)}. \end{equation}
Multiplying \eqref{eq:qsumrep} by $p-1$, we obtain a representation
\begin{align} \notag (p-1)q &= \sum_{d \mid n} \left(a_d \frac{p-1} {\phi_F(pd)/\phi_F(d)}\right) \phi_F(pd)
	\\ &= \sum_{d \mid n} b_d \phi_F(pd), \quad\text{with each}\quad b_d:= a_d \frac{p-1} {\phi_F(pd)/\phi_F(d)}.\label{eq:sumrep2} \end{align}
With $F_d := F(\zeta_d)$, we have (noting that $p \nmid d$, since $p \nmid n$)
\[ \frac{\phi_F(pd)}{\phi_F(d)} = [F(\zeta_{pd}): F(\zeta_d)] = [F_d(\zeta_p): F_d] =\phi_{F_d}(p) \mid p-1, \]
and so all the $b_d$ are integers. Moreover, for each $d$ dividing $n$,
\[ 0 \leq b_d \leq \frac{\phi(d)}{\phi_F(d)} \frac{p-1} {\phi_F(pd)/\phi_F(d)} = \frac{\phi(pd)}{\phi_F(pd)}. \]
We now deduce from \eqref{eq:sumrep2} that there is a $g(T) \in F[T]$ of degree $(p-1)q$ for which
\begin{equation}\label{eq:gdividescyc}g(T) \mid \prod_{d \mid n} \Phi_{pd}(T).\end{equation} Combining \eqref{eq:fdividescyc} and \eqref{eq:gdividescyc}, we see that over $F$,
\[ f(T) g(T) \mid \left(\prod_{d \mid n} \Phi_{d}(T) \Phi_{pd}(T)\right) = T^{pn}-1, \]
and $fg$ has degree $r + (p-1)q = m$. So $fg$ is our sought-after divisor.
\end{proof}

Repeatedly applying Lemma \ref{lem:smallprime}, we arrive at the following result.

\begin{lem}\label{lem:smallprimecor} If $n$ is $F$-practical, then $\lcm[n, \prod_{p \leq z}p]$ is $F$-practical for every real number $z$.
\end{lem}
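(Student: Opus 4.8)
The plan is to obtain $\lcm[n, \prod_{p\le z}p]$ from $n$ by a finite chain of applications of Lemma~\ref{lem:smallprime}, at each stage multiplying by the smallest prime absent from the current number, and to check that this chain terminates precisely at $\lcm[n, \prod_{p\le z}p]$. Thus $F$-practicality is preserved at every link of the chain, and $\lcm[n, \prod_{p\le z}p]$ sits at the end of it.

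Concretely, I would set $n_0 := n$ and define a sequence recursively: given the $F$-practical number $n_k$, let $p_{k+1}$ be the least prime not dividing $n_k$; if $p_{k+1} > z$, stop and put $K := k$; otherwise set $n_{k+1} := p_{k+1} n_k$, which is again $F$-practical by Lemma~\ref{lem:smallprime}. Two bookkeeping facts drive the argument. First, since every prime below $p_{k+1}$ divides $n_k$, the adjoined primes strictly increase: $p_1 < p_2 < \cdots$. Second, passing from $n_k$ to $n_{k+1} = p_{k+1} n_k$ deletes exactly $p_{k+1}$ from the set of primes missing from $n_k$, so the primes missing from $n_k$ are always precisely those missing from $n$ other than $p_1,\dots,p_k$. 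Because the $p_j$ are strictly increasing and each is at most $z$, only finitely many steps are possible, so the process terminates at some $n_K$; and at termination $p_1,\dots,p_K$ are exactly the primes $p\le z$ not dividing $n$. Hence
\[
  n_K = n \cdot \prod_{\substack{p\le z \\ p\nmid n}} p = \lcm\bigl[n,\ {\textstyle\prod_{p\le z}p}\bigr],
\]
and this number is $F$-practical, as wanted. (The boundary cases are harmless: if $z<2$, or if $n$ is already divisible by every prime $p\le z$, then $K=0$ and the displayed identity reads $n_0=n$.)

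The one genuinely substantive point — and the reason this needs more than a one-line appeal to Lemma~\ref{lem:smallprime} — is that we must land on $\lcm[n,\prod_{p\le z}p]$ \emph{exactly}. Being $F$-practical is not obviously inherited by divisors, so it would not suffice merely to produce some $F$-practical multiple of $\lcm[n,\prod_{p\le z}p]$; the content lies in verifying, via the two bookkeeping facts above, that the greedy ``multiply by the smallest missing prime'' iteration picks up exactly the primes $\le z$ absent from $n$, in increasing order, with no overshoot. That verification is elementary, so beyond getting this accounting right I do not anticipate any real obstacle.
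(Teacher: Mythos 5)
Your argument is correct and is exactly the route the paper takes: the paper derives this lemma solely by the remark ``Repeatedly applying Lemma~\ref{lem:smallprime}, we arrive at the following result,'' and your greedy iteration with the bookkeeping that the adjoined primes are strictly increasing, all at most $z$, and exhaust precisely the primes $p\le z$ not dividing $n$ is just a careful write-up of that same induction.
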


\begin{lem}\label{lem:convert2practical} Set $M:= \prod_{p \leq 2|D|}p$, where $D$ is the discriminant of $F$. If $n$ is $F$-practical, then $\lcm[n,M]$ is practical (in the sense of Srinivasan).
\end{lem}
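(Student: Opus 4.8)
The plan is to read the result off the Stewart--Sierpi\'nski criterion (Lemma~\ref{lem:SS}), after first upgrading $\lcm[n,M]$ to an $F$-practical number. Set $N := \lcm[n,M]$; since $M = \prod_{p \le 2|D|}p$, Lemma~\ref{lem:smallprimecor} (with $z = 2|D|$) shows that $N$ is again $F$-practical, and clearly $M \mid N$. Write $N = \prod_{i=1}^{r}p_i^{e_i}$ with $p_1 < \dots < p_r$, and put $N_{j-1} := \prod_{1 \le i<j}p_i^{e_i}$ for the part of $N$ supported on the primes below $p_j$. By Lemma~\ref{lem:SS}, it is enough to prove $p_j \le 1 + \sigma(N_{j-1})$ for every $j \in \{1,\dots,r\}$. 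Since $|D| \ge 1$ forces $2 \mid M \mid N$, we have $p_1 = 2$ and the case $j=1$ is clear; for $j \ge 2$ we have $N_{j-1} \ge 2$, hence $\sigma(N_{j-1}) \ge N_{j-1}+1$, so it suffices to prove the cleaner bound $p_j \le N_{j-1}+2$.

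For $j \ge 2$ I would argue in two cases. If $p_j \le 2|D|$, then every prime below $p_j$ divides $M$, hence $N$, so $p_1,\dots,p_{j-1}$ are exactly the primes $< p_j$; Lemma~\ref{lem:primeproduct} then gives $N_{j-1} \ge \prod_{i<j}p_i \ge p_j-1$, which suffices. If $p_j > 2|D|$, I would suppose for contradiction that $p_j \ge N_{j-1}+3$ and exhibit a missing degree. Take $m := N_{j-1}+1$, so that $N_{j-1} < m < p_j-1$ and $m \in [0,N]$. As $N$ is $F$-practical, $T^N-1$ has a divisor of degree $m$, so by~\eqref{eq:representation} we may write $m = \sum_{d\mid N}a_d\phi_F(d)$ with $0 \le a_d \le \phi(d)/\phi_F(d)$. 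The divisors $d \mid N_{j-1}$ contribute at most $\sum_{d \mid N_{j-1}}\phi(d) = N_{j-1} < m$, so some $d \nmid N_{j-1}$ has $a_d \ge 1$. Such a $d$ has a prime factor $\ge p_j$; writing $d = d' d''$ where $d'' > 1$ collects the primes of $d$ that are $\ge p_j$, every prime factor of $d''$ exceeds $2|D| \ge |D|$, so $\gcd(d'',D)=1$, and Lemma~\ref{lem:ramify} together with the inclusion $F(\zeta_{d''}) \subseteq F(\zeta_d)$ gives
\[ \phi_F(d) = [F(\zeta_d):F] \ge [F(\zeta_{d''}):F] = \phi_F(d'') = \phi(d'') \ge \phi(p_j) = p_j - 1 > m. \]
Thus a single term of the sum exceeds $m$ --- a contradiction. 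Hence $p_j \le N_{j-1}+2$, and we are done.

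I expect the only real content to be this last case $p_j > 2|D|$: the key observation, powered by Lemma~\ref{lem:ramify}, is that any divisor of $N$ divisible by a prime $\ge p_j$ contributes a $\phi_F$-value of at least $p_j-1$, while divisors of $N_{j-1}$ can only build degrees summing to $N_{j-1}$; so a prime $p_j$ much larger than $N_{j-1}$ would leave the degree $N_{j-1}+1$ unrepresented, contradicting $F$-practicality. The one bit of care needed is tracking the additive constants --- the argument naturally delivers $p_j \le N_{j-1}+2$ rather than the literal $p_j \le 1+\sigma(N_{j-1})$ --- which is why it is convenient to dispatch $j=1$ separately (via $2 \mid N$) and to invoke $\sigma(N_{j-1}) \ge N_{j-1}+1$ for $j \ge 2$. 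Once the hypotheses of Lemma~\ref{lem:SS} are verified, $N = \lcm[n,M]$ is practical, which is the claim.
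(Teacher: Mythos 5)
Your proof is correct and follows essentially the same route as the paper's: upgrade $N=\lcm[n,M]$ to an $F$-practical number via Lemma~\ref{lem:smallprimecor}, handle primes $p_j\le 2|D|$ with Lemma~\ref{lem:primeproduct}, and in the large-prime case use the representation \eqref{eq:representation} of the degree $N_{j-1}+1$ together with Lemma~\ref{lem:ramify} to show any divisor with a prime factor $\ge p_j$ contributes a $\phi_F$-value that is too large. The only cosmetic differences are that you verify the Stewart--Sierpi\'nski inequality directly at each index rather than arguing by contradiction through the practical component, and you apply Lemma~\ref{lem:ramify} to the full large-prime part $d''$ instead of to a single large prime divisor.
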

\begin{proof} Put $N:= \lcm[n,M]$. By Lemma \ref{lem:smallprimecor}, $N$ is $F$-practical. We will show that $N$ satisfies the Stewart--Sierpi\'nski practicality criterion given in Lemma \ref{lem:SS}. Assuming $N$ is not practical, let $N'$ be the practical component of $N$. Then $N' < N$, and by Lemma \ref{lem:SS}, with $p$ denoting the smallest prime dividing $N/N'$, we have
	\begin{equation}\label{eq:bigp} p > \sigma(N')+1. \end{equation}
	
We must also have that $p > 2|D|$. To see this, observe that by construction, $N$ is divisible by all primes not exceeding $2|D|$. So if $p \leq 2|D|$, then $N'$ is divisible by all primes $< p$, and so by Lemma \ref{lem:primeproduct},
\[ 1 + \sigma(N') \geq 1+ \prod_{\substack{q < p\\q \text{ prime}}} (q+1) \geq 1 + \prod_{\substack{q < p\\q \text{ prime}}}q \geq 1 + (p-1)=p, \]
contradicting \eqref{eq:bigp}. Hence, $p > 2|D|$. 

We claim that $T^N-1$ has no divisor of degree $N'+1$, contradicting that $N$ is $F$-practical. Suppose contrariwise that
\begin{equation}\label{eq:sumexpr} N'+1 = \sum_{d \mid N} a_d \phi_F(d), \quad\text{where}\quad 0 \leq a_d \leq \frac{\phi(d)}{\phi_F(d)}. \end{equation}
The contribution to the sum in \eqref{eq:sumexpr} from divisors $d$ of $N'$ is bounded by
$\sum_{d\mid N'} \phi(d) = N'$; hence, there must be a $d$ dividing $N$ but not $N'$ which contributes to the right-hand side of \eqref{eq:sumexpr}. Since all the summands on the right-hand side of \eqref{eq:sumexpr} are nonnegative, clearly
\begin{equation}\label{eq:smaller} \phi_F(d) \leq N'+1. \end{equation}
Since $d$ divides $N$ but not $N'$, we can choose a prime $r$ dividing $\gcd(d, N/N')$. Clearly,
\[ r \geq P^{-}(N/N')=p > \max\{2|D|,\sigma(N')+1\}. \]
Since $r\mid d$ and $r \nmid D$, Lemma \ref{lem:ramify} shows that
\[ \phi_F(d) = [F(\zeta_d):F] \geq [F(\zeta_r):F] = \phi_F(r) = \phi(r) = r-1 \geq \sigma(N') +1. \]
Since $2 \mid N$, the practical component $N'$ of $N$ satisfies $N' \geq 2$, and so $\sigma(N') \geq N'+1$. Thus,
$\phi_F(d) \geq N'+2 > N'+1$, contradicting \eqref{eq:smaller}.
\end{proof}

\begin{proof}[Proof of Theorem \ref{thm:numberfield}] Define $M$ as in Lemma \ref{lem:convert2practical}. If $n \leq x$ is $F$-practical, then $dn$ is practical for some $d$ dividing $M$, namely $d = M/(M,n)$. Since $dn \leq dx$, the upper-estimate of \eqref{eq:saias} shows that the number of $F$-practical $n \leq x$ corresponding to this $d$ is $\ll dx/\log{x}$. Summing over the $O_F(1)$ divisors $d$ of $M$ completes the proof.
\end{proof}

\section{Proof of Theorem \ref{thm:EL}}\label{sec:proofthmEL}
The next few lemmas collect certain structural results about integers needed for the proof of Theorem \ref{thm:EL}. The first is a classical result of Landau (see \cite[Theorem 328, p. 352]{HW08}) giving the minimal order of the Euler $\phi$-function. 

\begin{lem}\label{lem:eulerphi} We have $\liminf_{n\to\infty} \frac{\phi(n)}{n/\log\log{n}} = e^{-\gamma}$. 
\end{lem}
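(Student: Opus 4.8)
The plan is to exploit the identity $\phi(n)/n = \prod_{p \mid n}(1 - 1/p)$, which shows that $\frac{\phi(n)}{n/\log\log n}$ is small precisely when $n$ is divisible by many small primes. The only genuinely analytic inputs are Mertens' third theorem, $\prod_{p \le y}(1 - 1/p) = (e^{-\gamma} + o(1))/\log y$ as $y\to\infty$, together with the Chebyshev estimate $\theta(y) := \sum_{p \le y}\log p \asymp y$. I would establish the two halves $\liminf \le e^{-\gamma}$ and $\liminf \ge e^{-\gamma}$ separately.

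For the first inequality, I would test the definition on the primorials $n_y := \prod_{p \le y} p$ as $y \to \infty$. Here $\phi(n_y)/n_y = \prod_{p \le y}(1-1/p) \sim e^{-\gamma}/\log y$ by Mertens, while $\log n_y = \theta(y) \asymp y$, so that $\log\log n_y = (1+o(1))\log y$. Multiplying the two gives $\frac{\phi(n_y)}{n_y/\log\log n_y} \to e^{-\gamma}$, so the liminf is at most $e^{-\gamma}$.

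For the reverse inequality, fix an arbitrary $n$ and let $P$ be its set of prime divisors, so that $\phi(n)/n = \prod_{p \in P}(1-1/p)$ while $\log n \ge \sum_{p \in P}\log p$ (replacing $n$ by its radical leaves the product unchanged but can only shrink $\log n$, which is why primorials are extremal). An elementary rearrangement argument shows that, among all finite sets of primes with $\sum_p \log p$ bounded by a fixed quantity, the product $\prod_p(1-1/p)$ is smallest when the set is an initial segment $\{p \le y\}$: if $P$ omits a prime $q$ but contains a larger prime $q'$, then swapping $q'$ for $q$ strictly decreases both $\sum_p\log p$ and $\prod_p(1-1/p)$. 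Hence $\phi(n)/n \ge \prod_{p \le y}(1-1/p)$, where $y$ is the largest real number with $\theta(y) \le \log n$. As $n \to \infty$ we have $y \to \infty$; and from $\theta(y) \le \log n < \theta(y) + \log(2y)$ (Bertrand's postulate bounds the first prime past $y$) together with $\theta(y)\asymp y$ one gets $\log n \asymp y$, hence $\log\log n = (1+o(1))\log y$. Mertens' theorem then gives $\frac{\phi(n)}{n/\log\log n} \ge e^{-\gamma}+o(1)$ as $n\to\infty$, whence $\liminf \ge e^{-\gamma}$.

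The combinatorial core — the rearrangement inequality and the reduction to primorials — is completely elementary, so the real content, and the only possible obstacle, is the estimate of Mertens, $\prod_{p\le y}(1-1/p)\sim e^{-\gamma}/\log y$, which already encodes nontrivial information about the primes (though, famously, it can be proved by elementary means, without the prime number theorem). Once that and the Chebyshev bound $\theta(y)\asymp y$ are in hand, the remaining work is just the bookkeeping that turns the constraint $\theta(y)\le\log n$ into $\log y = (1+o(1))\log\log n$. Since Hardy and Wright record precisely this statement, in the present paper it suffices to invoke \cite[Theorem 328, p. 352]{HW08}; the above indicates how the proof would be reconstructed if desired.
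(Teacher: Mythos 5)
Your proposal is correct: the paper gives no proof of this lemma, simply quoting it as Landau's classical result on the minimal order of $\phi$ via the citation \cite[Theorem 328, p.~352]{HW08}, which is exactly the conclusion you reach. Your reconstruction (primorials for the upper bound, the swap/rearrangement reduction to primorials plus Mertens and Chebyshev for the lower bound) is the standard proof and is sound.
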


Recall that $d(n; y)$ denotes the number of divisors of $n$ not exceeding $y$. The next lemma is implicit in \cite{erdos70}.

\begin{lem}\label{lem:smalldivisors} Let $x, y \geq 2$, and let $K \geq 1$. The number of integers $n \leq x$ with $d(n; y) \geq K$ is $\ll \frac{1}{K}x\log{y}$.
\end{lem}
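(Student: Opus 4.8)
The plan is to bound the count of $n \le x$ with $d(n;y) \ge K$ by summing $d(n;y)$ over all $n \le x$ and then invoking Markov's (Chebyshev's) inequality. Specifically, the number of such $n$ is at most $\frac{1}{K}\sum_{n \le x} d(n;y)$, so it suffices to show $\sum_{n \le x} d(n;y) \ll x \log y$.

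To estimate this sum, I would interchange the order of summation: since $d(n;y) = \sum_{e \mid n,\ e \le y} 1$, we have
\[
\sum_{n \le x} d(n;y) = \sum_{\substack{e \le y \\ e \le x}} \#\{n \le x : e \mid n\} = \sum_{e \le \min(y,x)} \left\lfloor \frac{x}{e} \right\rfloor \le x \sum_{e \le y} \frac{1}{e} \ll x \log y,
\]
using the standard bound $\sum_{e \le y} 1/e \le 1 + \log y \ll \log y$ for $y \ge 2$. Combining this with the Markov inequality step gives the claimed bound $\ll \frac{1}{K} x \log y$.

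There is essentially no obstacle here; the only mild point to be careful about is that when $y > x$ one should truncate the divisor sum at $x$ rather than $y$, but this only improves the bound, and in the regime of interest ($y \le x$) it is irrelevant. I would also note that the hypothesis $K \ge 1$ is only needed so that the conclusion is not vacuous, and $x, y \ge 2$ ensures $\log y$ is bounded away from zero so that the implied constant is absolute.
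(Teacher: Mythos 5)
Your proposal is correct and matches the paper's argument: the paper likewise deduces the lemma from the first-moment estimate $\sum_{n \leq x} d(n;y) = \sum_{d \leq y} \#\{n \leq x : d \mid n\} \leq x\sum_{d\leq y} 1/d \ll x\log y$ followed by Markov's inequality. Your remarks about truncation at $\min(x,y)$ and the role of the hypotheses are fine but not needed.
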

\begin{proof} This is immediate from the first-moment estimate
	\[ \sum_{n \leq x} d(n; y) = \sum_{d \leq y}\sum_{\substack{n \leq x \\ d\mid n}}1 \leq x \sum_{d \leq y}\frac{1}{d}\ll x\log{y}. \qedhere\]
\end{proof}

The next result (easily deduced from \cite[Theorems 08--09, pp. 5--6]{HT88}; see also \cite[Exercise 04, p. 12]{HT88}) is an upper bound on the number of integers $n$ with an abnormally large number of prime factors. 

\begin{lem}\label{lem:HT2} Let $x \geq 3$. Uniformly for $0 < \kappa \leq 1.9$, the number of $n\leq x$ with $\Omega(n) > \kappa\log\log{x}$ is 
\[ \ll x/(\log{x})^{Q(\kappa)}, \quad\text{where}\quad Q(\kappa) = \kappa\log{\kappa}-\kappa+1. \]
\end{lem}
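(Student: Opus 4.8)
The plan is to establish this classical large-deviation estimate for $\Omega$ via Rankin's trick, fed by a uniform bound for the exponential moments $\sum_{n\le x}z^{\Omega(n)}$. The essential auxiliary fact is that there is an absolute constant $C$ such that
\[ \sum_{n\le x} z^{\Omega(n)} \le C\,x\,(\log x)^{z-1}\qquad\text{for all }x\ge 3\text{ and all }1\le z\le 1.9. \]
To prove this I would write $z^{\Omega(n)}=\sum_{d\mid n}g(d)$, where $g$ is the multiplicative function determined by $g(p^a)=z^{a-1}(z-1)$, and note that $g\ge 0$ because $z\ge 1$. Interchanging summation,
\[ \sum_{n\le x} z^{\Omega(n)}=\sum_{d\le x} g(d)\Big\lfloor\frac{x}{d}\Big\rfloor \le x\sum_{d\le x}\frac{g(d)}{d} \le x\prod_{p\le x}\sum_{a\ge 0}\frac{g(p^a)}{p^a} = x\prod_{p\le x}\frac{p-1}{p-z}, \]
the geometric series converging because $z<2\le p$. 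A Mertens-type computation then gives $\prod_{p\le x}\frac{p-1}{p-z}\asymp(\log x)^{z-1}$; the only Euler factor that could spoil the uniformity of the implied constant is the one at $p=2$, namely $\frac{1}{2-z}$, and it stays bounded precisely when $z$ is bounded away from $2$---which is what the hypothesis $\kappa\le 1.9$ secures.

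Granting the moment estimate, I would complete the proof by Rankin's trick. Fix $\kappa$ with $1<\kappa\le 1.9$ and take $z=\kappa$. Since $z>1$, every $n\le x$ with $\Omega(n)>\kappa\log\log x$ satisfies $z^{\Omega(n)}>z^{\kappa\log\log x}=(\log x)^{\kappa\log\kappa}$, and hence
\[ \#\{n\le x:\ \Omega(n)>\kappa\log\log x\} \le (\log x)^{-\kappa\log\kappa}\sum_{n\le x}z^{\Omega(n)} \ll x\,(\log x)^{(\kappa-1)-\kappa\log\kappa} = \frac{x}{(\log x)^{Q(\kappa)}}, \]
because $(\kappa-1)-\kappa\log\kappa=-Q(\kappa)$, and the implied constant is absolute because $\kappa\le 1.9$. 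The choice $z=\kappa$ is the optimal one: minimizing $z\mapsto(z-1)-\kappa\log z$ over $z>1$ produces the critical point $z=\kappa$ and the value $-Q(\kappa)$.

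The step I expect to demand the most care is the moment estimate, and in particular the \emph{uniformity} of its implied constant as $z$ runs over $[1,1.9]$; everything else is a single application of Rankin's inequality plus a one-variable optimization. One can also simply quote this moment bound, and the large-deviation estimate that it yields, from \cite[Theorems 08--09, and Exercise 04 on p.~12]{HT88}.
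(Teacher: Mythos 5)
Your argument is correct and is essentially the standard proof of the bound that the paper simply quotes from Hall and Tenenbaum; it is the same nonnegative-convolution-plus-Rankin device that the paper itself uses to prove Lemma \ref{lem:HT}, so there is nothing methodologically different to compare. One small remark: you prove the estimate for $1<\kappa\le 1.9$ (with $\kappa=1$ trivial because $Q(1)=0$), and this is in fact the only range in which the stated inequality can hold uniformly --- for fixed $\kappa<1$ almost all $n\le x$ satisfy $\Omega(n)>\kappa\log\log x$ while $Q(\kappa)>0$ --- and it is the only range in which the paper applies the lemma, so your silent restriction to $\kappa>1$ is harmless.
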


\begin{rmk} It is straightforward to check that the Erd\H{o}s--Ford--Tenenbaum constant $\delta$ of \eqref{eq:deltadef} satisfies $\delta = Q(1/\log{2})$. This property of $\delta$ will be important in what follows.
\end{rmk}

Write $H(x,y,z)$ for the count of $n \in [1,x]$ possessing a divisor from the interval $(y,z]$. The proof of Theorem \ref{thm:EL} requires fairly precise estimates for $H$. Conveniently, Ford \cite{ford08} has determined the order of magnitude of $H(x,y,z)$ in the complete space of parameters. His full result is somewhat complicated to state, but the next two lemmas isolate the special cases that are of interest to us (extracted from \cite[Theorem 1(v), (vi)]{ford08}). For our purposes, earlier results of Tenenbaum would also suffice (see, e.g., \cite[Theorem 21, pp. 29--30]{HT88}).

\begin{lem}\label{lem:ford1} Let $x > 10^5$. Suppose $y\geq 100$ and that $2y \leq z \leq y^2 \leq x$. Write $z = y^{1+u}$, so that $u = \log(z/y)/\log(y)$. Then
	\[ H(x,y,z) \asymp x u^{\delta} \big(\log \frac{2}{u}\big)^{-3/2}, \]
where $\delta \approx 0.08607$ is the constant defined in \eqref{eq:deltadef}.
\end{lem}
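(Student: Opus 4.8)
The plan is to deduce the estimate directly from Ford's order-of-magnitude theorem for $H(x,y,z)$, \cite[Theorem 1]{ford08}, which evaluates $H$ up to absolute constants over the whole range of parameters. First I would isolate the two relevant clauses, parts (v) and (vi) of that theorem. Together these cover all triples $(x,y,z)$ obeying the standing hypotheses $x > 10^5$, $y \ge 100$ together with $2y \le z \le y^2 \le x$; the split between (v) and (vi) is a technical matter of how $y$ sits relative to $x$ (equivalently, of the precise size of $u = \log(z/y)/\log{y}$), but once one restricts to $z \le y^2$ both clauses give a formula of the shape $x\, u^{\delta}(\log(2/u))^{-3/2}$, up to absolute constants.

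Next I would verify that our hypotheses place us squarely in that regime and that the claimed quantity is well-behaved there. The assumptions $x > 10^5$ and $y \ge 100$ are Ford's standing hypotheses. The condition $y^2 \le x$ forces $y \le \sqrt{x}$, so $x/y \ge \sqrt{x}$: there is ample room for a cofactor, and we avoid the range of large $y$ in which the order of magnitude of $H$ changes. Writing $z = y^{1+u}$, the condition $2y \le z \le y^2$ becomes $\log{2}/\log{y} \le u \le 1$; in particular $0 < u \le 1$, so $\log(2/u) \ge \log{2} > 0$, while $u \gg 1/\log{y}$ gives $\log(2/u) \ll \log\log{y} \ll \log\log{x}$. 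Hence $x\, u^{\delta}(\log(2/u))^{-3/2}$ is a positive quantity lying between $\asymp x(\log{x})^{-\delta}(\log\log{x})^{-3/2}$ and $\asymp x$; as a sanity check, at $z = 2y$ with $y \asymp \sqrt{x}$ it recovers the Erd\H{o}s--Ford--Tenenbaum estimate for $H(x,y,2y)$. I would then simply read off Ford's estimate for this sub-range, noting that the ancillary factors occurring in the fully general statement (stray powers of $\log\log$, or ratios of the shape $\log{y}/\log{x}$) are, throughout $\log{2}/\log{y} \le u \le 1$ with $y \le \sqrt{x}$, trapped between two absolute constants and so can be absorbed into the implied constants; what survives is exactly $x\, u^{\delta}(\log(2/u))^{-3/2}$. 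Alternatively one may invoke Tenenbaum's earlier estimates, \cite[Theorem 21]{HT88}, which already determine the order of magnitude throughout this range.

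The only genuine work is the bookkeeping at the two ends of the interval for $u$: one should check that the case boundary internal to Ford's theorem falls inside $(\log{2}/\log{y},\, 1)$ and that clauses (v) and (vi) agree in order of magnitude there, and that the endpoint $u = 1$ (i.e. $z = y^2$) is genuinely covered rather than sitting in a gap between Ford's clauses --- in his statement the clause for $z \le y^2$ and the clause for somewhat larger $z$ meet continuously, so no gap occurs, but this should be confirmed. No analytic input beyond Ford's (or Tenenbaum's) theorem is needed.
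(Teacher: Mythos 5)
Your proposal is correct and follows the same route as the paper: the paper offers no independent proof of this lemma, simply extracting it from Ford's Theorem 1(v),(vi) (with the remark that Tenenbaum's earlier estimates in \cite{HT88} would also suffice), which is precisely what you do, with some additional (harmless) bookkeeping on the range of $u$.
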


\begin{lem}\label{lem:ford2} Let $x > 10^5$. Suppose that $\sqrt{x} < y < z \leq x$. Suppose also that $z \geq y+1$ and $x/y \geq 1 + x/z$. Then 
	\[ H(x,y,z) \asymp H(x, x/z, x/y). \]
\end{lem}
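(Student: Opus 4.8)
The statement is a special case of Ford's determination in \cite{ford08} of the order of magnitude of $H$ over the entire parameter space, so the most efficient option is to quote \cite[Theorem 1]{ford08} directly. Here is how one would otherwise argue.

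The basic device is the complementary-divisor involution $d \mapsto n/d$: for each $n$ it matches the divisors of $n$ lying in $(y,z]$ with those lying in $[n/z,\,n/y)$. Hence an integer $n \le x$ is counted by $H(x,y,z)$ precisely when $n$ has a divisor in $[n/z,\,n/y)$, and since $y > \sqrt{x}$ both endpoints of this interval lie below $\sqrt{x}$ --- so the large-divisor count $H(x,y,z)$ has been recast as a small-divisor count. The interval $[n/z,n/y)$ has the same logarithmic length $\log(z/y)$ as the interval $(x/z,\,x/y]$ appearing in $H(x,x/z,x/y)$; the one defect is that its location drifts with $n$. To kill this drift I would split the range of $n$ into dyadic blocks $(X/2,X]$, with $X$ running over $x,\,x/2,\,x/4,\dots$ down to $\sqrt{x}$. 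On such a block $[n/z,n/y) \subseteq (X/(2z),\,X/y]$, an interval whose endpoints no longer depend on $n$ and whose ratio is $2z/y \ge 2$; so the part of $H(x,y,z)$ contributed by that block is at most $H(X,\,X/(2z),\,X/y)$. Inserting the estimate of Lemma \ref{lem:ford1} (or, for the smaller blocks and the edge ranges, the remaining cases of Ford's theorem) and using the near scale-invariance of $H$, the sum over $X$ is dominated by the top block $X \asymp x$ and is seen to be $\ll H(x,\,x/z,\,x/y)$. Running the mirror-image argument with $(y,z)$ and $(x/z,x/y)$ exchanged gives the reverse inequality, hence $H(x,y,z)\asymp H(x,x/z,x/y)$.

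The delicate point --- and the reason for the hypotheses $z \ge y+1$ and $x/y \ge 1+x/z$ --- is the bookkeeping in this reduction. One needs that $H(X,Y,Z)$ is, up to absolute constants, stable under replacing $X,Y,Z$ by bounded dilates and under replacing $Z$ by $2Z$ when $Z \ge 2Y$; these near scale-invariance and monotonicity facts are themselves cleanest to quote from \cite{ford08} (in our ranges they also follow from Tenenbaum's estimates in \cite{HT88}). On top of that, the narrow sub-range $z < 2y$, the small blocks with $X < 2z$ (where the crude bound above degenerates and must be replaced by a lower bound of matching size for $H(x,x/z,x/y)$), and the corner cases where $(y,z]$ or $(x/z,x/y]$ only just contains an integer each require separate but routine attention. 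Since assembling all of this from scratch is long while every piece is standard, in the paper I would simply invoke Ford's theorem, having already isolated in Lemma \ref{lem:ford1} the single form of his estimate that our application actually needs.
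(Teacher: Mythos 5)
Your primary route---quoting \cite[Theorem 1]{ford08} directly---is exactly what the paper does: the lemma is stated as an extraction of parts (v) and (vi) of Ford's theorem, with no independent proof given. Your supplementary sketch via the complementary-divisor involution is a reasonable outline of why the symmetry holds, but it is not needed here and the paper does not attempt it.
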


We also need some understanding of the distribution of smooth numbers. The following upper bound is contained in work of de Bruijn \cite{dB66}. Recall that $\Psi(x,y)$ denotes the number of $y$-smooth numbers $n \leq x$. 

\begin{lem}\label{lem:tensmooth} For $2 \leq y \leq x$, set $u:= \frac{\log{x}}{\log{y}}$. Whenever $y \geq (\log{x})^2$ and $u \to \infty$, we have
	\[ \Psi(x,y) \leq \exp(-(1+o(1))u\log{u}). \]
\end{lem}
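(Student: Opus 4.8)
The plan is to prove this via Rankin's trick, the standard route to de Bruijn-type upper bounds for $\Psi(x,y)$. The starting point is the elementary observation that $(x/n)^{\sigma}\geq 1$ whenever $n\leq x$ and $\sigma\geq 0$, so that for every $\sigma\in(0,1)$,
\[ \Psi(x,y)\;\leq\;\sum_{\substack{n\geq 1\\ P^{+}(n)\leq y}}\Big(\frac{x}{n}\Big)^{\sigma}\;=\;x^{\sigma}\prod_{p\leq y}\big(1-p^{-\sigma}\big)^{-1}, \]
the Euler product converging absolutely because $\sigma>0$. Taking logarithms, everything reduces to bounding $\sigma\log x+\sum_{p\leq y}\log(1-p^{-\sigma})^{-1}$ and then choosing $\sigma$ advantageously.

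First I would estimate the prime sum. From $\log\frac{1}{1-t}\leq\frac{t}{1-t}$ together with the fact that $1-p^{-\sigma}\geq 1-2^{-1/2}$ for every prime $p$ once $\sigma\geq\tfrac12$, the sum is $\ll\sum_{p\leq y}p^{-\sigma}$ in that range, and a routine partial summation against Chebyshev's bound $\pi(t)\ll t/\log t$ yields an estimate of the shape
\[ \sum_{p\leq y}p^{-\sigma}\;\ll\;\frac{y^{1-\sigma}}{(1-\sigma)\log y}+\sqrt{y^{1-\sigma}}+\log\log y. \]
Then I would make the near-optimal choice
\[ \sigma\;:=\;1-\frac{\log u}{\log y}, \]
for which $y^{1-\sigma}=u$, $(1-\sigma)\log x=u\log u$, and $(1-\sigma)\log y=\log u$. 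The hypothesis $y\geq(\log x)^{2}$ gives $u<\log x\leq\sqrt y$, hence $\log u<\tfrac12\log y$ and $\sigma>\tfrac12$ --- precisely what the previous step requires --- while $u\to\infty$ forces $(1-\sigma)\log y=\log u\to\infty$, so that the leading term $y^{1-\sigma}/((1-\sigma)\log y)=u/\log u$ dominates the middle term $\sqrt u$. Assembling the pieces,
\[ \log\Psi(x,y)\;\leq\;\log x-u\log u+O\!\Big(\frac{u}{\log u}\Big)+O(\log\log y), \]
and since $\log u\to\infty$ the first error term is $o(u\log u)$, and so is $\log\log y$ in the range of interest; this yields $\log\Psi(x,y)\leq\log x-(1+o(1))u\log u$, i.e.\ $\Psi(x,y)\leq x\exp(-(1+o(1))u\log u)$.

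The step I expect to be the main obstacle is the passage from the Euler product to a serviceable inequality: one needs the estimate for $\sum_{p\leq y}p^{-\sigma}$ to be sharp enough that, after the optimal choice of $\sigma$, every error term is genuinely $o(u\log u)$. The two hypotheses are calibrated exactly for this --- $y\geq(\log x)^{2}$ keeps $\sigma$ bounded away from $0$ (so that $\log(1-p^{-\sigma})^{-1}$ is comparable to $p^{-\sigma}$ and the Chebyshev partial summation is clean), and $u\to\infty$ keeps $(1-\sigma)\log y$ large (so that the main term of the prime sum really does dominate). The one point needing slightly more care is the $\log\log y$ contribution: it is clearly $o(u\log u)$ unless $y$ is extraordinarily large, say $y\leq\exp(\log x/\log\log x)$, and for the residual range of enormous $y$ one can instead quote de Bruijn's estimate \cite{dB66} directly. (Refining the choice to $\sigma=1-\xi(u)/\log y$, where $\xi(u)$ is the root of $e^{\xi}=1+u\xi$, would in addition recover the precise second-order shape of de Bruijn's upper bound.)
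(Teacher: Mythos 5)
The paper does not actually prove this lemma---it is quoted wholesale from de Bruijn \cite{dB66}---so your Rankin-method derivation is a genuinely different (indeed, the only) argument on offer, and it is essentially sound. Three remarks. First, the paper's statement is missing a factor of $x$ on the right-hand side (as printed, $\Psi(x,y)\le \exp(-(1+o(1))u\log u)<1$ is absurd); your computation correctly lands on $\Psi(x,y)\le x\exp(-(1+o(1))u\log u)$, which is what is intended and what is used later in the paper. Second, the caveat you raise about the $\log\log y$ term is the genuine limitation of the method, and it is worth being precise about why: the primes $p\le y^{1/\log u}=e^{1/(1-\sigma)}$ contribute $\asymp \log\log y$ to $\sum_{p\le y}p^{-\sigma}$, so no choice of $\sigma$ in Rankin's inequality can do better than $x\exp(-u\log u+c\log\log y)$; when $u\log u=o(\log\log x)$ (e.g.\ $u\asymp\sqrt{\log\log x}$, which the hypotheses permit) this is weaker than the trivial bound $\Psi(x,y)\le x$, and one is forced back to de Bruijn's iteration or the saddle-point method. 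Quoting de Bruijn for that residual range is no less rigorous than what the paper does for the entire statement, and in every application in the paper one has $y=\exp(c\log x/\log\log x)$, hence $u\asymp\log\log x$ and $\log\log y=O(u)=o(u\log u)$, so your self-contained argument covers all the cases actually needed. Third, a minor technical point: your displayed bound for $\sum_{p\le y}p^{-\sigma}$ is correct, but to obtain it you should split the summation at $y^{1/\log u}$ rather than at $\sqrt y$; splitting at $\sqrt y$ leaves a term of size $\sqrt{y^{1-\sigma}}/(1-\sigma)=\sqrt{u}\,\log y/\log u$, which need not be $o(u\log u)$.
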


\begin{proof}[Proof of Theorem \ref{thm:EL}] We may suppose that $m$ (and hence also $x$) is large, since the assertion of the theorem is trivial for bounded values of $m$. We take two cases.
\vskip 5pt
\noindent \textsc{Case 1}: For the first half of the proof, we will assume that
\begin{equation}\label{eq:smallerm} m \leq x \exp(-\log{x}/\log\log{x}). \end{equation}
	
Suppose that $T^n-1$ has a divisor of degree $m$ in $\Q[T]$. We can assume that $n$ satisfies the inequality
\begin{equation}\label{eq:smalldivs} d(n; 2m\log\log{m}) < (\log{m})^2. \end{equation}
Indeed, by Lemma \ref{lem:smalldivisors}, the number of $n \leq x$ not satisfying \eqref{eq:smalldivs} is $\ll x/\log{m}$, which is negligible for us.

Since $T^n-1$ has a divisor of degree $m$, we can choose (as in \eqref{eq:representation}) a subset $\Ss$ of the divisors of $n$ with
\begin{equation}\label{eq:mphirep} m = \sum_{d\in \Ss} \phi(d). \end{equation}
If $d \in \Ss$, then $\phi(d) \leq m$, and so Lemma \ref{lem:eulerphi} implies that $d \leq 2 m \log\log{m}$. (We use here that $m$ is large and that $e^{\gamma} < 2$.) Thus, $\#\Ss < (\log{m})^2$ by \eqref{eq:smalldivs}. But then some term on the right-hand side of \eqref{eq:mphirep} must exceed $m/(\log{m})^2$. In particular, there must be some $d \in \Ss$ with
\[ 2m\log\log{m} \geq d \geq \phi(d) > m/(\log{m})^2. \]
Hence, $n$ is counted by 
\[ \tilde{H}:= H(x, m/(\log{m})^2, 2m\log\log{m}).\] We consider three cases:
\begin{itemize}
\item If $2m\log\log{m} \leq \sqrt{x}$, we apply Lemma \ref{lem:ford1} with $y=m/(\log{m})^2$, $z=2m\log\log{m}$. In this case, $u \asymp \frac{\log\log{m}}{\log{m}}$, and we find that
\[ \tilde{H} \ll x \left(\frac{\log\log{m}}{\log{m}}\right)^{\delta} (\log\log{m})^{-3/2} \ll x/(\log{m})^{\delta}, \]
as desired. 
\item If $m/(\log{m})^2 > \sqrt{x}$, then by Lemma \ref{lem:ford2},
\[ \tilde{H} \asymp H\bigg(x, \frac{x}{2m\log\log{m}}, x \frac{(\log{m})^2}{m}\bigg). \]
Recall we are assuming that $m$ satisfies \eqref{eq:smallerm}.  Apply Lemma \ref{lem:ford1} with $y=\frac{x}{2m\log\log{m}}$ and $z= x\frac{(\log{m})^2}{m}$, so that (using \eqref{eq:smallerm})
\begin{align*} u &\asymp \frac{\log\log{m}}{\log(x/(2m\log\log{m}))}\\
	&\ll \frac{(\log\log{m})(\log\log{x})}{\log{x}} \ll \frac{(\log\log{m})^2}{\log{m}}.\end{align*} We obtain  that 
	\[ \tilde{H} \ll x \left(\frac{(\log\log{m})^2}{\log{m}}\right)^{\delta} (\log\log{m})^{-3/2} \ll x/(\log{m})^\delta. \] 
\item It remains to treat the case when $m/(\log{m})^2 \leq \sqrt{x} < 2m\log\log{m}$.  In this case, 
$\sqrt{x}/(\log\sqrt{x})^3 \leq m/(\log{m})^2$ and $2m\log\log{m} < \sqrt{x}(\log{x})^3$. Thus,
\begin{align*} \tilde{H} &= H\left(x,\frac{m}{(\log{m})^2},\sqrt{x}\right) + H(x,\sqrt{x},2m\log\log{m}) \\
&\leq H\left(x, \frac{\sqrt{x}}{(\log{\sqrt{x}})^3}, \sqrt{x}\right) + H(x,\sqrt{x}, \sqrt{x} (\log{x})^3).	
\end{align*}
Applying Lemmas \ref{lem:ford1} and \ref{lem:ford2} as above, we find that both terms on the right-hand side are $\ll x/(\log{x})^{\delta} \ll x/(\log{m})^{\delta}$.
\end{itemize}
This completes the proof of Theorem \ref{thm:EL} in the case when $m$ satisfies \eqref{eq:smallerm}. In fact, in this case we obtain the upper bound claimed in the theorem with $\delta'$ replaced by the larger number $\delta$.

\vskip 5pt
\noindent \textsc{Case 2}: Now suppose \eqref{eq:smallerm} fails, i.e., that 
\begin{equation}\label{eq:mconstraints} x \exp(-\log{x}/\log\log{x})< m \leq x. \end{equation}

Let $n\leq x$ be such that $T^n-1$ has a divisor of degree $m$. We may assume that $p=P^{+}(n)$ satisfies \begin{equation}\label{eq:largestp} P^{+}(n) > \exp(2\log{x}/\log\log{x}).\end{equation} Indeed, by Lemma \ref{lem:tensmooth} (with $u = \frac{1}{2}\log\log{x}$), the number of $n \leq x$ not satisfying  \eqref{eq:largestp} is, for large $x$,  at most
\[ \frac{x}{\exp(\frac{1}{3}\log\log{x}\log\log\log{x})} < \frac{x}{\log{x}} \leq \frac{x}{\log{m}}, \]
which is negligible.

We fix $\epsilon > 0$ (depending only on $\delta'$) so that all but $O(x/(\log{x})^{\delta'})$ natural numbers $n \leq x$ satisfy the inequality
\begin{equation}\label{eq:Omegalower} \Omega(n) \leq \left(\frac{1}{\log{2}}-\epsilon\right)\log\log{x}. \end{equation}
Since $\delta = Q(1/\log{2})$ and $\delta' < \delta$, the possibility of choosing such an $\epsilon$ follows from Lemma \ref{lem:HT2} and the continuity of the function $Q(\kappa)$ appearing in the lemma statement. In what follows, we assume that \eqref{eq:Omegalower} holds. 

Since $T^n-1$ has a divisor of degree $m$, we may take a representation of $m$ in the form \eqref{eq:mphirep}, where $\Ss$ is a set of divisors of $n$. For each $d \in \Ss$ divisible by $p$, the number $\phi(d)$ is divisible by $p-1$. So reducing \eqref{eq:mphirep} modulo $p-1$, we find that
\begin{equation}\label{eq:divstar} m \equiv \sum_{d \in \T} \phi(d) \pmod{p-1}, \quad\text{where}\quad \T := \{d \in \Ss: p \nmid d\}. \end{equation}
Notice that $\T$ consists of divisors of $r:=n/p$. Also, from \eqref{eq:largestp}, we have  \[ r \leq x/\exp(2\log{x}/\log\log{x}).\]  Moreover, recalling \eqref{eq:mconstraints}, 
\begin{align}\notag m-\sum_{d \in \T}\phi(d) &\geq m - \sum_{d \mid r}\phi(d) \geq m-r  \\
	&\geq x \exp(-\log{x}/\log\log{x}) - x\exp(-2\log{x}/\log\log{x}) > 0.\label{eq:mnonzero}
\end{align}

We now count the possibilities for $n$ by first fixing $r$ and then using the relation \eqref{eq:divstar} to count the number of possibilities for $p$ given $r$. Since $\T$ consists entirely of divisors of $r$, the number of possibilities for $\T$, given $r$, is at most
\[ 2^{d(r)} < 2^{d(n)} \leq 2^{2^{\Omega(n)}} < \exp((\log{x})^{1-\frac{1}{2}\epsilon}). \]
(We use \eqref{eq:Omegalower} in the last step.) Rewriting \eqref{eq:divstar} in the form \[ p-1 \mid \left(m- \sum_{d \in \T}\phi(d)\right), \]
we see that given $\T$, the number of possibilities for $p$ is bounded by
\[ \max_{h \leq x} d(h) < \exp(\log{x}/\log\log{x}).\]
(We use here the maximal order of the divisor function, as in \cite[Theorem 317, p. 345]{HW08}.) Since $p$ and $r$ determine $n=pr$, the number of possibilities for $n$ is 
\begin{multline*} < \frac{x}{\exp(2\log{x}/\log\log{x})} \cdot \exp((\log{x})^{1-\frac{1}{2}\epsilon}) \cdot \exp(\log{x}/\log\log{x}) \\ < \frac{x}{\exp(\frac{1}{2}\log{x}/\log\log{x})} < \frac{x}{\log{x}}, \end{multline*}
which is negligible. This completes the proof.
\end{proof}

\begin{remarks}\mbox{}
\begin{enumerate}
\item We emphasize that what makes the $\F_p[T]$-situation considered in Theorem \ref{thm:polypractical} susbtantially more difficult than the $\Q[T]$ situation considered above is that no estimate of the type asserted in Lemma \ref{lem:eulerphi} holds with $\phi$ replaced by $\ell^{\ast}$.
\item It would be desirable to have a sharp lower bound to complement the upper bound in Theorem \ref{thm:EL}. An easy adaptation of the methods of \cite{PT12} gives the following related estimate: \emph{If $3 \leq m \leq \frac{1}{2}x$, then the number of $n \in [1,x]$ for which $T^n-1$ has a divisor of each degree in $[0,m]$ is $\gg x/\log{m}$.} 
\end{enumerate}
\end{remarks}

\section{Proof of Theorem \ref{thm:polypractical}}\label{sec:practicalpoly}
\subsection{Preliminary estimates}\label{sec:prep} Throughout \S\ref{sec:practicalpoly}, we assume that $a > 1$ is a fixed integer, and we write $\ell^{*}(n)$ for the generalized order of $a$ modulo $n$. In \S\ref{sec:prep}, we collect some known results on the behavior of $\ell^{*}(n)$ and the closely associated function $\lambda(n)$. These estimates will eventually be applied to prove Lemma \ref{lem:tech}, which will be the key component of our demonstration of Theorem \ref{thm:polypractical}. 

\begin{rmk} For the rest of \S\ref{sec:practicalpoly}, we suppress any dependence of implied constants on $a$.\end{rmk}

The following lemma, due to Kurlberg and Pomerance \cite[Theorem 23]{KP05}, shows that under GRH the numbers $\ell(p)$ are usually close to $p-1$.

\begin{lem}[assuming GRH]\label{lem:KP} Uniformly for $1 \leq y \leq \log{x}$, the number of primes $p \leq x$ (not dividing $a$) for which $\ell(p) \leq p/y$ is 
	\[ \ll \frac{\pi(x)}{y} + \frac{x}{(\log{x})^2}\log\log{x}. \]
\end{lem}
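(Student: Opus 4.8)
Since this lemma is attributed to Kurlberg and Pomerance, the natural plan is to sketch how their argument runs rather than to reinvent it; the key mechanism is a character-sum / Chebotarev argument over Kummer extensions, made effective by GRH. First I would fix $y$ with $1 \le y \le \log x$ and let $q$ be a prime in the range $(x/2y, x/y]$ (or, more flexibly, run over all primes $q \le x/y$, weighting appropriately). The observation driving everything is that if $\ell_a(p) \le p/y$, then $p - 1$ has a divisor $e \le p/y$ with $a^e \equiv 1 \pmod p$; equivalently, writing $p - 1 = e f$ with $f \ge y$, one has $p \equiv 1 \pmod f$ and $a$ is an $f$-th power modulo $p$. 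So the primes to be counted are, for each $f \ge y$, among the primes $p \le x$ that split completely in the Kummer field $\Q(\zeta_f, a^{1/f})$. The first step is therefore to reduce the count of ``bad'' $p$ to $\sum_{f \ge y} \#\{p \le x : p \text{ splits completely in } \Q(\zeta_f, a^{1/f})\}$, being slightly careful because a given $p$ may be counted for several $f$ (one can restrict to $f$ prime, or to a single well-chosen $f$ for each $p$, to control this; Kurlberg--Pomerance handle the overcounting by taking $f$ to be a prime in a dyadic block).

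The second step is to apply the GRH-conditional effective Chebotarev density theorem (Lagarias--Odlyzko, in the form for Dedekind zeta functions, which is exactly the ``GRH'' hypothesis declared in the paper's introduction) to each field $K_f := \Q(\zeta_f, a^{1/f})$. This has degree $\asymp f \phi(f)$ (up to a bounded factor depending on $a$, absorbed into the implied constant per the Remark), and its discriminant is bounded polynomially in $f$, so the effective Chebotarev estimate gives
\[
\#\{p \le x : p \text{ splits completely in } K_f\} \ll \frac{\pi(x)}{[K_f:\Q]} + \sqrt{x}\,\log(f x) \ll \frac{x}{f \phi(f) \log x} + \sqrt{x}\,\log(fx).
\]
Summing the main term over $f \ge y$ (with $f$ restricted to primes, or dyadically) gives $\ll \frac{x}{y \log x} \asymp \pi(x)/y$, since $\sum_{f \ge y} 1/(f\phi(f))$ converges at rate $\asymp 1/y$ along primes. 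The error term $\sqrt{x}\log(fx)$ may only be summed over $f$ up to roughly $\sqrt{x}/(\log x)^2$ or so before it overwhelms the main term; the contribution of such small $f$ is $\ll \sqrt{x} \cdot \frac{\sqrt{x}}{(\log x)^2} \cdot \log x \cdot (\text{log factor}) \ll \frac{x}{(\log x)^2}\log\log x$, which is precisely the second term in the stated bound.

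The third step handles the remaining ``large $f$'': if $f > \sqrt{x}/(\log x)^2$, the Chebotarev error term is too large to use, but then the splitting condition $p \equiv 1 \pmod f$ already forces $f \mid p - 1 \le x$, so the number of such $p$ for a given $f$ is $\ll x/f$, and more to the point the total number of $p \le x$ admitting \emph{any} divisor $f$ of $p-1$ in the range $(\sqrt{x}/(\log x)^2, x/y]$ can be bounded by a direct sieve/divisor argument (count pairs $(p, f)$ with $f \mid p-1$, $f$ large), yielding again something of size $\ll \frac{x}{(\log x)^2}\log\log x$ or smaller. The main obstacle, and the only genuinely delicate point, is the bookkeeping in steps one and three: organizing the $f$'s (primes versus dyadic blocks), avoiding serious overcounting of a prime $p$ across many $f$, and checking that the crossover point between ``use Chebotarev'' and ``use the trivial $f \mid p-1$ bound'' can be chosen so that both regimes contribute within the claimed error $\frac{x}{(\log x)^2}\log\log x$. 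Everything else — the degree and discriminant bounds for $K_f$, the application of effective Chebotarev under GRH, the convergence of $\sum 1/(f\phi(f))$ — is routine. Since the paper only needs to \emph{cite} this result, I would in the end simply refer to \cite[Theorem 23]{KP05} and include at most a one-line indication of this Chebotarev-over-Kummer-extensions strategy.
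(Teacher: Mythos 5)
Your proposal takes the same route as the paper: the paper gives no proof of this lemma at all, merely citing \cite[Theorem 23]{KP05}, which is precisely what you conclude by doing. Your accompanying sketch of the Hooley-style mechanism behind the cited result (index divisibility $\Rightarrow$ splitting in the Kummer fields $\Q(\zeta_f, a^{1/f})$, GRH-effective Chebotarev for small $f$, trivial divisor bounds for large $f$) is a fair account of the Kurlberg--Pomerance argument, with the delicate crossover bookkeeping correctly flagged as the part one would have to take from their paper rather than reconstruct.
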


\begin{lem}[assuming GRH]\label{lem:KPcons} Let $x\geq 3$. The number of primes $p \leq x$ coprime to $a$ with $\ell(p) \leq p/(\log{p})$ is $\ll \frac{x}{(\log{x})^2}\log\log{x}$.
\end{lem}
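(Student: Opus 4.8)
The plan is to derive Lemma~\ref{lem:KPcons} directly from Lemma~\ref{lem:KP} by a dyadic decomposition of the range of primes. The point is that Lemma~\ref{lem:KP} gives a bound with a parameter $y$, and we want to apply it with $y$ essentially $\log p$; but since $y$ must be taken uniform over the primes in question, we first split the primes $p \le x$ into dyadic blocks $p \in (x/2^{j+1}, x/2^j]$.

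First I would dispose of the small primes: the primes $p \le \sqrt{x}$ (say) number $O(\sqrt{x})$, which is already $\ll x/(\log x)^2 \cdot \log\log x$, so they may be discarded without comment. For the remaining primes, write $X_j := x/2^j$ and consider $p \in (X_{j+1}, X_j]$ with $X_j > \sqrt{x}$, so that $j$ ranges over $O(\log x)$ values. For $p$ in this block, $\log p \asymp \log x$, so the condition $\ell(p) \le p/\log p$ implies $\ell(p) \le p/y_j$ where we may take $y_j \asymp \log X_j \asymp \log x$; crucially $y_j \le \log X_j$, so Lemma~\ref{lem:KP} applies with $x$ replaced by $X_j$ and $y$ replaced by $y_j$ (one should check $1 \le y_j \le \log X_j$, which holds for $x$ large since $X_j > \sqrt x$). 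This yields that the number of such $p$ is
\[ \ll \frac{\pi(X_j)}{y_j} + \frac{X_j}{(\log X_j)^2}\log\log X_j \ll \frac{X_j}{(\log x)^2} + \frac{X_j}{(\log x)^2}\log\log x \ll \frac{X_j \log\log x}{(\log x)^2}. \]
Summing over the $O(\log x)$ dyadic blocks, $\sum_j X_j \le \sum_j x/2^j \ll x$, so the total is $\ll x\log\log x/(\log x)^2$, as desired.

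The only mild subtlety — and the closest thing to an obstacle — is bookkeeping around the parameter ranges: one must ensure that in each dyadic block the choice of $y_j$ genuinely satisfies the hypothesis $y_j \le \log X_j$ of Lemma~\ref{lem:KP}, and that the lower cutoff ($p > \sqrt x$, or whatever threshold is convenient) is chosen so that $\log p$ and $\log x$ are comparable with absolute implied constants across every surviving block. Both are routine; there is no analytic difficulty beyond invoking Lemma~\ref{lem:KP}. I would also note that $\ell(p) \le p/(\log p)$ versus $\le p/\log p$ is the same condition, and that we may as well assume $x$ is large since for bounded $x$ the statement is trivial.
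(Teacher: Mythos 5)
Your argument is correct and rests on the same idea as the paper's proof: discard the primes $p \le \sqrt{x}$ and use the fact that $\log p \asymp \log x$ on the remaining range to reduce the condition $\ell(p) \le p/\log p$ to one of the form $\ell(p) \le p/y$ with $y \asymp \log x$, then invoke Lemma~\ref{lem:KP}. The dyadic decomposition is superfluous, though harmless: since Lemma~\ref{lem:KP} already counts all primes up to $x$, the paper simply observes that for $p > \sqrt{x}$ one has $\ell(p) \le p/\log p < 2p/\log x$ and applies the lemma once, at scale $x$, with $y = \tfrac12 \log x$, which yields the stated bound without any summation over blocks.
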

\begin{proof} We can restrict our attention to $p > \sqrt{x}$. Then $\ell(p) \leq p/\log{p}< 2p/\log{x}$, and the estimate follows from Lemma \ref{lem:KP} with $y=\frac{1}{2}\log{x}$.
\end{proof} 

The next lemma is a special case of a result of Gottschlich \cite[Lemma 2.3]{gottschlich12}. 

\begin{lem}\label{lem:avram} Let $\Pp$ be a set of primes. Suppose that for  certain constants $\theta_1 > 1$, $\theta_2 > 0$, the number of elements of $\Pp$ not exceeding $x$ is
	\[ \ll \frac{x}{(\log{x})^{\theta_1}} (\log\log{x})^{\theta_2}, \]
for all $x\geq 3$. Then for $x \geq 3$, the number of integers $n \leq x$ all of whose prime factors belong to $\Pp$ is also
	\[ \ll \frac{x}{(\log{x})^{\theta_1}} (\log\log{x})^{\theta_2}, \]
where the implied constant depends at most on $\Pp$ and the $\theta_i$.
\end{lem}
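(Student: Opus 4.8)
Write $\E$ for the set of positive integers all of whose prime factors lie in $\Pp$, and put $N(x):=\#\{n\le x: n\in\E\}$; the goal is to show $N(x)\ll \frac{x}{(\log x)^{\theta_1}}(\log\log x)^{\theta_2}$, and throughout I abbreviate $L(x):=\frac{x}{(\log x)^{\theta_1}}(\log\log x)^{\theta_2}$. The first step is to record that $\sum_{p\in\Pp}1/p$ converges: partial summation against the hypothesized bound for $\#\{p\in\Pp: p\le t\}$ reduces this to the convergence of $\int^{\infty}(\log u)^{\theta_2}u^{-\theta_1}\,du$, which holds precisely because $\theta_1>1$ --- this is the only place that hypothesis is used. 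Consequently $S_0:=\sum_{n\in\E}1/n=\prod_{p\in\Pp}(1-1/p)^{-1}$ is finite.

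The plan is then to prove $N(x)\ll L(x)$ by strong induction on $\lfloor x\rfloor$. First I would fix a constant $z_0$, depending only on $\Pp$ and the $\theta_i$, so large that $2^{\theta_1}\sum_{p\in\Pp,\,p>z_0}1/p\le \tfrac14$; this is legitimate by the convergence just noted. Assuming $N(t)\le A\,L(t)$ for all $3\le t<x$ (with $A$ to be pinned down at the end), I partition the $n\in\E$ with $1<n\le x$ according to the size of $P:=P^{+}(n)\in\Pp$. If $P\le z_0$, then $n$ is $z_0$-smooth, so there are at most $\Psi(x,z_0)\ll_{z_0}(\log x)^{\pi(z_0)}=o(L(x))$ such $n$. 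If $z_0<P\le\sqrt x$, write $n=Pm$ with $m\in\E$; since $\sqrt x\le x/P<x$ the inductive hypothesis bounds the number of admissible $m$ by $N(x/P)\le A\,L(x/P)\le 2^{\theta_1}A\,L(x)/P$ (using $\log(x/P)\ge\tfrac12\log x$), and summing over $P$ gives at most $2^{\theta_1}A\,L(x)\sum_{p\in\Pp,\,p>z_0}1/p\le\tfrac14A\,L(x)$ by the choice of $z_0$.

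The case $P>\sqrt x$ is the heart of the matter. Again write $n=Pm$; now the cofactor $m=n/P$ lies in $\E$ and satisfies $m<\sqrt x$, and the map $n\mapsto(P^{+}(n),n/P^{+}(n))$ is injective. The key step is to sum over the cofactor $m$ first: for each fixed $m\in\E$ the admissible primes $P$ all lie in $(\sqrt x,\,x/m]$, so there are at most $\#\{p\in\Pp: p\le x/m\}\ll L(x/m)\le 2^{\theta_1}L(x)/m$ of them --- the point being that $x/m>\sqrt x$ forces $\log(x/m)\asymp\log x$, so the full saving $(\log x)^{\theta_1}$ coming from the sparsity of $\Pp$ survives. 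Summing over all $m\in\E$ bounds the number of such $n$ by $\ll S_0\,L(x)$, i.e.\ by $B\,L(x)$ for some $B$ depending only on $\Pp$ and the $\theta_i$, and --- crucially --- \emph{not} on $A$. Combining the three classes with the trivial $n=1$ term yields $N(x)\le\tfrac14A\,L(x)+B\,L(x)+o(L(x))$, so choosing $A$ to be a sufficiently large multiple of $B$ closes the induction for all $x$ beyond a threshold depending only on $\Pp$ and the $\theta_i$; the remaining $x$ below that threshold are dispatched by enlarging $A$, using only $N(x)\le x$ and that $L(x)$ is bounded below on any compact subinterval of $[3,\infty)$.

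The step I expect to be the main obstacle is the treatment of $P>\sqrt x$. The naive estimate $\sum_{P>\sqrt x}N(x/P)\le x\sum_{p\in\Pp,\,p>\sqrt x}1/p$ saves only a factor $(\log x)^{\theta_1-1}$, which is too feeble to re-enter the induction and would leave the argument stuck one power of $\log x$ short; interchanging the order of summation --- fixing the cofactor rather than the large prime --- is exactly what recovers the missing logarithm, and it is here that $\sum_{p\in\Pp}1/p<\infty$ (hence $\theta_1>1$) is indispensable. The rest --- the choice of $z_0$, the base of the induction, and the various $o(L(x))$ error terms --- I expect to be routine bookkeeping.
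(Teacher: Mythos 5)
Your proof is correct. Note that the paper does not actually prove this lemma at all --- it is quoted as a special case of Gottschlich \cite[Lemma 2.3]{gottschlich12} --- so what you have produced is a self-contained substitute for a cited result. Your argument (decompose $n=Pm$ with $P=P^{+}(n)$, handle $P\le z_0$ by smooth-number counting, $z_0<P\le\sqrt{x}$ by induction on the cofactor using $\sum_{p\in\Pp}1/p<\infty$, and $P>\sqrt{x}$ by fixing the cofactor $m<\sqrt{x}$ and counting primes of $\Pp$ in $(\sqrt{x},x/m]$ so that the full $(\log x)^{-\theta_1}$ saving survives) is the standard way such ``multiplicatively generated sparse sets'' lemmas are proved, and each step checks out: the injectivity of $n\mapsto(P,n/P)$, the bounds $L(x/P)\le 2^{\theta_1}L(x)/P$ and $\pi_{\Pp}(x/m)\ll 2^{\theta_1}L(x)/m$, the independence of the constant $B=2^{\theta_1}S_0\cdot O(1)$ from $A$, and the closing of the induction via the coefficient $\tfrac14<1$ are all sound. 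Your self-diagnosis is also accurate --- the only genuinely delicate point is that summing $N(x/P)$ over large $P$ loses a logarithm, and reversing the order of summation in the range $P>\sqrt{x}$ is exactly the fix. The remaining issues (applying the inductive hypothesis at non-integer arguments $x/P$, monotonicity of $L$ near $t=3$, the base case) are routine, as you say.
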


\begin{lem}[assuming GRH]\label{lem:avramcons} Let $x\geq 3$. The number of $n \leq x$ all of whose prime factors $p$ either
\begin{enumerate}
	\item divide $a$, \emph{or}
	\item satisfy $\ell(p) \leq p/\log{p}$
\end{enumerate} is $\ll \frac{x}{(\log{x})^2} \log\log{x}$.
\end{lem}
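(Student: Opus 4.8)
The plan is to combine Lemma~\ref{lem:KPcons} with Lemma~\ref{lem:avram}. The set of primes $p$ that either divide $a$ or satisfy $\ell(p) \leq p/\log{p}$ is a union of two pieces: the finitely many primes dividing $a$, which contribute negligibly, and the set $\Pp$ of primes $p \nmid a$ with $\ell(p) \leq p/\log{p}$. By Lemma~\ref{lem:KPcons}, the number of elements of $\Pp$ up to $x$ is $\ll \frac{x}{(\log{x})^2}\log\log{x}$, so $\Pp$ satisfies the hypothesis of Lemma~\ref{lem:avram} with $\theta_1 = 2$ and $\theta_2 = 1$ (these are admissible since $\theta_1 = 2 > 1$ and $\theta_2 = 1 > 0$).

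First I would reduce to counting $n \leq x$ all of whose prime factors lie in $\Pp$. An integer $n$ counted by the lemma factors as $n = n_1 n_2$, where $n_1$ is built from primes dividing $a$ and $n_2$ is built from primes in $\Pp$; since there are only $O(1)$ squarefull-part constraints, it is cleanest to write $n = ab$-style and note that every prime factor of $n$ not dividing $a$ must lie in $\Pp$. More precisely, if $n$ is counted, then $n_{(a)}$ (the largest divisor of $n$ coprime to $a$) has all its prime factors in $\Pp$. Summing over the $O(1)$ possible values of $n/n_{(a)}$ that can occur up to $x$ — more carefully, over divisors $e$ of a fixed power of $\mathrm{rad}(a)$ with $e \leq x$, of which there are $O(\log x)$ — and applying Lemma~\ref{lem:avram} to bound the number of admissible $n_{(a)} \leq x/e$, one gets a bound of the shape $\sum_{e} \frac{x/e}{(\log(x/e))^2}\log\log(x/e)$. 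I would handle the ranges $e \leq \sqrt{x}$ and $e > \sqrt{x}$ separately: in the first, $\log(x/e) \asymp \log x$ and $\sum_e 1/e \ll \log\log x$ (since $e$ ranges over smooth numbers supported on the primes dividing $a$, in fact $\sum 1/e = O(1)$), giving the desired bound; in the second range the total count of such $e$ is itself $O(\log x)$ and each contributes at most $x/e \leq \sqrt x$, which is negligible.

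Actually the bookkeeping simplifies considerably: since $a$ is fixed, $\sum_{p \mid a} 1 = O(1)$ and $\sum_{e \text{ supported on } p\mid a} 1/e = \prod_{p \mid a}(1-1/p)^{-1} = O(1)$, a genuine constant. So the cleanest route is: let $n$ be counted by the lemma and write $n = de$ where $P^+(d) \mid a$ and $\gcd(e, a) = 1$; then every prime of $e$ lies in $\Pp$. For fixed $d$, Lemma~\ref{lem:avram} bounds the number of valid $e \leq x/d$ by $\ll \frac{x/d}{(\log(2x/d))^2}\log\log(3x/d) \ll \frac{x}{d(\log x)^2}\log\log x$ when $d \leq \sqrt x$, and by $\ll x/d$ trivially otherwise. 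Summing, $\sum_{d \leq \sqrt x} \frac{x}{d (\log x)^2}\log\log x \ll \frac{x}{(\log x)^2}\log\log x$ using $\sum_{d} 1/d = O(1)$, and $\sum_{d > \sqrt x} x/d \ll \sqrt x \sum_{d \text{ supp. on } p \mid a} 1 \ll \sqrt x \cdot (\log x)^{\omega(a)} = o(x/(\log x)^2)$ since there are only $O((\log x)^{\omega(a)})$ such $d$ up to $x$. This yields the claimed bound.

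I do not anticipate a serious obstacle here — this lemma is essentially a formal consequence of the two cited results, modulo the elementary manipulation of peeling off the $a$-part of $n$. The only mildly delicate point is making sure the factor $d$ coming from primes dividing $a$ does not cost anything; this is handled by the convergence of $\sum 1/d$ over integers supported on the fixed finite set of primes dividing $a$, together with the crude count of how many such $d$ lie below $x$. Everything else is a direct application of Lemma~\ref{lem:avram}.
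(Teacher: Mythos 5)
Your proof is correct and rests on the same two ingredients as the paper's: Lemma~\ref{lem:KPcons} to count the relevant primes and Lemma~\ref{lem:avram} with $\theta_1=2$, $\theta_2=1$ to pass from primes to integers. The paper's version is a one-liner because it simply puts the $O(1)$ primes dividing $a$ into the set $\Pp$ itself (which does not disturb the counting hypothesis of Lemma~\ref{lem:avram}) and applies that lemma once; your decomposition $n=de$ and the subsequent sum over $d$ supported on the primes dividing $a$ are sound but unnecessary.
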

\begin{proof} We let $\Pp$ be the set of primes $p$ dividing $a$ or satisfying $\ell(p) \leq p/\log{p}$. Since there are only $O(1)$ primes dividing $a$, Lemma \ref{lem:KPcons} shows that the hypotheses of Lemma \ref{lem:avram} are satisfied with $\theta_1=2$ and $\theta_2=1$.
\end{proof}

Finally, we recall an estimate of Friedlander, Pomerance, and Shparlinski \cite{FPS01} for the number of occurrences of small values of the Carmichael $\lambda$-function. 

\begin{lem}\label{lem:FPS} Suppose that $x$ is sufficiently large and that $\Delta \geq (\log\log{x})^3$. Then the number of $n \leq x$ with $\lambda(n) \leq n \exp(-\Delta)$ is at most 
	\[ x \exp(-0.69 (\Delta \log \Delta)^{1/3}). \]
\end{lem}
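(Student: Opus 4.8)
This is the theorem of Friedlander, Pomerance, and Shparlinski, so the plan is to retrace their argument. Denote by $N(x,\Delta)$ the quantity to be bounded. The first step is to reduce to squarefree $n$: the $n\le x$ whose powerful part exceeds $T:=\exp((\Delta\log\Delta)^{1/3})$ form an exceptional set of size $\ll x\,T^{-1/2}=x\exp(-\tfrac12(\Delta\log\Delta)^{1/3})$, which is already within the target, and for the remaining $n$ the analysis is essentially that for squarefree $n$ (the prime-power parts $p^{a-1}$ are aligned only with powers of $p$ and contribute only controllably). For squarefree $n$ one uses the valuation identity
\[ \frac{n}{\lambda(n)}=\frac{n}{\phi(n)}\cdot G(n),\qquad G(n):=\prod_{q}q^{\sum_{k\ge1}(N_{q^{k}}(n)-1)^{+}},\quad N_{d}(n):=\#\{p\mid n:\ d\mid p-1\}, \]
with $q$ running over primes and $(t)^{+}=\max\{t,0\}$; it follows from $\phi(n)=\prod_{p\mid n}(p-1)$ and $\lambda(n)=\lcm_{p\mid n}(p-1)$ by comparing $q$-adic valuations. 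Since $n/\phi(n)\ll\log\log x$ while $\Delta\ge(\log\log x)^{3}$, the factor $n/\phi(n)$ is harmless, and $\lambda(n)\le n\exp(-\Delta)$ forces $G(n)\ge\exp(0.9\Delta)$. Qualitatively, $G(n)$ measures how ``correlated'' the prime factors of $n$ are: it is large precisely when $n$ is divisible by many primes lying in a common residue class $1\pmod g$ for various moduli $g$.

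It remains to count squarefree $n\le x$ with $G(n)\ge\exp(0.9\Delta)$, which is done by splitting on $\omega(n)$. Because the $k=1$ contribution of $q=2$ already gives $G(n)\ge 2^{\omega(n)-2}$, the regime of large $\omega(n)$ must be treated on its own; choosing a threshold $W$ somewhat larger than $(\Delta\log\Delta)^{1/3}$, one uses $\#\{n\le x:\omega(n)\ge W\}\le\sum_{n\le x}2^{\omega(n)-W}\ll 2^{-W}x\log x$, and the proximity of the exponent $0.69$ to $\log 2=0.693\ldots$ suggests that it is estimates of this flavour where the constant is ultimately pinned down. When $\omega(n)<W$, every exponent $(N_{q^{k}}(n)-1)^{+}$ is below $W$, so $G(n)\ge\exp(0.9\Delta)$ forces the ``correlated part'' of $\lambda(n)$ to have size $\gg\exp(\Delta/W)=\exp\big(c\,\Delta^{2/3}/(\log\Delta)^{1/3}\big)$; from this one extracts, after a further dichotomy, either a prime $p\mid n$ with a prime power of that size dividing $p-1$ (membership in a sparse set of primes, controlled by Brun--Titchmarsh), or a set of $t\asymp(\Delta\log\Delta)^{1/3}$ primes $p_{1}<\dots<p_{t}$ dividing $n$ all lying in a common class $1\pmod g$ with $g\ge G_{0}:=\exp\big(c\,\Delta^{2/3}/(\log\Delta)^{1/3}\big)$. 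The latter event has count
\[ \ll x\sum_{g\ge G_{0}}\frac{1}{t!}\Big(\sum_{\substack{p\le x\\ g\mid p-1}}\frac1p\Big)^{t}\ \ll\ \frac{x(\log\log x)^{t}}{t!}\sum_{g\ge G_{0}}\frac{1}{\phi(g)^{t}}\ \ll\ \frac{x(\log\log x\,\log\log G_{0})^{t}}{t!\,G_{0}^{\,t-1}}, \]
and with $t\asymp(\Delta\log\Delta)^{1/3}$ one has $G_{0}^{\,t-1}\asymp\exp(\Delta)$, which outweighs the surviving factors.

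The step I expect to be the genuine obstacle is the optimization in this second part, and in particular the case where the ``mass'' $\log G(n)\asymp\Delta$ is spread thinly over many moduli $q^{k}$ rather than concentrated—there is no minimal-order inequality for the multiplicity structure of the shifted primes $p-1$ (this is the same obstruction flagged in the remark after Theorem~\ref{thm:EL}), so a single useful modulus cannot be isolated by pigeonholing alone. Handling this requires a Rankin-type estimate for $\sum_{n\le x,\ \omega(n)=W}G(n)^{s}$, expanded via $G(n)^{s}\le\prod_{p<p'\mid n}\gcd(p-1,p'-1)^{s}$ (valid for $0<s\le1$) and then bounded by a sieve/Selberg--Delange computation over the prime factors of $n$. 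The simultaneous optimization over $s$, over $W$, and over the modulus size $G_{0}$—subject to $g^{t}\le x$, $G_{0}^{\,t-1}\gtrsim e^{\Delta}$, and the loss of $(\log\log x)^{t}$ from the $t$ prime variables—is what produces the exponent $1/3$ in $(\Delta\log\Delta)^{1/3}$ and fixes the constant. Since carrying this through is exactly the content of \cite{FPS01}, we are content to quote it.
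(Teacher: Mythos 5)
The paper gives no proof of this lemma at all---it is stated as a quoted result of Friedlander, Pomerance, and Shparlinski and simply cited from \cite{FPS01}---and your proposal likewise concludes by deferring to that reference, so the two ``proofs'' coincide. Your preliminary sketch of the FPS argument (the valuation identity $n/\lambda(n)=(n/\phi(n))\prod_q q^{\sum_k(N_{q^k}(n)-1)^+}$ for squarefree $n$ and the subsequent case analysis) is a reasonable outline of what happens inside \cite{FPS01}, but it is not needed here, since the paper treats the lemma as a black box.
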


\subsection{Key lemmas}\label{sec:keylemmas}

In this section, we present several lemmas that play an important role in the proof of Theorem \ref{thm:polypractical}. The following lemma is a close cousin of Lemma \ref{lem:smalldivisors}, but the proof is somewhat more intricate. It should also be compared with Lemma \ref{lem:HT2}, which gives a sharper result but only under more restrictive hypotheses.

\begin{lem}\label{lem:HT} Let $x, y \geq 2$, and let $k \geq 1$. The number of $n \leq x$ with $\Omega(n;y) \geq k$ is $\ll \frac{k}{2^k} x \log{y}$.
\end{lem}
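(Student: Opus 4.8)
The plan is to bound the count by a weighted sum, following the same first-moment philosophy as in Lemma~\ref{lem:smalldivisors} but inserting a suitable extra factor to capture the condition $\Omega(n;y)\ge k$. First I would note that if $\Omega(n;y)\ge k$, then $2^{\Omega(n;y)}\ge 2^k$, so
\[
\#\{n\le x : \Omega(n;y)\ge k\} \le 2^{-k}\sum_{n\le x} 2^{\Omega(n;y)}.
\]
The function $n\mapsto 2^{\Omega(n;y)}$ is multiplicative, and the sum $\sum_{n\le x}2^{\Omega(n;y)}$ can be estimated by opening up the exponent: writing $2^{\Omega(n;y)}=\sum_{d\mid n,\, P^+(d)\le y,\, d\text{ powerful-free in the relevant sense}}\dots$ — more precisely, using the identity $2^{\Omega(n;y)}=\sum_{d\mid n^{(y)}} \tau(d)$-type manipulation — one reduces to counting, with multiplicity, chains of prime-power divisors all of whose primes are $\le y$. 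The cleanest route is to write $2^{\Omega(n;y)} = \sum_{ab = n^{\flat}} 1$ where the sum is suitably interpreted, or simply to use Rankin's trick.

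Concretely, the approach I expect to work best is Rankin's trick combined with the multiplicativity. For any $n$ we have $2^{\Omega(n;y)} \le \prod_{p\le y} 2^{v_p(n)}$, and hence
\[
\sum_{n\le x} 2^{\Omega(n;y)} \le x \prod_{p\le y}\left(1 + \frac{2}{p} + \frac{4}{p^2} + \cdots\right)\frac{1}{?}
\]
is not quite right because of the truncation at $x$; instead one sums $\sum_{n\le x}2^{\Omega(n;y)} \le x\sum_{n\ge 1, P^+(n)\le y} \frac{2^{\Omega(n)}}{n} = x\prod_{p\le y}\left(1-\frac2p\right)^{-1}$, which diverges for $p=2$. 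So the honest way: split $n = n_1 n_2$ with $n_1$ the $y$-smooth part and $n_2$ coprime to all primes $\le y$; then $\sum_{n\le x} 2^{\Omega(n;y)} = \sum_{n_1\le x,\, P^+(n_1)\le y} 2^{\Omega(n_1)}\,\#\{n_2\le x/n_1 : P^-(n_2)>y\} \ll \sum_{n_1,\,P^+(n_1)\le y}2^{\Omega(n_1)}\cdot \frac{x/n_1}{\log y}\cdot(\log y)$ using the sieve bound $\#\{n_2\le t: P^-(n_2)>y\}\ll t/\log y$ when $t\ge y$ (and trivially otherwise, absorbing the small-$t$ range), and then $\sum_{P^+(n_1)\le y}\frac{2^{\Omega(n_1)}}{n_1}$ still diverges. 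The resolution is to weight by $n_1^{-\epsilon}$ or, better, to observe that $\sum_{n\le x}2^{\Omega(n;y)}\ll x\log y$ directly via the Dirichlet-hyperbola/first-moment bound $\sum_{n\le x}2^{\Omega(n;y)} = \sum_{n\le x}\sum_{d\mid n} g(d)$ where $g$ is multiplicative with $\sum_{p^k}|g(p^k)|/p^k$ convergent except for controlled behavior at small primes, giving $\sum_d g(d)\lfloor x/d\rfloor$; here $\sum_{d: P^+(d)\le y}|g(d)|/d \ll \log y$. I will use this last formulation.

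The main obstacle will be organizing the convolution cleanly so that the divergent-looking Euler factor at $p=2$ (and other small primes) is tamed: the correct statement is that $2^{\Omega(n;y)}$ is the Dirichlet convolution of $1$ with a multiplicative function $g$ supported on $y$-smooth numbers with $g(p^k)$ of size roughly $1$, so that $\sum_{d\le x, P^+(d)\le y}|g(d)|/d$ is $O(\log y)$ by Mertens rather than $O(1)$, yielding $\sum_{n\le x}2^{\Omega(n;y)}\ll x\log y$; multiplying by $2^{-k}$ gives $2^{-k}x\log y$. Finally, to upgrade this to the claimed $\frac{k}{2^k}x\log y$, I would instead start from $(k+1)^{-1}\binom{\Omega(n;y)}{1}$-type weighting or simply note $\binom{\Omega(n;y)}{k}\ge 1$ when $\Omega(n;y)\ge k$ is too lossy; the factor $k$ comes out naturally if one bounds the count by $\sum_{j\ge k} (\text{number with }\Omega(n;y)=j)$ and uses a slightly sharper per-level estimate, or — most simply — applies the bound $\#\{n\le x:\Omega(n;y)\ge k\}\ll 2^{-k}x\log y$ with $k$ replaced by $k-\lceil\log_2 k\rceil$ after first discarding those $n$ with $\Omega(n;y)$ much larger than $k$, recovering the extra factor of $k$. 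I expect the write-up to hinge on getting this last polynomial-in-$k$ savings honestly, which is the only genuinely delicate point.
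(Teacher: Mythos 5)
You have the right general strategy (a first-moment/Rankin bound with a weight of the form $v^{\Omega(n;y)}$ unfolded as a Dirichlet convolution), but the pivotal claim on which your write-up rests, namely $\sum_{n \leq x} 2^{\Omega(n;y)} \ll x\log y$, is false, and the reason is exactly the divergence at $p=2$ that you notice and then talk yourself out of. With $v=2$ the multiplicative function $g$ in $2^{\Omega(n;y)}=\sum_{d\mid n}g(d)$ has $g(2^e)=2^{e-1}$, so the local sum $\sum_{e}g(2^e)/2^e$ diverges; the truncation at $d\le x$ only caps it at $O(\log x)$, not $O(1)$. Concretely, for $y=2$ one has $\Omega(n;y)=v_2(n)$ and $\sum_{n\le x}2^{v_2(n)}\asymp x\log x$, not $O(x)$. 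So the bound $2^{-k}x\log y$ you want as an intermediate step is unavailable, and your final paragraph (recovering the factor $k$ by "replacing $k$ with $k-\lceil\log_2 k\rceil$ after discarding $n$ with $\Omega(n;y)$ much larger than $k$") is not an argument — discarding those $n$ requires a bound of precisely the type being proved, and in any case it does not repair the false moment estimate.

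The paper's proof fixes both problems with one device: take $v = 2 - 1/k$ instead of $v=2$. Then the Euler factor at $p=2$ becomes $1/(2-v)=k$ (finite, and the source of the factor $k$ in the bound), the factors at odd $p\le y$ multiply out to $O(\log y)$ by Mertens, and one still has $v^k=(2-1/k)^k\gg 2^k$ for $n$ with $\Omega(n;y)\ge k$. This yields $\sum_{n\le x}v^{\Omega(n;y)}\ll kx\log y$ and hence the stated $\ll \frac{k}{2^k}x\log y$. Your proposal is missing this idea (or an equivalent regularization of the prime $2$), and without it the argument does not close.
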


\begin{rmk} Taking $y=x$, we see that the number of $n \leq x$ with $\Omega(n) \geq k$ is $\ll \frac{k}{2^k}x\log{x}$.
\end{rmk} 

\begin{proof} The proof is almost identical to that suggested in Exercise 05 of \cite[p. 12]{HT88}, but we include it for the sake of completeness. Let $v:=2-1/k$. Let $g$ be the arithmetic function determined through the convolution identity $v^{\Omega(n;y)}= \sum_{d \mid n}g(d)$. Then $g$ is multiplicative. For $e\geq 1$, we have $g(p^e) = v^e-v^{e-1}$ if $p \leq y$, and $g(p^e) = 0$ if $p > y$. Hence,
\begin{align*} \sum_{n \leq x}v^{\Omega(n; y)} &= \sum_{d \leq x}g(d)\left\lfloor \frac{x}{d}\right\rfloor \leq x \sum_{d\leq x}\frac{g(d)}{d} \\&\leq x\prod_{p \leq y}\left(1 + \frac{v-1}{p} + \frac{v^2-v}{p^2}+ \dots\right) = \frac{x}{2-v}\prod_{3 \leq p \leq y}\left(1 + \frac{v-1}{p-v}\right).
\end{align*}
Now $2-v=1/k$, and the rightmost product is
\[ \leq \exp\left(\sum_{3 \leq p \leq y}\frac{v-1}{p-v}\right) \leq \exp\left(\sum_{3 \leq p \leq y}\frac{1}{p-2}\right) \leq \exp\left(\sum_{p \leq y}\frac{1}{p} + O(1)\right) \ll \log{y}.\] Collecting our estimates, we have shown that
\[ \sum_{n \leq x} v^{\Omega(n;y)} \ll k x\log{y}. \]
But each term with $\Omega(n;y) \geq k$ makes a contribution to the left-hand side that is $\geq v^k \geq (2-1/k)^k = 2^k (1-\frac{1}{2k})^k \gg 2^k$. Thus, the number of such terms is $\ll \frac{k}{2^k} x \log{y}$.
\end{proof}

We have already noted that there is no direct analogue for $\ell^{\ast}$ of the minimal order result for $\phi$ expressed in Lemma \ref{lem:eulerphi}. The following result is a partial workaround.

\begin{lem}[assuming GRH]\label{lem:tech} Fix $\theta$ with $0 < \theta \leq \frac12$. Suppose that $3 \leq y \leq x$. The number of integers $n \leq x$ which have a divisor $d > y$ satisfying 
\[ \ell^{\ast}(d) \leq d/\exp(4(\log{d})^{\theta}) \]
is $\ll_{\theta} x \log\log{y}/(\log{y})^{\theta}$.
\end{lem}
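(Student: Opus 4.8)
The plan is to show that possession of a divisor $d>y$ with $\ell^{*}(d)\le d\exp(-4(\log d)^{\theta})$ (call such $d$ \emph{bad}) forces $n$ into one of a bounded number of sparse families, and to bound each family \emph{directly}, through estimates of the shape $\#\{n\le x:g\mid n\text{ for some admissible }g\}\le x\sum_{g}1/g$ with $g$ ranging over structured integers; summing $1/d$ over bad divisors $d$ instead would cost a spurious factor $\log x$, which is fatal when $x$ dwarfs $y$. Throughout we may assume $x$, hence $y$, exceeds any prescribed constant depending on $\theta$, since otherwise $\log\log y/(\log y)^{\theta}$ is bounded below and the trivial bound $\le x$ suffices.

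Call a prime $p$ \emph{good} if $p\nmid a$ and $\ell(p)>p/\log p$, and \emph{bad} otherwise. By Lemmas \ref{lem:KPcons}, \ref{lem:avram}, and \ref{lem:avramcons} (where GRH is used), the integers up to $z$ divisible by no good prime number $\ll z(\log z)^{-2}\log\log z$; hence by partial summation those exceeding $y$ have reciprocal sum $\ll\log\log y/\log y$, and the $n\le x$ admitting a bad divisor $d>y$ built only from bad primes number $\ll x\log\log y/\log y$, which is admissible as $\theta\le 1$. So we may assume our bad $d$ has a good prime factor; let $P$ be the largest. Since $\ell^{*}(d)=\ell(d_{(a)})$ is a multiple of $\ell(P)>P/\log P$, badness forces $P<d\exp(-3(\log d)^{\theta})$, so the good part of $d$ is a definite amount smaller than $d$.

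To quantify this I would write $m=d_{(a)}$ and split
\[ \frac{d}{\ell^{*}(d)}=\frac{d}{m}\cdot\frac{m}{\phi(m)}\cdot\frac{\phi(m)}{\lambda(m)}\cdot\frac{\lambda(m)}{\ell(m)}, \]
estimating the last factor via the elementary inequality $\lambda(m)/\ell(m)\le\prod_{p^{e}\|m}\lambda(p^{e})/\ell(p^{e})$ (valid because, for each prime $q$, $\max_{i}v_{q}(\lambda_{i})-\max_{i}v_{q}(\ell_{i})\le\sum_{i}(v_{q}(\lambda_{i})-v_{q}(\ell_{i}))$, dropping all but the maximizing term) together with $(p-1)/\ell(p)<\log p$ for good $p$. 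Absorbing the factor $m/\phi(m)\ll\log\log x$ (negligible except in the corner discussed below), the hypothesis $d/\ell^{*}(d)\ge\exp(4(\log d)^{\theta})$ then forces at least one of: (i) the $a$-part of $d$ exceeds $\exp(c(\log y)^{\theta})$; (ii) the powerful part of $d$ exceeds $\exp(c(\log y)^{\theta})$; (iii) the bad-prime part of $d$ exceeds $\exp(c(\log y)^{\theta})$; (iv) $\phi(d_{(a)})/\lambda(d_{(a)})\ge\exp(c(\log y)^{\theta})$; or (v) $\prod_{p\|d_{(a)},\ p\ \mathrm{good}}(p-1)/\ell(p)\ge\exp(c(\log y)^{\theta})$. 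Cases (i)--(iii) are routine: $a$-smooth integers have a rapidly convergent reciprocal sum (Rankin's trick), powerful numbers up to $z$ number $O(\sqrt z)$, and bad-smooth integers up to $z$ number $\ll z(\log z)^{-2}\log\log z$ (Lemmas \ref{lem:KPcons}--\ref{lem:avram}); in each, the $n\le x$ divisible by such an integer that is $\ge\exp(c(\log y)^{\theta})$ number $\ll_{\theta}x\log\log y/(\log y)^{\theta}$. Case (iv) is precisely the $\phi$-versus-$\lambda$ phenomenon and is handled by Lemma \ref{lem:FPS}.

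The genuinely hard case is (v). Here I would stratify the good primes further by the size of $(p-1)/\ell(p)$, invoking Lemma \ref{lem:KP} again: for a threshold $C\le\log x$ the primes $p\le z$ with $(p-1)/\ell(p)>C$ form a proportion $\ll1/C$ (plus a negligible GRH error), so integers built from them are correspondingly sparse (Lemma \ref{lem:avram}). A large product in (v) then means either that $d$ contains a sizeable sub-product of primes drawn from such a sparse set---handled by the previous methods---or that $d$, and hence $n$ (since $\Omega(n)\ge\Omega(d)$), has abnormally many prime factors, controlled by Lemma \ref{lem:HT}; balancing $C$ against the number of admitted prime factors is what pins down the final exponent $\theta$ and, I expect, caps it at $\tfrac12$. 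I anticipate two genuine difficulties: carrying out the balancing in case (v) carefully enough to reach the stated exponent, and phrasing every estimate through $\log y$ rather than $\log x$ so as to stay uniform when $x$ is enormous relative to $y$---the latter corner (equivalently, $d$ very large) may well require a supplementary argument that bounds $\lambda(d)$ directly via Lemma \ref{lem:FPS} while bounding $\lambda(d)/\ell^{*}(d)$ via Lemma \ref{lem:KP}.
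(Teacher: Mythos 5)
Your overall architecture is the paper's: the same good/bad prime dichotomy (via Lemmas \ref{lem:KPcons}--\ref{lem:avramcons}), a multiplicative decomposition of $d/\ell^{\ast}(d)$ into an $a$-part, a powerful part, a bad-prime part, a $\lambda$-versus-$\ell$ part, and a $\lambda$-defect part handled by Lemma \ref{lem:FPS}, with Lemma \ref{lem:HT} controlling $\Omega$. But the two places you flag as unresolved are exactly where the argument as written either fails or needs no new idea. First, your ``genuinely hard case'' (v) is not hard and requires no stratification or balancing: once you have restricted to good primes (so $\iota(p)=(p-1)/\ell(p)<\log p\le\log d$ for every $p$ occurring) and to divisors with $\Omega(d)<10\log\log d$, the product in (v) is at most $(\log d)^{10\log\log d}=\exp(O((\log\log d)^2))=\exp(o((\log d)^{\theta}))$, so case (v) simply cannot occur for $d>y$ with $y$ large. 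Your guess that a balancing here caps $\theta$ at $\tfrac12$ is also off: in the paper the cap comes from the squarefull-part step (the bound $O(\sqrt t)$ on squarefull numbers up to $t$, with threshold $\log y$, yields an exceptional set of size $x/(\log y)^{1/2}$).

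Second, the uniformity issue you defer is a genuine gap in the setup you propose. Phrasing case (iv) with the fixed threshold $\exp(c(\log y)^{\theta})$ does not work when $x$ is enormous relative to $y$: Lemma \ref{lem:FPS} with fixed $\Delta=c(\log y)^{\theta}$ requires $\Delta\ge(\log\log t)^3$, which fails for $t$ near $x$, and even where it applies the density bound does not improve as $t$ grows, so the partial summation $\sum_{e}1/e$ over $e\le x$ leaks a factor $\log x$ --- precisely the loss you correctly identify as fatal. The fix is to make the threshold depend on $e$ itself: take the exceptional set to be $\{e:\lambda(e)\le e/\exp((\log e)^{\theta})\}$, whose counting function is $\ll t/(\log t)^2$ uniformly, so that the reciprocal sum over $e>y$ is $\ll 1/\log y$; this is consistent with the conclusion, whose exponent is $(\log d)^{\theta}$, not $(\log y)^{\theta}$. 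The same issue infects your appeal to Lemma \ref{lem:HT}: bounding the $n$ with $\Omega(n)$ large costs a factor $\log x$, so one must instead bound $\Omega(n;z)$ for $z$ running over the doubly exponential scale $y^{2^{j+1}}$ matched to the size of the offending divisor $d$, and sum the resulting bounds $\ll 2^{-5j}x/(\log y)^5$ over $j$. With these two repairs your plan becomes the paper's proof.
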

\begin{proof} Throughout the argument, we suppress the dependence of implied contants on $\theta$. We may assume always that $y$ is large, since the lemma is trivial for bounded values of $y$. For real $t\geq 1$, define the three sets
\begin{align*} \E_1(t)&:=\{e \leq t: e \text{ squarefull}\}, \\
 \E_2(t)&:=\{e \leq t: p \mid e\Rightarrow (p\mid a \text{ or }\ell(p) \leq p/\log{p})\}, \\
 \E_3(t)&:=\{e\leq t: \lambda(e) \leq e/\exp((\log{e})^{\theta})\}. \end{align*}
We set $e_1$, $e_2$, and $e_3$ equal to the largest divisors of $n$ from the three sets $\E_1, \E_2, \E_3$, respectively. We start by showing that we can assume each of the following inequalities:
\begin{align}
	e_1 &\leq \log{y}, \label{eq:e1}\\
	e_2 &\leq \exp((\log{y})^{\theta}), \label{eq:e2} \\
	e_3 &\leq y.\label{eq:e3}
\end{align}

It is easy to dispense with \eqref{eq:e1}. Indeed, by partial summation and the well-known estimate $\#\E_1(t) \ll \sqrt{t}$, the number of $n \leq x$ with a squarefull divisor larger than $\log{y}$ is $\ll x/(\log{y})^{1/2}$. This is acceptable for us, since $\theta \leq \frac12$. To see that we can assume \eqref{eq:e2}, note that the number of exceptional values of $n\leq x$ is at most
\begin{align*} x \sum_{\substack{e > \exp((\log{y})^{\theta}) \\ e\in\E_2(x)}}\frac{1}{e} &\leq x\left(\frac{\#\E_2(x)}{x}+ \int_{\exp((\log{y})^{\theta})}^{x} \frac{\#\E_2(t)}{t^2}\,dt\right) \\
	&\ll \frac{x}{(\log{x})^2}\log\log{x} + \frac{x}{(\log{y})^{\theta}}\log\log{y} \ll \frac{x}{(\log{y})^{\theta}} \log\log{y},
\end{align*}
where we have used the estimate of Lemma \ref{lem:avramcons} for $\#\E_2$.

It remains to justify the assumption \eqref{eq:e3}. We first estimate the counting function $\#\E_3(t)$. If $e$ is counted by $\#\E_3(t)$, then either $e \leq\sqrt{t}$ or $\lambda(e) \leq e/\exp((\log{\sqrt{t}})^{\theta})$. Lemma \ref{lem:FPS}, with $x=t$ and $\Delta=(\log\sqrt{t})^{\theta}$, thus implies that for large $t$,
\[ \#\E_3(t) \ll \sqrt{t} + t \exp(-(\log{t})^{\theta/3}) \ll t/(\log{t})^2. \]
Consequently, the number of $n \leq x$ with a divisor $e>y$ belonging to $\E_3$ is 
\[ \ll x \sum_{\substack{e > y \\ e \in \E_3(x)}}\frac{1}{e} \leq x \left(\frac{\#\E_3(x)}{x} + \int_{y}^{x}\frac{\#\E_3(t)}{t^2}\,dt\right) \ll \frac{x}{\log{y}}, \]
which is negligible for us. 

In addition to the conditions \eqref{eq:e1}--\eqref{eq:e3}, we may also suppose that $n$ does not have any divisor $d > y$ with $\Omega(d) \geq 10 \log\log{d}$. To see this, suppose for the sake of contradiction that $d$ is such a divisor. In the case when $d> x^{1/2}$, this implies that
\[ \Omega(n) \geq \Omega(d) \geq 10\log\log{d} \geq 9\log\log{x}. \]
But the number of $n \leq x$ with $\Omega(n) \geq 9\log\log{x}$ is $\ll x/(\log{x})^5$ by Lemma \ref{lem:HT}, and this is negligible for us. If $d \leq \sqrt{x}$, we can choose an integer $j \geq 0$ with 
\[ y^{2^j} < d \leq y^{2^{j+1}} \leq x. \]
Then with $z = y^{2^{j+1}}$, we have
\[ \Omega(n; z) \geq \Omega(d) \geq 10\log\log{d} \geq 10\log\log(z^{1/2}) \geq 9\log\log{z}, \]
and by Lemma \ref{lem:HT} again, the number of such $n \leq x$ is
\[ \ll \frac{x}{(\log{z})^5} \ll 2^{-5j} \frac{x}{(\log{y})^5}.\]
Summing over $j$, we see that the number of possible values of $x$ that can arise this way is $\ll x/(\log{y})^5$, which is acceptable.

We will show that for all values of $n$ which remain, every divisor $d > y$ of $n$ satisfies
\begin{equation}\label{eq:desired} \ell^{\ast}(d) > d/\exp(4(\log{d})^{\theta}). \end{equation}
From the last paragraph, we have
\[ \Omega(d) < 10\log\log{d}. \]
Put $d=d_1 d_2 q$, where $d_1$ is the largest divisor of $n$ from $\E_1$ and $d_2$ is the largest divisor of $d/d_1$ from $\E_2$. Then $q$ is squarefree and relatively prime to $a$, and $\ell(p) > p/\log{p}$ for every prime $p$ dividing $q$. Moreover,
\begin{equation}\label{eq:d1bound} d_1 \leq e_1 \leq \log{y} \leq \log{d} \end{equation}
and 
\begin{equation}\label{eq:d2bound} d_2 \leq e_2 \leq \exp((\log{y})^{\theta}) \leq \exp((\log{d})^{\theta}). \end{equation}
Since $d > y$ but $e_3 \leq y$, it follows that  $d \not \in \E_3$, and so
\begin{equation}\label{eq:lambdalower} \lambda(d) > d/\exp((\log{d})^{\theta}). \end{equation}
Because $d=d_1 d_2 q$ with $d_1$, $d_2$, and $q$ supported on disjoint sets of primes,
\[ \lambda(d) = \mathrm{lcm}[\lambda(d_1), \lambda(d_2), \lambda(q)] \leq \lambda(q) d_1 d_2. \]
Hence, estimates \eqref{eq:d1bound}, \eqref{eq:d2bound}, and \eqref{eq:lambdalower} yield
\begin{equation}\label{eq:lambdaqlower} \lambda(q) \geq \frac{\lambda(d)}{d_1 d_2} \geq \frac{d}{\exp((\log{d})^{\theta})} (\log{d})^{-1} \exp(-(\log{d})^{\theta}) > \frac{d}{\exp(3(\log{d})^{\theta})}. \end{equation}
For each prime $p$ dividing $q$, write $p-1 = \ell(p)\iota(p)$, so that $\iota(p)$ is the index of the subgroup of $\F_p^{\times}$ generated by $a$; from the definition of $q$,
\[ \iota(p) < \frac{p}{\ell(p)} \leq \log{p} \leq \log{d} \]
for all $p$ dividing $q$. Also,
\[ \ell(q) = \lcm_{p \mid q}[\ell(p)] = \lcm_{p \mid q}\left[\frac{p-1}{\iota(p)}\right] \geq \frac{\lcm_{p \mid q}[p-1]}{\prod_{p \mid q}\iota(p)} = \frac{\lambda(q)}{\prod_{p \mid q} \iota(p)}.\]
Thus, from \eqref{eq:lambdaqlower}, the bound $\iota(p) \leq \log{d}$, and the inequality $\omega(q)\leq \Omega(d) < 10\log\log{d}$,
\begin{align*} \ell(q) &\geq \frac{d}{\exp(3(\log{d})^{\theta})} \bigg(\prod_{p \mid q} \iota(p)\bigg)^{-1} 
\\&\geq \frac{d}{\exp(3(\log{d})^{\theta})} (\log{d})^{-10\log\log{d}} > \frac{d}{\exp(4(\log{d})^{\theta})}. \end{align*}
Since $q$ is a divisor $d$ that is coprime to $a$, we have that $\ell^{\ast}(d) \geq \ell(q)$, and so
\eqref{eq:desired} holds. This completes the proof of the lemma.
\end{proof}

We also need a simple observation concerning the behavior of the function $\phi/\ell^{\ast}$ along the divisor lattice (compare with \cite[Lemma 2]{FPS01}).

\begin{lem}\label{lem:monotone} If $d$ and $e$ are natural numbers for which $d \mid e$, then $\frac{\phi(d)}{\ell^{\ast}(d)} \mid \frac{\phi(e)}{\ell^{\ast}(e)}$.
\end{lem}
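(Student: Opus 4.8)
The plan is to reduce to the case in which $e$ is obtained from $d$ by multiplying by a single prime, and then to dispatch that case by hand using only elementary facts about multiplicative orders. First I would note that both quantities in the statement are genuine integers: since $\ell^{\ast}(n) = \ell_a(n_{(a)})$ divides $\phi(n_{(a)})$, which in turn divides $\phi(n)$, the quotient $\phi(n)/\ell^{\ast}(n)$ lies in $\Z$. Next, writing the prime factorization of $e/d$ with multiplicity as $e/d = q_1 q_2 \cdots q_k$ and putting $n_0 := d$ and $n_i := n_{i-1}q_i$, I get a chain $d = n_0 \mid n_1 \mid \cdots \mid n_k = e$ in which each $n_i$ divides $e$ and each step multiplies the previous term by one prime. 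Since divisibility is transitive, it then suffices to prove that $\frac{\phi(n)}{\ell^{\ast}(n)} \mid \frac{\phi(np)}{\ell^{\ast}(np)}$ for every positive integer $n$ and every prime $p$.

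For this one-prime step I would split into three cases. If $p \mid a$, then $(np)_{(a)} = n_{(a)}$, so $\ell^{\ast}(np) = \ell^{\ast}(n)$, while $\phi(n) \mid \phi(np)$ in all cases, and the claim is immediate. If $p \nmid a$ but $p \mid n$, then $\phi(np) = p\,\phi(n)$, and invoking the classical fact that $\ell_a(p^{j+1})$ equals either $\ell_a(p^j)$ or $p\,\ell_a(p^j)$, one obtains $\ell^{\ast}(n) \mid \ell^{\ast}(np) \mid p\,\ell^{\ast}(n)$; in the first possibility $\phi(np)/\ell^{\ast}(np) = p\cdot \phi(n)/\ell^{\ast}(n)$ and in the second $\phi(np)/\ell^{\ast}(np) = \phi(n)/\ell^{\ast}(n)$, so we are done either way.

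The remaining case is $p \nmid a$ and $p \nmid n$. Here $\phi(np) = (p-1)\phi(n)$ and $(np)_{(a)} = p\,n_{(a)}$ with $p \nmid n_{(a)}$, whence $\ell^{\ast}(np) = \lcm[\ell_a(p), \ell^{\ast}(n)]$. Writing $L := \ell^{\ast}(n)$ and $\ell := \ell_a(p)$, so that $\ell \mid p-1$, the ratio of the two sides is
\[ \frac{\phi(np)/\ell^{\ast}(np)}{\phi(n)/\ell^{\ast}(n)} = \frac{(p-1)\,L}{\lcm[\ell,L]} = \frac{p-1}{\ell}\cdot \gcd(\ell, L), \]
which is a positive integer because $\ell = \ell_a(p)$ divides $p-1$. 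Hence $\frac{\phi(n)}{\ell^{\ast}(n)} \mid \frac{\phi(np)}{\ell^{\ast}(np)}$, which completes the one-prime step and therefore the lemma.

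I do not anticipate any serious obstacle here; the content is entirely elementary. The only points requiring a moment's care are the prime-power order identity $\ell_a(p^{j+1}) \in \{\ell_a(p^j),\, p\,\ell_a(p^j)\}$ used in the second case (a standard lifting-the-exponent computation), and the $\lcm$–$\gcd$ bookkeeping in the third case.
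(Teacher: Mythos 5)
Your proof is correct and follows essentially the same route as the paper's: reduce to adjoining a single prime $p$, then split according to whether $p\mid a$, $p\mid n$, or neither, using $\ell_a(p^{j+1})\mid p\,\ell_a(p^j)$ in the second case and $\ell_a(p)\mid p-1$ in the third. The only cosmetic difference is that the paper states the one-prime step as the equivalent divisibility $\frac{\ell^{\ast}(qd)}{\ell^{\ast}(d)}\mid\frac{\phi(qd)}{\phi(d)}$, whereas you compute the ratio of the two sides directly.
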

\begin{proof} By iteration, it suffices to treat the case when $e=qd$, where $q$ is a prime. We will prove the equivalent result that, in this case,
\begin{equation}\label{eq:divrelation} \frac{\ell^{\ast}(qd)}{\ell^{\ast}(d)} \mid \frac{\phi(qd)}{\phi(d)}. \end{equation}
We can assume that $q \nmid a$, since otherwise the left-hand ratio is $1$ and \eqref{eq:divrelation} holds trivially. We consider two cases, depending on whether or not $q$ divides $d$. If $q \nmid d$, then 
\[ \ell^{\ast}(qd) = \lcm[\ell(q), \ell^{\ast}(d)] \mid \lcm[q-1, \ell^{\ast}(d)]\mid (q-1)\ell^{\ast}(d). \]
Hence, 
\[ \frac{\ell^{\ast}(qd)}{\ell^{\ast}(d)}\mid q-1 = \frac{\phi(qd)}{\phi(d)}, \] i.e., \eqref{eq:divrelation} holds. Now suppose that $q \mid d$. Write $d = q^k d'$, where $q \nmid d'$. Then $\ell^{\ast}(qd) = \lcm[\ell(q^{k+1}), \ell^{\ast}(d')]$. Since $a^{\ell(q^k)}\equiv 1 \pmod{q^k}$, we have $a^{q \ell(q^k)}\equiv 1\pmod{q^{k+1}}$, and so $\ell(q^{k+1}) \mid q \ell(q^k)$. Thus,
\[ \ell^{\ast}(qd) = \lcm[\ell(q^{k+1}), \ell^{\ast}(d')] \mid \lcm[q \ell(q^k), \ell^{\ast}(d')] \mid q \lcm[\ell(q^k), \ell^{\ast}(d')] = q \ell^{\ast}(d), \]
which gives \eqref{eq:divrelation} in this case, noting that $\phi(qd)/\phi(d)=q$.
\end{proof}

\subsection{Completion of the proof of Theorem \ref{thm:polypractical}}\label{sec:proofpolypractical}

Throughout this section, we take $a=p$, where $\F_p$ is the field for which we are proving Theorem \ref{thm:polypractical}. Thus, $\ell^{\ast}(d)$ denotes the generalized order of $p$ modulo $d$. We continue to suppress the dependence of implied constants on $a$.

\begin{proof}[Proof of Theorem \ref{thm:polypractical}] We can always assume that $m$ is larger than any convenient constant (depending on $p$), since the theorem is trivial for bounded values of $m$. 
\vskip 5pt
\noindent \textsc{Case 1}: We suppose that
\begin{equation}\label{eq:msmallrange} 3\leq m \leq x \exp(-\log{x}/\log\log{x}). \end{equation}

Suppose that $T^n-1$ has a divisor of degree $m$ in $\F_p[T]$. By Lemma \ref{lem:tech}, with $y = m$ and $\theta = 0.079$, we may assume that every divisor $d$ of $n$ with $d > m$ satisfies 
\[ \ell^*(d) > d/\exp(4(\log d)^{0.079}); \] indeed, the number of exceptional $n$ is $O(x \log \log m/(\log m)^{0.079})$, which is small relative to our target upper bound. Since $m$ appears as the degree of a divisor of $T^n-1$, we can write \begin{equation}\label{eq:pprac} m = \sum_{d \mid n} \ell^*(d) a_d,\end{equation} where each $a_d$ satisfies $0 \leq a_d \leq \frac{\varphi(d)}{\ell^*(d)}.$ For each $d$ with $a_d > 0$, we have $\ell^*(d) \leq m.$ So, either $d \leq m$ or, by \eqref{eq:pprac}, we have 
\begin{equation}\label{eq:tocontradict}
d/\exp(4(\log d)^{0.079}) < \ell^*(d) \leq m.\end{equation}
The inequalities \eqref{eq:tocontradict} force $d < M$, where \[ M := m \exp(5(\log m)^{0.079}).\] Indeed, if we were to have $d > M$, then  \begin{align*} m \geq d/\exp(4(\log d)^{0.079}) &> M/\exp(4(\log M)^{0.079}) \geq \frac{m \exp(5(\log m)^{0.079})}{\exp(5(\log m)^{0.079})} = m,\end{align*} contradicting \eqref{eq:tocontradict}. Of course, if $d \leq m$, then it is also the case that $d \leq M$. So $d \leq M$ in any case.

Lemma \ref{lem:smalldivisors} allows us to assume that $d(n; M) < (\log m)^2$, since the exceptional set has size $O(x/\log{m})$. 
Referring back to \eqref{eq:pprac}, we see that there is a divisor $d$ of $n$ with $\ell^*(d)a_d \geq m/(\log m)^2.$ But $\ell^*(d) a_d \leq \varphi(d) < d,$ so $d > m/(\log m)^2.$ Therefore, $n$ has a divisor in the interval $(m/(\log m)^2, M]$ and so is counted by $H(x, m/(\log{m})^2, M)$.
We estimate the number of such $n \leq x$ using Lemmas \ref{lem:ford1} and \ref{lem:ford2}. 
As in the proof of Theorem \ref{thm:EL}, there are three cases to consider:

\begin{itemize}
\item If $M \leq \sqrt{x}$, we apply Lemma \ref{lem:ford1} directly, with $y = m/(\log m)^2$ and $z = M$. Then $\log(z/y) \asymp (\log m)^{0.079}$. On the other hand, $\log y \asymp \log m$. Thus, $u = \frac{\log(z/y)}{\log y} \asymp (\log m)^{-0.921}$. By Lemma \ref{lem:ford1}, \[ H(x, y, z) \asymp x u^\delta \left(\log \frac{2}{u}\right)^{-3/2} \ll \frac{x}{(\log m)^{0.921\delta}}\ll \frac{x}{(\log m)^{0.079}}.\]

\item If $\sqrt{x} < m/(\log m)^2$,  Lemma \ref{lem:ford2} gives $H\left(x, \frac{m}{(\log m)^2}, M\right) \asymp H\left(x, \frac{x}{M}, \frac{x}{m/(\log m)^2}\right)$. Now set  $y=x/M$ and $z=\frac{x}{m/(\log{m})^2}$. 
We are assuming  that $m \leq x^{1 - \frac{1}{\log \log x}}$, and so 
\[ \log{y}=  \log\frac{x}{M} = \log\left(\frac{x/m}{e^{5(\log m)^{0.079}}}\right) \gg \frac{\log x}{\log \log x}. \] Since $z/y = M(\log{m})^2/m < \exp(6(\log{m})^{0.079})$, we see that 
\[ u = \frac{\log{(z/y)}}{\log{y}} \ll \frac{(\log{m})^{0.079}}{\log x/\log\log{x}} \ll \frac{\log \log x}{(\log x)^{0.921}}. \]
So by Lemma \ref{lem:ford1}, \begin{align*} H\left(x, y,z \right) \ll x\left(\frac{\log \log x}{(\log x)^{0.921}}\right)^\delta (\log \log x)^{-3/2} \ll \frac{x}{(\log m)^{0.079}}.\end{align*} 

\item If $\frac{m}{(\log m)^2} \leq \sqrt{x} < M$, then we certainly have $\frac{\sqrt{x}}{\exp({6(\log \sqrt{x})^{0.079}})} \leq \frac{m}{(\log m)^2}$ and $M \leq \sqrt{x}\exp({6(\log x)^{0.079}})$. Thus, \begin{align} H\left(x, \frac{m}{(\log m)^2}, M\right) & = H\left(x, \frac{m}{(\log m)^2}, \sqrt{x}\right) + H\left(x, \sqrt{x}, M\right)\notag \\ &\label{eq:smallmfinal} \leq H\left(x, \frac{\sqrt{x}}{\exp(6 (\log \sqrt{x})^{0.079})}, \sqrt{x}\right) + H\left(x, \sqrt{x}, \sqrt{x}\exp(6(\log x)^{0.079})\right).\end{align} We may now apply Lemmas \ref{lem:ford1} and \ref{lem:ford2} as in the previous two cases to show that each term on the right-hand side of \eqref{eq:smallmfinal} is $O(x/(\log x)^{0.079}).$ 
\end{itemize}
This completes the proof of the Theorem \ref{thm:polypractical} in the case when $m$ satisfies \eqref{eq:msmallrange}.
\vskip 5pt
\noindent \textsc{Case 2}: We now suppose instead that \[ x \exp(-\log{x}/\log\log{x}) < m \leq x. \] 

Let $q= P^{+}(n)$. We will assume that $q > \exp(2\log{x}/\log\log{x})$; by Lemma \ref{lem:tensmooth}, this introduces an exceptional set of size $\ll x/\log{x}$, which is acceptable for us. Since $m$ appears as the degree of a divisor of $T^n-1$, we may write
\begin{equation}\label{eq:mrep} m = \sum_{d \mid n} a_d \ell^{\ast}(d), \quad\text{where each}\quad 0 \leq a_d \leq \frac{\phi(d)}{\ell^{\ast}(d)}. \end{equation}
Now consider \eqref{eq:mrep} modulo $\ell^{\ast}(q)$. Whenever $q \mid d$, we have $\ell^{\ast}(q) \mid \ell^{\ast}(d)$. So mod $\ell^{\ast}(q)$, the only divisors which contribute to the sum in \eqref{eq:mrep} are those $d$ not divisible by $q$, and all of these $d$ divide $r:=n/q$. Consequently,
\begin{equation}\label{eq:ourdiv} \ell^{\ast}(q) \mid \left(m - \sum_{d \mid r} a_d \ell^{\ast}(d)\right). \end{equation}
The right-hand side of relation \eqref{eq:ourdiv} is (cf. \eqref{eq:mnonzero}) at least
\[ m - \sum_{d \mid r} \phi(d) = m-r \geq x \exp(-\log{x}/\log\log{x}) - x\exp(-2\log{x}/\log\log{x}) > 0. \]
As in the proof of Case 2 of Theorem \ref{thm:EL}, our strategy will be to count, for each fixed $r$, the number of possibilities for $q$ allowed by \eqref{eq:ourdiv}. Since $q$ and $r$ determine $n=qr$, this will lead to an upper bound on the number of possible values of $n$.

To carry this plan out, it is convenient to impose some restrictions on $n$ additional to the lower bound on $q=P^{+}(n)$ assumed above, namely:
\begin{enumerate}
	\item $n > x/\log{x}$,
	\item $n$ satisfies the conditions of Lemma \ref{lem:tech} with
\[ \theta:=0.0579\quad\text{and}\quad y := \exp(\log{x}/\log\log{x}). \]
	\item $\Omega(n) \leq 1.359 \log\log{x}$.
\end{enumerate}
Clearly, (i) can be assumed excluding $O(x/\log{x})$ values of $n$, which is acceptable. By Lemma \ref{lem:tech}, the number of $n \leq x$ which are exceptions to (ii) is $\ll x/(\log{x})^{0.0578}$. Finally, by Lemma \ref{lem:HT2}, the number of $n \leq x$ which violate (iii) is 
$\ll x/(\log x)^{Q(1.359)} \ll x/(\log{x})^{0.0578}$. (Note that exponent $\frac{2}{35}$ claimed in this case of the theorem satisfies $\frac{2}{35} = 0.0571\ldots < 0.0578$.)

Since $r=n/q$ while $q > \exp(2\log{x}/\log\log{x})$, the number of possible $r$ is at most
\[ x/\exp(2\log{x}/\log\log{x}). \]
Given $r$, the inequalities governing the $a_d$ in \eqref{eq:mrep} imply that the number of possibilities for the right-hand side of \eqref{eq:ourdiv} is bounded by \begin{equation}\label{eq:product0} \prod_{d \mid r}\left(1 + \phi(d)/\ell^{\ast}(d)\right). \end{equation} By condition (ii) above, we have (using $n >x/\log{x} > y$)
\[ \ell^{\ast}(n) > n/\exp(4(\log{n})^{\theta}). \]
So by Lemma \ref{lem:monotone}, the product \eqref{eq:product0} is bounded above by
\[ \left(1 + \frac{\phi(n)}{\ell^{\ast}(n)}\right)^{d(n)} \leq \left(1 + \exp(4(\log{n})^{\theta})\right)^{d(n)} \leq \exp(O((\log{x})^{\theta} 2^{\Omega(n)})). \]
By condition (iii), $2^{\Omega(n)} \leq (\log{x})^{1.359 \log{2}}$, while $1.359 \log{2} + \theta < 0.9999$. So given $r$, the right-hand side of \eqref{eq:ourdiv} is determined in at most 
\[ \exp((\log{x})^{0.9999}) \]
ways, for large $x$. Since the right-hand side of \eqref{eq:ourdiv} is an integer in $[1,x]$, once it is fixed, the number of possibilities for its divisor $\ell^{\ast}(q)$ is at most
\[ \exp(\log{x}/\log\log{x}). \]
(We are using again the maximal order of the divisor function.)
Once more invoking condition (ii), we have (since $q > y^2 > y$)
\[ \frac{q-1}{\ell^{\ast}(q)} < \frac{q}{\ell^{\ast}(q)} < \exp(4(\log{q})^{\theta}) <  \exp(4(\log{x})^{\theta}); \]
since the ratio $(q-1)/\ell^{\ast}(q)$ is integral, we see that given $\ell^{\ast}(q)$, there are at most $\exp(4(\log{x})^{\theta})$ possibilities for $q$. 

Piecing everything together (determining successively $r$, the right-hand side of \eqref{eq:ourdiv}, $\ell^{\ast}(q)$, and finally $q$), the number of possibilities for $n=rq$ is bounded above by
\begin{multline*} \frac{x}{\exp(2\log{x}/\log\log{x})} \cdot \exp((\log{x})^{0.9999})\cdot \exp(\log{x}/\log\log{x}) \cdot \exp(4(\log{x})^{\theta})
\\ < \frac{x}{\exp(\frac{1}{2}\log{x}/\log\log{x})} < \frac{x}{\log{x}},
\end{multline*}
which is negligible. This completes the proof of the second case of the theorem, with the exponent $\frac{2}{35} = 0.0571\dots$ replaced by the larger number $0.0578$.
\end{proof}

\section{Concluding remarks: Variations on the $\Q$-practical numbers}
\label{sec:variants}
Srinivasan's practical numbers have a natural dual, namely, those $n$ for which each $m\in [0,\sigma(n)]$ has \emph{at most one} representation as a sum of distinct divisors of $n$. Call these \emph{efficient numbers}. Using the theory of sets of multiples, Erd\H{o}s showed \cite[Theorem 2]{erdos70} that the set of efficient numbers possesses a positive asymptotic density.

On the polynomial side, we define $n$ to be \emph{$\Q$-efficient} if $T^n-1$ has at most one monic divisor in $\Q[T]$ of each degree $m \in [0,n]$. Erd\H{o}s's argument, based on the theory of sets of multiples, may be adapted to show that the $\Q$-efficient numbers also have positive density. Indeed, this is immediate from the methods of \cite{erdos70} and the following lemma.

\begin{lem} If $\Ss$ is the set of natural numbers $n$ satisfying
\begin{enumerate}
	\item $n$ is not $\Q$-efficient,
	\item if $d \mid n$ and $d < n$, then $d$ is $\Q$-efficient,
	\item $\Omega(n) < 1.1 \log\log{(3n)}$,
\end{enumerate}
then the sum of the reciprocals of the members of $\Ss$ converges.
\end{lem}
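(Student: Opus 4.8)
The strategy is to prove the substantially stronger bound
\[
  \#\{n\in\Ss:\ n\le x\}\ \ll\ \frac{x}{(\log x)^{2}}\qquad(x\ \text{large}),
\]
from which $\sum_{n\in\Ss}1/n<\infty$ follows by partial summation. First I record three reductions. Since $T^{2}-1=(T-1)(T+1)$ has two monic divisors of degree $1$, the number $2$ is not $\Q$-efficient, so condition (ii) keeps every even $n>2$ out of $\Ss$ while (iii) keeps out $2$; hence each $n\in\Ss$ is odd and $P^{+}(n)\ge 3$. Next, unwinding the definition via \eqref{eq:universal} and the irreducibility of the cyclotomic polynomials over $\Q$, non-$\Q$-efficiency of $n$ yields two distinct subsets of the divisors of $n$ with equal sums of $\phi$-values; deleting their common part gives disjoint nonempty sets $A,B$ of divisors of $n$ with $\sum_{d\in A}\phi(d)=\sum_{d\in B}\phi(d)$, and I fix one such \emph{witness} for each $n\in\Ss$. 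Finally, (iii) gives $d(n)\le 2^{\Omega(n)}<(\log 3n)^{1.1\log 2}$ with $1.1\log 2=0.762\ldots$; this is the only use of (iii).

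Put $y:=\exp(2\log x/\log\log x)$. The $n\le x$ with $P^{+}(n)\le y$ number at most $\Psi(x,y)\ll x/(\log x)^{2}$ by Lemma \ref{lem:tensmooth}, exactly as in the proof of Theorem \ref{thm:EL}; so I may assume $P^{+}(n)>y$. Write $p:=P^{+}(n)$, $a:=v_{p}(n)\ge 1$, and $r:=n_{(p)}=n/p^{a}$; the crucial point is that $r$ is a \emph{proper} divisor of $n$, hence $\Q$-efficient by (ii), and $\sum_{d\mid r}\phi(d)=r$. Reducing the relation $\sum_{A}\phi=\sum_{B}\phi$ modulo $p-1$ annihilates every term with $p\mid d$ (as $(p-1)\mid\phi(d)$ there) and leaves
\[
  \Delta_{1}:=\sum_{\substack{d\in A\\ p\nmid d}}\phi(d)-\sum_{\substack{d\in B\\ p\nmid d}}\phi(d)\equiv 0\pmod{p-1},\qquad |\Delta_{1}|\le r .
\]
If $\Delta_{1}\ne 0$, take $\Delta:=\Delta_{1}$. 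If $\Delta_{1}=0$, then the $\Q$-efficiency of $r$---which means distinct subsets of its divisors have distinct $\phi$-sums---forces the parts of $A$ and $B$ coprime to $p$ to be empty, so $A$ and $B$ are supported entirely on multiples of $p$; letting $b:=\min\{v_{p}(d):d\in A\cup B\}\ge 1$ and dividing the relation by the common factor $p^{b-1}(p-1)$, a reduction modulo $p$ then isolates the divisors with $v_{p}(d)=b$ and produces a nonzero integer
\[
  \Delta_{2}:=\sum_{\substack{d\in A\\ v_{p}(d)=b}}\phi(d/p^{b})-\sum_{\substack{d\in B\\ v_{p}(d)=b}}\phi(d/p^{b}),\qquad p\mid\Delta_{2},\qquad |\Delta_{2}|\le r
\]
(the nonvanishing again forced by the $\Q$-efficiency of $r$); take $\Delta:=\Delta_{2}$. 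In every case $\Delta$ is a nonzero integer with $|\Delta|\le r$, divisible by $p-1$ or by $p$, and determined by $r$ together with a bit recording which case occurs and a pair of disjoint subsets of the divisors of $r$.

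The remaining count is routine bookkeeping of the kind in Theorems \ref{thm:EL} and \ref{thm:polypractical}. The assignment $n\mapsto\big(r;\,\text{case};\,\text{subset pair};\,p;\,a\big)$ is injective, since $n=p^{a}r$. Now $r\le n/p<x/y$; given $r$ there are at most $2\cdot 3^{d(r)}$ choices of the case and the subset pair; given those, $\Delta$ is fixed, and $p$---a prime divisor of $\Delta$ or a prime with $p-1\mid\Delta$---is one of at most $2\max_{h\le x}d(h)$ values; and given $p$ there are fewer than $\log x/\log y$ admissible $a$. Using $d(r)\le d(n)<(\log 3x)^{0.77}$, so $3^{d(r)}=\exp(O((\log x)^{0.77}))=\exp(o(\log x/\log\log x))$, and the maximal order $\max_{h\le x}d(h)=\exp((\log 2+o(1))\log x/\log\log x)$, and summing over $r<x/y$ with the above value of $y$, I obtain
\[
  \#\{n\in\Ss:\ n\le x,\ P^{+}(n)>y\}\ \ll\ x\exp\!\Big(-\big(2-\log 2-o(1)\big)\tfrac{\log x}{\log\log x}\Big)\ \ll\ \frac{x}{(\log x)^{2}}
\]
for large $x$, because $2-\log 2>1$. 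Combining the two ranges and applying partial summation finishes the proof.

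The one substantive step is the extraction of the nonzero integer $\Delta$ from the witness; within it, the only genuinely delicate case is $\Delta_{1}=0$, where one must pass to the sublattice of multiples of $p$ and reduce modulo the appropriate power of $p$, using the $\Q$-efficiency of the proper divisor $r=n_{(p)}$ to prevent $\Delta_{2}$ from vanishing. Everything past that---the smooth-number input of Lemma \ref{lem:tensmooth} and the arithmetic of the exponents---runs parallel to the arguments already in the paper.
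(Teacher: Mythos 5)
Your proof is correct and follows essentially the same route as the paper's: reduce to showing the counting function of $\Ss$ is $\ll x/(\log x)^2$, discard $y$-smooth $n$ via Lemma \ref{lem:tensmooth}, extract from the non-efficiency witness a nonzero integer of size $\leq r$ divisible by $p-1$ (using the $\Q$-efficiency of the proper divisor $r$ to force nonvanishing), and then count by fixing $r$ and invoking the bound $d(n)\le 2^{\Omega(n)}$ from (iii) together with the maximal order of the divisor function. The only deviation is minor: where the paper simply discards the $n$ with $P^{+}(n)^2\mid n$ by a squarefull-part count (so that the degenerate case gives an outright contradiction), you keep those $n$ and handle arbitrary $p$-adic multiplicity via the $\Delta_2$ reduction modulo $p$ and an extra enumeration of the exponent $a$, which is also valid.
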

\begin{proof} The proof is similar to Erd\H{o}s's argument and to our own proof of Case 2 of Theorem \ref{thm:EL}, so we provide only a sketch. By partial summation, it suffices to show that the counting function of $\Ss$ is $\ll x/(\log{x})^2$ for large $x$. Suppose that $n \in \Ss \cap [1,x]$. Since $n$ is not $\Q$-efficient, there are two monic divisors of $T^n-1$ of the same degree, and hence there is a nontrivial solution to the equation
\begin{equation}\label{eq:initrel} \sum_{d \mid n}\epsilon_d \phi(d) = 0, \quad\text{where each}\quad \epsilon_d \in \{-1, 0, 1\}. \end{equation}
(Here \emph{nontrivial} means that not all $\epsilon_d = 0$.) Put $p:=P^{+}(n)$. We can assume that
\[ P^{+}(n) > z^2, \quad\text{where}\quad z:=\exp(\log{x}/\log\log{x}), \]
since the number of exceptional $n\leq x$ is $\ll x/(\log{x})^2$ by Lemma \ref{lem:tensmooth}. We can also assume that $p:=P^{+}(n)$ divides $n$ only to the first power. Otherwise, $n$ has squarefull part $\geq p^2 > z^4$, and the number of such $n \leq x$ is $\ll x/z^2$, which is negligible. 

Consider \eqref{eq:initrel} modulo $p-1$. Whenever $p \mid d$, one has that $p-1 \mid \phi(d)$. So putting $r:=n/p$, it follows that
\begin{equation}\label{eq:divrelf} p-1 \mid \sum_{d \mid r} \epsilon_d \phi(d). \end{equation}
We claim that the right-hand side of \eqref{eq:divrelf} is a nonzero integer. If some $\epsilon_d$ appearing in \eqref{eq:divrelf} is nonzero, this is clear: In that case, the vanishing of the right-hand side of \eqref{eq:divrelf} implies that $T^r-1$ has two monic divisors of the same degree, contradicting condition (ii) in the definition of $\Ss$. But if all of the $\epsilon_d$ in \eqref{eq:divrelf} vanish, then the original sequence of $\epsilon_d$  appearing in \eqref{eq:initrel} is supported on multiples of $p$. In that case, after dividing \eqref{eq:initrel} through by $p-1$, we again obtain a contradiction to the $\Q$-efficiency of $r=n/p$.

Now we fix $r$ and count the number of $p$ allowed by \eqref{eq:divrelf}. Mimicking the end of the proof of Case 2 of Theorem \ref{thm:EL}, we find that the number of possible $n$ is
\[ \leq \frac{x}{\exp(2\log{x}/\log\log{x})} \cdot 2^{2^{1.1\log\log{(3x)}}}\cdot \exp(\log{x}/\log\log{x}) < x/\exp(\frac{1}{2}\log{x}/\log\log{x}), \]
once $x$ is large.
(We use here that $1.1 < 1/\log{2}$.) This last quantity is certainly $\ll x/(\log{x})^2$. 
\end{proof}

One might ask for both efficiency and practicality simultaneously, i.e., for numbers $n$ where each $m \in [0,\sigma(n)]$ has \emph{precisely one} representation as a sum of distinct divisors of $n$. The powers of $2$ have this property, and it is not so hard to show that these are all such $n$. The answer to the analogous polynomial problem is perhaps more unexpected. Define a \emph{$\Q$-optimal number} as an $n$ for which $T^n-1$ has precisely one monic divisor of each degree $m \in [0,n]$. In the remainder of this subsection, we classify the $\Q$-optimal numbers. 

The following lemma is due to the second author \cite[Lemma 4.1]{thompson11}.

\begin{lem}\label{lem:ssanalogue} Suppose that $n$ is $\Q$-practical. If $p$ is a prime not dividing $n$, then $pn$ is $\Q$-practical if and only if $p \leq n+2$. Moreover, $p^k M$ is $\Q$-practical, where $k \geq 2$, if and only if $p \leq n+1$. 
\end{lem}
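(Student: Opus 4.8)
The plan is to reduce everything to the subset-sum description of $\Q$-practicality and then to a short sequence of elementary ``do these blocks of consecutive integers touch?'' computations. Recall that over $\Q$ each $\Phi_d$ is irreducible of degree $\phi(d)$, so the monic divisors of $T^{p^kn}-1$ are exactly the products $\prod_{e}\Phi_e$ taken over subsets of the divisors of $p^kn$; since $p\nmid n$, those divisors are the numbers $p^jd$ with $0\le j\le k$ and $d\mid n$, all distinct, with $\phi(p^jd)=\phi(p^j)\phi(d)$. First I would record the consequence that, because $n$ is $\Q$-practical, for each fixed $j$ the quantity $\sum_{d\in\Ss_j}\phi(p^jd)=\phi(p^j)\sum_{d\in\Ss_j}\phi(d)$ runs over all of $\phi(p^j)\cdot\{0,1,\dots,n\}$ as $\Ss_j$ ranges over subsets of the divisors of $n$, and that these choices can be made independently in $j$. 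Hence (using $\phi(p^0)=1$ and $\phi(p^j)=p^{j-1}(p-1)$ for $j\ge 1$) the set of degrees of divisors of $T^{p^kn}-1$ is exactly
\[ D_k \;=\; \Bigl\{\, b_0 + (p-1)\textstyle\sum_{j=1}^{k} p^{\,j-1}b_j \;:\; 0\le b_0,\dots,b_k\le n \,\Bigr\}, \]
and $p^kn$ is $\Q$-practical precisely when $D_k=\{0,1,\dots,p^kn\}$, the right endpoint being automatic since $\sum_{e\mid p^kn}\phi(e)=p^kn$. (Here I read the $M$ appearing in the statement as $n$.)

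For $k=1$ this is immediate: $D_1=\bigcup_{b=0}^{n}\{(p-1)b,\dots,(p-1)b+n\}$ is a union of blocks of length $n$ spaced $p-1$ apart, so it exhausts $\{0,\dots,pn\}$ exactly when consecutive blocks meet, i.e. when $p-1\le n+1$; and if $p>n+2$, then $n+1$ (which is at most $pn$) cannot be written as $b_0+(p-1)b_1$ with $0\le b_0,b_1\le n$, since $b_1\ge1$ would force the value to be $\ge p-1>n+1$. That gives the first assertion.

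For $k\ge2$ the ``if'' direction is again a touching-blocks argument, but run twice. Setting $c=\sum_{j=1}^{k}p^{j-1}b_j$, I would show by induction on $k$ that if $p\le n+1$ then $c$ ranges over the whole interval $\{0,1,\dots,n\frac{p^k-1}{p-1}\}$ (at each stage consecutive length-$n$ blocks spaced $p$ apart overlap because $p\le n+1$); then $D_k=\bigcup_c\{(p-1)c,\dots,(p-1)c+n\}$ also has overlapping blocks, since $p-1\le n+1$, so $D_k=\{0,\dots,p^kn\}$ and $p^kn$ is $\Q$-practical. For the ``only if'' direction, suppose $p\ge n+2$. Every element of $D_k$ obtained with $b_2=\dots=b_k=0$ lies in $\{0,\dots,pn\}$, whereas turning on any $b_j$ with $j\ge2$ forces the value to be at least $(p-1)p$, and $(p-1)p>pn$ precisely because $p\ge n+2$. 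So $D_k$ omits every integer in the nonempty window $(pn,(p-1)p)\subseteq[0,p^kn]$; in particular $pn+1\notin D_k$, so $p^kn$ is not $\Q$-practical. Together the two directions give the second assertion.

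The one point that will need care is exactly this last step: making sure no combination of the high-index variables $b_2,\dots,b_k$ slips a value into the forbidden window $(pn,(p-1)p)$. That is handled entirely by the clean dichotomy ``$(p-1)p>pn\iff p\ge n+2$,'' which separates the $b_0,b_1$-only part of $D_k$ (which lies in $[0,pn]$) from everything else (which is $\ge(p-1)p$). Everything else is routine interval bookkeeping, with only trivial edge cases to verify, since a $\Q$-practical $n$ satisfies $n\ge1$ and hence $p\ge3$ whenever $p\ge n+2$.
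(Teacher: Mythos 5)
Your proof is correct. Note that the paper does not actually prove this lemma; it is quoted from the second author's earlier work \cite[Lemma 4.1]{thompson11}, so there is no in-paper argument to compare against. Your route --- reducing $\Q$-practicality to the subset-sum set $D_k=\{b_0+(p-1)\sum_{j\ge1}p^{j-1}b_j\}$ with each $b_j$ ranging independently over $\{0,\dots,n\}$, and then checking when consecutive translates of $\{0,\dots,n\}$ meet --- is the natural one and is essentially the argument in the cited source. The two delicate points are both handled: for $k=1$ the threshold is $p-1\le n+1$, giving the $p\le n+2$ criterion, while for $k\ge 2$ the witness $pn+1$ falls strictly inside the gap $(pn,(p-1)p)$ exactly when $p\ge n+2$ (the gap has width $p(p-1-n)\ge 2$, so there is no boundary coincidence), which is why the threshold drops to $p\le n+1$. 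Your reading of the statement's $M$ as $n$ is the intended one (it is a typo in the paper).
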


Let $F_m := 2^{2^m}+1$ represent the $m$th \emph{Fermat number}. Below, we use the well-known result that if $p$ is an odd prime for which $p-1$ is a power of $2$, then $p = F_m$ for some $m$ (see \cite[p. 18]{HW08}); such a prime $p$ is called a \emph{Fermat prime}.

\begin{prop}\label{prop:qefficient} Let $k$ be a nonnegative integer.  Suppose that all of $F_0, F_1, \dots, F_{k-1}$ are prime. Then 
\begin{equation}\label{eq:optimaln} n := F_0 F_1 \cdots F_{k-1} \end{equation}
is a $\Q$-optimal number with $k$ distinct prime factors. Conversely, if there is any $\Q$-optimal number with $k$ distinct prime factors, then $F_0, \dots, F_{k-1}$ are all prime, and $n$ has the form \eqref{eq:optimaln}.
\end{prop}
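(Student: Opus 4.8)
The plan is to prove both directions by induction on $k$, using Lemma~\ref{lem:ssanalogue} as the engine. Note first that $F_m = 2^{2^m}+1$ satisfies $F_m - 1 = 2^{2^m} = F_0 F_1 \cdots F_{m-1} + 1$, since $\prod_{j<m} F_j = \prod_{j<m}(2^{2^j}+1) = 2^{2^m}-1$ (a telescoping identity). Thus if $P_{k-1} := F_0 \cdots F_{k-1}$, then $F_k = P_{k-1}+2$ exactly, and more generally $F_k - 1 = P_{k-1}+1$. These two numerical facts are precisely what make the boundary cases of Lemma~\ref{lem:ssanalogue} tight, and they are the crux of the whole argument.

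For the forward direction, I would argue by induction that $n = P_{k-1}$ is $\Q$-optimal. The base case $k=0$ gives $n=1$, for which $T-1$ and $1$ are the only monic divisors of $T^1-1$, so $n=1$ is trivially $\Q$-optimal. For the inductive step, assume $P_{k-2}$ is $\Q$-optimal (in particular $\Q$-practical) and $F_0,\dots,F_{k-1}$ are all prime; I want $P_{k-1} = P_{k-2}\cdot F_{k-1}$ to be $\Q$-optimal. Practicality of $P_{k-1}$ follows from Lemma~\ref{lem:ssanalogue} and the identity $F_{k-1} = P_{k-2}+2$, which gives exactly the boundary condition $p \le n+2$ with $p = F_{k-1}$, $n = P_{k-2}$. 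For the uniqueness half, I would count monic divisors: $T^{P_{k-1}}-1$ has $d(P_{k-1}) = 2^k$ monic divisors (squarefree $n$), and I must show all $2^k$ degrees in $[0,P_{k-1}]$ are distinct, i.e. the $2^k$ subset sums $\sum_{d \in \Ss}\phi(d)$ over $\Ss \subseteq \{\text{divisors of }P_{k-1}\}$ are pairwise distinct. Since $P_{k-1}$ is practical these sums cover all of $[0, P_{k-1}]$; but an interval of $P_{k-1}+1$ integers has exactly $P_{k-1}+1 = 2^{2^{k-1}} = \sum_{d \mid P_{k-1}}\phi(d)+1$... wait — here one must instead observe $\sum_{d\mid P_{k-1}}\phi(d) = P_{k-1}$, so the $2^k$ subset sums lie in $[0,P_{k-1}]$, a set of size $P_{k-1}+1$; optimality is equivalent to $2^k = P_{k-1}+1$, which is false for $k \ge 2$ — so I should \emph{not} try to deduce optimality from practicality plus counting. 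Instead, optimality should be proven directly by induction: a nontrivial relation $\sum_{d \mid P_{k-1}}\epsilon_d \phi(d) = 0$ with $\epsilon_d \in \{-1,0,1\}$, reduced modulo $\phi(F_{k-1}) = F_{k-1}-1 = P_{k-2}+1$, would (as in the $\Q$-efficient lemma above) force either a nontrivial such relation among divisors of $P_{k-2}$ — impossible by the inductive hypothesis — or, after dividing through by $F_{k-1}-1$, again a contradiction; the key point is that $\big|\sum_{d \mid P_{k-2}}\epsilon_d \phi(d)\big| \le \sum_{d \mid P_{k-2}}\phi(d) = P_{k-2} < P_{k-2}+1 = F_{k-1}-1$, so divisibility by $F_{k-1}-1$ forces that partial sum to vanish.

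For the converse, suppose $n$ is $\Q$-optimal with $k$ distinct prime factors; I induct on $k$ to show $n = P_{k-1}$ and each $F_j$ ($j<k$) is prime. A $\Q$-optimal number is in particular $\Q$-practical, so by the Stewart--Sierpi\'nski-type structure its prime factors are small; write $q = P^+(n)$ and $n = q^e n_0$. First, $e = 1$: if $e \ge 2$ then $T^n - 1$ has two monic divisors of degree $\phi(n/q)\cdot 1$ vs. a subset sum not using the factor $\Phi_{q}\cdots$, more carefully, $\Phi_q, \Phi_{q^2},\dots$ all have $q$-power-related degrees and one gets a collision $\phi(q^e d) $ representable two ways — I would instead invoke that $n/q$ is $\Q$-optimal (every divisor of a $\Q$-optimal number is $\Q$-optimal, since divisors of $T^{n/q}-1$ are divisors of $T^n-1$) and use Lemma~\ref{lem:ssanalogue}: $q^e\cdot(n/q^e)$ being merely $\Q$-practical already requires $q \le (n/q^e)+1$, and optimality of the $q$-part forces $e=1$ together with $q = (n/q)+2$, pinning $q$ down once $n/q$ is known. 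By induction $n/q = P_{k-2}$ with $F_0,\dots,F_{k-3}$ prime, so $q = P_{k-2}+2 = F_{k-1}$; since $q$ is prime and $q-1 = P_{k-2} = 2^{2^{k-1}}$ is a power of $2$, $q$ is a Fermat prime, i.e. $F_{k-1}$ is (prime and equals) the $(k-1)$st Fermat number, completing the induction. The base case $k=0$ forces $n=1$.

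The main obstacle I anticipate is the converse, specifically ruling out higher prime powers and pinning the exact value of $q = P^+(n)$: Lemma~\ref{lem:ssanalogue} governs $\Q$-practicality with boundary conditions $p \le n+2$ (for $pn$) and $p \le n+1$ (for $p^k n$), but I need the \emph{strict} failure of optimality when $q = n/q + 2$ is achieved and $e \ge 2$, or when $q < n/q+2$. This requires carefully tracking not just whether every degree is attained but whether it is attained uniquely — essentially showing that each extra divisor $q$ (when the practicality inequality is strict) or each extra power of $q$ creates a genuine collision of degrees among monic divisors of $T^n-1$. I expect this to come down to a clean counting/extremality argument: $T^n-1$ has exactly $d(n)$ monic divisors, these have degrees in $[0,n]$, and $\Q$-optimality is equivalent to $d(n) = n+1$ together with $\Q$-practicality; then $d(n) = n+1$ with $n$ squarefree of $k$ prime factors forces $2^k = n+1$, and combined with practicality (which bounds the primes) this rigidly determines $n = F_0 \cdots F_{k-1}$ and the primality of each $F_j$. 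I would aim to make this equivalence $\Q$-optimal $\iff$ ($\Q$-practical and $d(n)=n+1$) the organizing observation, reducing everything to Lemma~\ref{lem:ssanalogue} plus the arithmetic of $d(n) = n+1$.
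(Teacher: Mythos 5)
The central problem is a miscount that derails both halves of your argument: the number of monic divisors of $T^n-1$ in $\Q[T]$ is $2^{d(n)}$, not $d(n)$, since $T^n-1=\prod_{d\mid n}\Phi_d(T)$ is a product of $d(n)$ \emph{distinct} irreducibles and its monic divisors are the $2^{d(n)}$ subproducts. With the correct count, the "false" identity you recoil from in the forward direction is actually true: for $n=P_{k-1}$ one has $2^{d(n)}=2^{2^k}=P_{k-1}+1=n+1$, so $\Q$-practicality plus the pigeonhole principle immediately gives uniqueness of degrees. This is exactly the paper's sufficiency proof, which you abandon for no reason. (Your replacement — induction on a nontrivial relation $\sum_{d\mid n}\epsilon_d\phi(d)=0$ reduced mod $F_{k-1}-1$, using $\bigl|\sum_{d\mid P_{k-2}}\epsilon_d\phi(d)\bigr|\le P_{k-2}<F_{k-1}-1$ — is correct and does establish efficiency, so the forward direction survives, but only by a longer route.) The same miscount corrupts your "organizing observation": the correct equivalence is $\Q$-optimal $\iff$ $\Q$-practical and $2^{d(n)}=n+1$, not $d(n)=n+1$; as written, your reduction "$d(n)=n+1$ forces $2^k=n+1$" is arithmetic nonsense (e.g.\ it fails already for $n=3$, $k=1$).

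For the converse there is a second, independent gap: your induction needs $n/q$ (with $q=P^{+}(n)$) to be $\Q$-optimal, but divisibility of $T^{n/q}-1$ by $T^n-1$ only transfers \emph{efficiency} downward, not practicality — and divisors of $\Q$-practical numbers need not be $\Q$-practical ($5\mid 20$ is a counterexample). You would have to prove separately that $n/q$ is $\Q$-practical, and likewise your treatment of $e\ge 2$ and of the exact equality $q=n/q+2$ is only sketched, leaning on the broken counting identity. The paper avoids all of this with a different device: for $\Q$-optimal $n$ one has the polynomial identity $\prod_{d\mid n}(1+T^{\phi(d)})=(1-T^{n+1})/(1-T)$, and setting $T=1$ gives $2^{d(n)}=n+1$; comparing with the factorization $(1+T)(1+T^2)\cdots(1+T^{2^{D-1}})$ and invoking unique factorization in $\Q[T]$ shows the multiset $\{\phi(d):d\mid n\}$ is $\{1,2,4,\dots,2^{D-1}\}$, whence $n$ is squarefree with Fermat-prime factors, and a short extremal argument pins the indices to $0,1,\dots,k-1$. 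If you repair your count to $2^{d(n)}=n+1$, your inductive converse can be salvaged (indeed $(P_{k-2}+1)^2=qP_{k-2}+1$ forces $q=P_{k-2}+2$ cleanly), but as written the proposal does not constitute a proof of the necessity direction.
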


\begin{proof}[Proof (sufficiency)]  Suppose that all of $F_0, \dots, F_{k-1}$ are prime, and define $n$ by \eqref{eq:optimaln}. The $\Q$-practicality of $n$ follows immediately from Lemma \ref{lem:ssanalogue} and the identity 
\begin{align*} F_0 F_1 \cdots F_{j-1}+2 &= (2^{2^0}-1)\left((2^{2^0}+1) (2^{2^1}+1) \cdots (2^{2^{j-1}}+1)\right) + 2 \\
	&= (2^{2^{j}} - 1) + 2 = F_{j}, \end{align*}
valid for all $j \geq 0$ (provided one interprets the empty product as $1$). Moreover, the identity \eqref{eq:universal} and the irreducibility of the cyclotomic polynomials implies that the number of monic divisors of $T^n-1$ is $2^{d(n)} = 2^{2^k}$, while the number of integers in $[0,n]$ is precisely
\[ n+1 = F_0 \cdots F_{k-1} + 1 = F_{k} - 1 = 2^{2^k}. \]
As these two numbers agree, the $\Q$-practicality of $n$ implies the $\Q$-efficiency of $n$ (by the pigeonhole principle). Hence, $n$ is $\Q$-optimal.\end{proof}
	
\begin{proof}[Proof (necessity)] There is nothing to prove if $k = 0$, so assume that $k \geq 1$. The key observation is that for each $\Q$-optimal number $n$, we have the formal identity
	\begin{equation}\label{eq:formalprod} \prod_{d \mid n} (1 + T^{\phi(d)}) = \sum_{m=0}^{n} T^m = \frac{1-T^{n+1}}{1-T}. \end{equation}
Evaluating \eqref{eq:formalprod} at $T=1$, we find that $2^{D} = n+1$, so that $n = 2^D-1$. In particular, $n$ is odd. Feeding the equality $n=2^D-1$ back into \eqref{eq:formalprod}, we find that
\begin{align*} \prod_{d \mid N}(1+T^{\phi(D)}) &= \frac{1-T^{2^D}}{1-T}\\
	&= (1+T)(1+T^2)(1+T^4)\cdots(1+T^{2^{D-1}}).\end{align*}
In both sides of this identity, we have a product of $D$ nonconstant  polynomials. Moreover, each of the $D$ right-hand factors is irreducible over $\Q$ (in fact, $1+T^{2^{j-1}} = \Phi_{2^j}(T)$). It follows from uniqueness of factorization in $\Q[T]$ that if one arranges the list of terms $\phi(d)$, where $d \mid n$, in increasing order, one obtains the sequence $\langle 1, 2, 4, \dots, 2^{D-1}\rangle$. 

The number $n$ must be squarefree. Otherwise, $p^2 \mid n$ for some $p \geq 3$ and so $\phi(p^2)=p(p-1)$ is divisible by $p$, contradicting that $\phi(p^2)$ is a power of $2$. So we may write
\[ n = p_1 \cdots p_k, \quad \text{where}\quad p_1 < p_2 < \dots < p_k. \]
Since each $\phi(d)$ is a power of $2$, we have in particular that each $p_i-1 = \phi(p_i)$ is a power of $2$, and so $p_i$ is a Fermat prime. Hence, the prime factorization of $n$ can be rewritten in the form
\[ n = F_{i_1} \cdots F_{i_k}, \quad \text{where}\quad 0 \leq i_1 < i_2 < \dots < i_k. \]
To complete the proof, we have to show that the sequence $\langle i_1, i_2, \dots, i_k\rangle$ coincides with the sequence $\langle 0, 1, \dots, k-1\rangle$. 

We claim that $T^n-1$ has a divisor of degree $m:= F_{i_1} F_{i_2} \cdots F_{i_{k-1}}+1$. To see this, it is sufficient (since $n$ is $\Q$-practical) to show that $m \leq n$. Clearly,
\begin{equation} m +1 \leq F_{0} F_1 F_2 \cdots F_{i_{k-1}} + 2\label{eq:throughout} = F_{i_{k-1}+1}.\end{equation}
Since $i_k \geq i_{k-1}+1$, we have $m +1 \leq F_{i_k} \leq n$. So $m < n$. Write $m$ as a sum of distinct terms $\phi(d)$, where $d\mid n$. If every $d$ involved in this representation divides $F_{i_1} \cdots F_{i_{k-1}}= n/F_{i_k}$, then $m \leq \sum_{d \mid F_{i_1} \cdots F_{i_{k-1}}} \phi(d) = F_{i_1} \cdots F_{i_{k-1}}$, 
which is not the case. So some $d$ in the representation is divisible by $F_{i_k}$, and hence $F_{i_k}-1 \leq \phi(d) \leq m$. Hence,
\[ F_{i_k} \leq m+1. \]
But from \eqref{eq:throughout}, we also have
\[ m+1 \leq F_{i_{k-1}+1} \leq F_{i_k}. \]
It follows that $i_k = i_{k-1}+1$ and that equality holds throughout \eqref{eq:throughout}. The latter forces the $(k-1)$-tuple $\langle i_1, i_2, \dots, i_{k-1}\rangle$ to coincide with the $(i_{k-1}+1)$-tuple $\langle 0, 1, 2, \dots, i_{k-1}\rangle$, so that $\langle i_1, \dots, i_{k-1} \rangle = \langle 0, 1, \dots, k-2\rangle$. Since $i_k = i_{k-1}+1$, we conclude that
$\langle i_1, i_2, \dots, i_k\rangle = \langle 0, 1, \dots, k-1\rangle$, as was to be shown.
\end{proof}

\begin{cor} There are precisely six $\Q$-optimal numbers, namely $2^{2^i}-1$ for $i=0, 1, \dots, 5$.
\end{cor}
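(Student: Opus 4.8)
The plan is to read this off directly from Proposition \ref{prop:qefficient}. That proposition asserts that there exists a $\Q$-optimal number with exactly $k$ distinct prime factors if and only if all of $F_0, F_1, \dots, F_{k-1}$ are prime, and that in that case the number is uniquely determined, namely $n = F_0 F_1 \cdots F_{k-1}$. So the whole corollary reduces to bookkeeping about which initial segments of the Fermat numbers consist entirely of primes.

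First I would record the identity $F_0 F_1 \cdots F_{k-1} = 2^{2^k}-1$, which is exactly the identity used inside the proof of Proposition \ref{prop:qefficient} (from $F_0 \cdots F_{j-1}+2 = F_j$, i.e.\ $F_0 \cdots F_{j-1} = 2^{2^j}-1$). With the usual convention that the empty product is $1$, this also covers $k=0$, giving $n=1 = 2^{2^0}-1$; and indeed $T-1$ has precisely one monic divisor of each of the degrees $0$ and $1$, so $n=1$ is genuinely $\Q$-optimal. Hence, for each $k$ for which $F_0,\dots,F_{k-1}$ are all prime, the unique $\Q$-optimal number with $k$ prime factors is $2^{2^k}-1$.

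Next I would invoke the classical facts that $F_0 = 3$, $F_1 = 5$, $F_2 = 17$, $F_3 = 257$, and $F_4 = 65537$ are all prime, while $F_5 = 2^{32}+1 = 641 \cdot 6700417$ is composite (Euler). Consequently $F_0,\dots,F_{k-1}$ are all prime precisely when $0 \le k \le 5$: for each such $k$ there is exactly one $\Q$-optimal number with $k$ prime factors, namely $2^{2^k}-1$, whereas for $k \ge 6$ the list $F_0,\dots,F_{k-1}$ already contains the composite $F_5$, so Proposition \ref{prop:qefficient} rules out any $\Q$-optimal number with that many prime factors. Letting $k$ run over $0,1,\dots,5$ yields exactly the six numbers $2^{2^i}-1$ for $i = 0,1,\dots,5$, and these are all of them.

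There is no real obstacle here, since this is a direct consequence of Proposition \ref{prop:qefficient}; the only points requiring care are the treatment of the $k=0$ endpoint (checking that the empty-product conventions really do certify $n=1$) and the correct citation of Euler's factorization of $F_5$, which is what makes the word ``precisely'' in the statement legitimate.
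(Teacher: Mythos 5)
Your argument is correct and is essentially identical to the paper's proof: both deduce the corollary directly from Proposition \ref{prop:qefficient} together with the primality of $F_0,\dots,F_4$, Euler's factorization $F_5 = 641\cdot 6700417$, and the identity $F_0\cdots F_{k-1} = 2^{2^k}-1$. Your extra care with the $k=0$ endpoint is a harmless (and reasonable) elaboration of what the paper leaves implicit.
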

\begin{proof} Since $F_0, F_1, \dots, F_4$ are prime while $F_5 = 641 \cdot 6700417$, Proposition \ref{prop:qefficient} shows that the $\Q$-optimal numbers are precisely the numbers $F_0 F_1 \cdots F_{i-1} = 2^{2^i}-1$ for $i = 0, 1, \dots, 5$.
\end{proof}  
%
%

\section*{Acknowledgements}
We thank Greg Martin and Carl Pomerance for helpful conversations. Some of the work on this paper was conducted while the first author was visiting Dartmouth College. He thanks the Dartmouth mathematics department for their hospitality.

\providecommand{\bysame}{\leavevmode\hbox to3em{\hrulefill}\thinspace}
\providecommand{\MR}{\relax\ifhmode\unskip\space\fi MR }
\providecommand{\MRhref}[2]{%
  \href{http://www.ams.org/mathscinet-getitem?mr=#1}{#2}
}
\providecommand{\href}[2]{#2}


\begin{thebibliography}{Tho12b}

\bibitem[Car84]{car84}
M.~Car, \emph{Polyn\^omes de {${\bf F}_{q}[X]$} ayant un diviseur de degr\'e
  donn\'e}, Acta Arith. \textbf{43} (1984), no.~2, 131--154.

\bibitem[dB66]{dB66}
N.~G. de~Bruijn, \emph{On the number of positive integers {$\leq x$} and free
  prime factors {$>y$}. {II}}, Nederl. Akad. Wetensch. Proc. Ser. A 69=Indag.
  Math. \textbf{28} (1966), 239--247.

\bibitem[EPS91]{EPS91}
P.~Erd\H{o}s, C.~Pomerance, and E.~Schmutz, \emph{Carmichael's lambda
  function}, Acta Arith. \textbf{58} (1991), no.~4, 363--385.

\bibitem[Erd50]{erdos50}
P.~Erd\H{o}s, \emph{On a {D}iophantine equation}, Mat. Lapok \textbf{1} (1950),
  192--210.

\bibitem[Erd70]{erdos70}
\bysame, \emph{Some extremal problems in combinatorial number theory},
  Mathematical {E}ssays {D}edicated to {A}. {J}. {M}acintyre, Ohio Univ. Press,
  Athens, Ohio, 1970, pp.~123--133.

\bibitem[For08]{ford08}
K.~Ford, \emph{The distribution of integers with a divisor in a given
  interval}, Ann. of Math. (2) \textbf{168} (2008), no.~2, 367--433.

\bibitem[FPS01]{FPS01}
J.~B. Friedlander, C.~Pomerance, and I.~E. Shparlinski, \emph{Period of the
  power generator and small values of {C}armichael's function}, Math. Comp.
  \textbf{70} (2001), no.~236, 1591--1605, errata in \textbf{71} (2002),
  1803--1806.

\bibitem[Got12]{gottschlich12}
A.~Gottschlich, \emph{On positive integers $n$ dividing the $n$th term of an
  elliptic divisibility sequence}, New York J. Math. \textbf{18} (2012),
  409--420.

\bibitem[HS84]{HS84}
M.~Hausman and H.~N. Shapiro, \emph{On practical numbers}, Comm. Pure Appl.
  Math. \textbf{37} (1984), no.~5, 705--713.

\bibitem[HT88]{HT88}
R.~R. Hall and G.~Tenenbaum, \emph{Divisors}, Cambridge Tracts in Mathematics,
  vol.~90, Cambridge University Press, Cambridge, 1988.

\bibitem[HW08]{HW08}
G.~H. Hardy and E.~M. Wright, \emph{An introduction to the theory of numbers},
  sixth ed., Oxford University Press, Oxford, 2008.

\bibitem[KP05]{KP05}
P.~Kurlberg and C.~Pomerance, \emph{On the periods of the linear congruential
  and power generators}, Acta Arith. \textbf{119} (2005), no.~2, 149--169.

\bibitem[KZ01]{KZ01}
J.~Knopfmacher and W.-B. Zhang, \emph{Number theory arising from finite
  fields}, Monographs and Textbooks in Pure and Applied Mathematics, vol. 241,
  Marcel Dekker Inc., New York, 2001.

\bibitem[Mar91]{margenstern91}
M.~Margenstern, \emph{Les nombres pratiques: th\'eorie, observations et
  conjectures}, J. Number Theory \textbf{37} (1991), no.~1, 1--36.

\bibitem[PT12]{PT12}
P.~Pollack and L.~Thompson, \emph{Practical pretenders}, Publ. Math. Debrecen \textbf{82}
  (2013), to appear.

\bibitem[Rib72]{ribenboim72}
P.~Ribenboim, \emph{Algebraic numbers}, Pure and Applied Mathematics, vol.~27,
  Wiley-Interscience, New York-London-Sydney, 1972.

\bibitem[Sai97]{saias97}
E.~Saias, \emph{Entiers \`a diviseurs denses. {I}}, J. Number Theory
  \textbf{62} (1997), no.~1, 163--191.

\bibitem[Sie55]{sierpinski55}
W.~Sierpi{\'n}ski, \emph{Sur une propri\'et\'e des nombres naturels}, Ann. Mat.
  Pura Appl. (4) \textbf{39} (1955), 69--74.

\bibitem[Sri48]{srinivasan48}
A.~K. Srinivasan, \emph{Practical numbers}, Current Science \textbf{17} (1948),
  179--180.

\bibitem[Ste54]{stewart54}
B.~M. Stewart, \emph{Sums of distinct divisors}, Amer. J. Math. \textbf{76}
  (1954), 779--785.

\bibitem[Ten86]{tenenbaum86}
G.~Tenenbaum, \emph{Sur un probl\`eme de crible et ses applications}, Ann. Sci.
  \'Ecole Norm. Sup. (4) \textbf{19} (1986), no.~1, 1--30.

\bibitem[Ten95]{tenenbaum95P}
\bysame, \emph{Sur un probl\`eme de crible et ses applications. {II}.
  {C}orrigendum et \'etude du graphe divisoriel}, Ann. Sci. \'Ecole Norm. Sup.
  (4) \textbf{28} (1995), no.~2, 115--127.


\bibitem[Tho12a]{thompson11}
L.~Thompson, \emph{Polynomials with divisors of every degree}, J. Number Theory
  \textbf{132} (2012), 1038--1053.

\bibitem[Tho12b]{lolathesis}
\bysame, \emph{Products of distinct cyclotomic polynomials}, Ph.D. thesis,
  Dartmouth College, 2012.

\bibitem[Tho12c]{thompson12}
\bysame, \emph{Variations on a question concerning the degrees of divisors of
  $x^n-1$}, in preparation.

\bibitem[Tho12d]{thompson11P}
\bysame, \emph{On the divisors of $x^n-1$ in $\mathbf{F}_p[x]$}, in
  preparation; preliminary e-print available at
  \url{http://arxiv.org/abs/1111.5403v1}.


\end{thebibliography}
\end{document}